\newtheorem{maintheorem}{Theorem}
\newtheorem{maincor}{Corollary}
\newtheorem{theorem}{Theorem}[section]
\newtheorem{proposition}[theorem]{Proposition}
\newtheorem{lemma}[theorem]{Lemma}
\newtheorem{corollary}[theorem]{Corollary}
\theoremstyle{definition}
\newtheorem{definition}[theorem]{Definition}
\theoremstyle{remark}
\newtheorem{remark}[theorem]{Remark}
\newtheorem{problem}[theorem]{Problem}
\newtheorem{example}[theorem]{Example}
\numberwithin{figure}{section}
\def\N{\mathbb{N}}
\def\e{\varepsilon}
\def\g{\gamma}
\def\a{\alpha}
\def\d{\delta}
\def\D{\Delta}
\def\G{\Gamma}
\def\C{\mathcal{C}}
\def\R{\mathbb{R}}
\def\Z{\mathbb{Z}}
\def\W{\mathcal{W}}
\newcommand{\XX}{\mathbb{X}}
\newcommand{\RR}{\mathbb{R}}
\newcommand{\Haus}{\mathcal{H}}
\newcommand{\Pack}{\mathcal{P}}
\newcommand{\dist}{\mathop\mathrm{dist}\nolimits}
\newcommand{\diam}{\mathop\mathrm{diam}\nolimits}
\newcommand{\res}{\hbox{ {\vrule height .22cm}{\leaders\hrule\hskip.2cm} }} 
\newcommand{\leaves}{\mathsf{Leaves}}
\newcommand{\tube}{\mathsf{Tube}}
\newcommand{\entrance}{\mathsf{Entrance}}
\newcommand{\exit}{\mathsf{Exit}}
\newcommand{\side}{\mathsf{Side}}
\newcommand{\Top}{\mathop\mathsf{Top}}
\newcommand{\Hold}{\mathop\mathrm{H\ddot{o}ld}\nolimits}
\newcommand{\gap}{\mathop\mathrm{gap}\nolimits}
\newcommand{\card}{\mathop\mathrm{card}\nolimits}
\numberwithin{equation}{section}
\begin{document}

\title[Geometry of measures in real dimensions]{Geometry of measures in real dimensions\\ via H\"older parameterizations}
\thanks{Badger was partially supported by NSF grants 1500382 and 1650546. Part of this work was carried out while Badger attended the long program on \emph{Harmonic Analysis} at MSRI in Spring 2017.}
\date{March 26, 2018}
\subjclass[2010]{Primary 28A75; Secondary 26A16, 30L05}
\keywords{H\"older parameterization, Assouad dimension, uniformly disconnected sets, geometry of measures, Hausdorff densities, generalized rectifiability}
\author{Matthew Badger \and Vyron Vellis}
\address{Department of Mathematics\\ University of Connecticut\\ Storrs, CT 06269-1009}
\email{matthew.badger@uconn.edu}
\address{Department of Mathematics\\ University of Connecticut\\ Storrs, CT 06269-1009}
\email{vyron.vellis@uconn.edu}

\begin{abstract}We investigate the influence that $s$-dimensional lower and upper Hausdorff densities have on the geometry of a Radon measure in $\RR^n$ when $s$ is a real number between $0$ and $n$. This topic in geometric measure theory has been extensively studied when $s$ is an integer. In this paper, we focus on the non-integer case, building upon a series of papers on $s$-sets by Mart\'in and Mattila from 1988 to 2000. When $0<s<1$, we prove that measures with almost everywhere positive lower density and finite upper density are carried by countably many \emph{bi-Lipschitz curves}. When $1\leq s<n$, we identify conditions on the lower density that ensure the measure is either carried by or singular to \emph{$(1/s)$-H\"older curves}. The latter results extend part of the recent work of Badger and Schul, which examined the case $s=1$ (Lipschitz curves) in depth. Of further interest, we introduce H\"older and bi-Lipschitz parameterization theorems for Euclidean sets with ``small'' Assouad dimension.
\end{abstract}

\maketitle

\vspace{-.2cm}

\setcounter{tocdepth}{1}
\tableofcontents

\renewcommand{\thepart}{\Roman{part}}

\section{Introduction}

The study of the measure-theoretic geometry of Euclidean sets of integral dimension was initiated by Besicovitch \cite{Bes28,Bes38} in the 1920s and 1930s. Among many  original results, Besicovitch proved that any $1$-set $E\subseteq\RR^2$ (that is, an $\Haus^1$-measurable set with $0<\Haus^1(E)<\infty$) decomposes into a regular set, $E_r$, and an irregular set, $E_{pu}$, where $$\lim_{r\downarrow 0} \frac{\Haus^1(E_r\cap B(x,r))}{2r}=1\quad\text{for $\Haus^1$-a.e. $x\in E_r$}$$ and $$\liminf_{r\downarrow 0} \frac{\Haus^1(E_{pu}\cap B(x,r))}{2r} \leq 3/4\quad\text{for $\Haus^1$-a.e. $x\in E_{pu}$}.$$ Throughout this paper, $\mathcal{H}^s$ denotes $s$-dimensional Hausdorff measure (see \S\ref{s:proof-A} below). When $s=1$, $\Haus^1$ extends the Lebesgue measure of subsets of the line to a measure of ``length" on arbitrary subsets of $\RR^n$. The result quoted above says that there is a strict gap between \emph{measure-theoretic densities} on regular and irregular sets, where the Lebesgue density theorem holds precisely for regular sets. Besicovitch established several striking characterizations of regular and irregular sets in terms of \emph{global geometry} (intersection with curves, projections onto lines) and \emph{asymptotic geometry} (existence of tangent lines) of sets. In particular, in the first category of results, Besicovitch proved that regular sets are subsets of countable unions of rectifiable curves plus a set of $\Haus^1$ measure zero, whereas irregular sets are sets which intersect any rectifiable curve in a set of $\Haus^1$ measure zero. For a modern presentation of Besicovitch's theory of 1-sets, see \cite[Chapter 3]{Falconer}.

Extensions of Besicovitch's program have now been made in three different directions: expanding the range of dimensions, broadening the class of measures, and developing quantitative analogues of the qualitative theory. The decomposition of $m$-sets in $\RR^n$ for arbitrary pairs of integers $1\leq m\leq n-1$ and the characterizations of regular and irregular sets in terms of measure-theoretic densities, global geometry, and asymptotic geometry were established over a number of years, with principal contributions by Federer \cite{Fed47}, Marstrand \cite{Marstrand61}, Mattila \cite{Mattila75}, and Preiss \cite{Preiss}. Analogues of these results for a larger class of ``absolutely continuous" measures satisfying $\mu\ll\Haus^m$ were developed by Morse and Randolph \cite{MR44}, when $m=1$, and by Preiss \cite{Preiss}, when $2\leq m\leq n-1$. Very recently, an extension of Morse and Randolph's results to arbitrary Radon measures in $\RR^n$ was completed by Badger and Schul \cite{BS3}. For related recent developments in this direction, see \cite{AT15}, \cite{ENV}, and \cite{ghinassi}. A parallel quantitative theory of Ahlfors regular $m$-sets (and further results) was developed by Jones \cite{Jones-TST} and Okikiolu \cite{OK-TST}, when $m=1$, and extensively by David and Semmes \cite{DS91,DS93}, when $1\leq m\leq n-1$. A generalization of Jones' and Okikiolu's traveling salesman theorems, which identify subsets of rectifiable curves in $\RR^n$, to a theorem identifying subsets of certain higher dimensional surfaces in $\RR^n$ has recently been furnished by Azzam and Schul \cite{AS-TST}.

In \cite{MM1988,MM1993,MM2000}, Mart\'in and Mattila initiated a Besicovitch-style study of measure-theoretic densities, global geometry, and asymptotic geometry of $s$-sets $E\subseteq \RR^n$, with $0<s<n$ not necessarily an integer.\footnote{A related investigation on the Hausdorff dimension of projections of $s$-sets onto lower dimensional subspaces was carried out earlier by Marstrand \cite{Marstrand54}.}  One important finding in \cite{MM1988} is that several geometric properties, which each characterize regular $s$-sets when $s$ is an integer, no longer describe the same classes of $s$-sets for fractional $s$.

\begin{definition}[rectifiability of $s$-sets] \label{def:s-rect} Let $1\leq m\leq n-1$ be integers and $0\leq s\leq m$. Let $E\subseteq\RR^n$ be an $s$-set (i.e.~an $\Haus^s$ measurable set with $0<\Haus^s(E)<\infty$). We say that \begin{enumerate}
\item $E$ is \emph{countably $(\Haus^s,m)$ rectifiable} if there exist countably many Lipschitz maps $f_i:\RR^m\rightarrow\RR^n$ such that f $\Haus^s(E\setminus\bigcup_i f_i(\RR^m))=0$;
\item $E$ is \emph{countably $(\Haus^s,m)$ graph rectifiable} if there are countably many $m$-dimensional Lipschitz graphs $\Gamma_i\subseteq\RR^n$ (that is, isometric copies of graphs of Lipschitz functions $g:\RR^m\rightarrow\RR^{n-m}$) such that $\Haus^s(E\setminus\bigcup_i \Gamma_i)=0$; and,
\item $E$ is \emph{countably $(\Haus^s,m)$ $C^1$ rectifiable} if there exist countably many $m$-dimensional embedded $C^1$ submanifolds $M_i\subseteq\RR^n$ such that $\Haus^s(E\setminus \bigcup_i M_i)=0$.
\end{enumerate} \end{definition}

From the definition it is immediate that $(3)\Rightarrow (2)\Rightarrow (1)$ for every $s$-set. When $s=m$, the three variations of rectifiability in Definition 1.1 are in fact equivalent and furthermore hold if and only if $E$ is regular in the sense that $$\lim_{r\downarrow 0} \frac{\Haus^m(E\cap B(x,r))}{\omega_mr^m}=1\quad\text{at $\Haus^m$-a.e.~$x\in E$},$$ where $\omega_m$ is the $m$-dimensional Hausdorff measure on the unit ball in $\RR^m$ (see \cite{Mattila}). However, Mart\'in and Mattila (see \cite[\S\S5.3 and 5.4]{MM1988}) constructed compact $s$-sets in the plane that show $(1)\not\Rightarrow(2)\not\Rightarrow (3)$ when $0<s<1$.

Another principal result from \cite{MM1988} is that $s$-sets with positive lower density are always countably $(\Haus^s,m)$ rectifiable when $s<m$. This is in stark contrast with the situation when $s\geq m$; see the discussion between Theorems \ref{t:mm-den} and \ref{t:mm-pu} below.

\begin{theorem}[{\cite[Theorem 4.1(1)]{MM1988}}]\label{t:mm88} Let $1\leq m\leq n-1$ be integers and let $s<m$. If $E\subseteq\RR^n$ is an $s$-set and $$\liminf_{r\downarrow 0} \frac{\Haus^s(E\cap B(x,r))}{r^s}>0\quad\text{at $\Haus^s$-a.e.~$x\in E$},$$ then $E$ is countably $(\Haus^s,m)$ rectifiable.\end{theorem}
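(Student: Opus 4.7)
The plan is to localize by decomposition and then construct a single Lipschitz parameterization of each piece using the dimensional gap $m-s>0$.

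\emph{Step 1 (Reduction).} For each $j,k\in\N$, set
\[
E_{j,k} := \bigl\{ x\in E : \Haus^s(E\cap B(x,r)) \geq r^s/k \text{ for all } 0<r\leq 1/j \bigr\}.
\]
These sets are $\Haus^s$-measurable, and the lower-density hypothesis gives $\Haus^s(E\setminus\bigcup_{j,k}E_{j,k})=0$. By inner regularity of $\Haus^s\res E$, each $E_{j,k}$ is exhausted by compact subsets, so it suffices to show that every compact $F\subseteq E_{j,k}$ is contained in a single Lipschitz image $f(\RR^m)$.

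\emph{Step 2 (Covering estimate).} The uniform density bound on $F$, together with a standard $5r$-covering argument, yields that every $r$-separated subset of $F$ has cardinality $\lesssim r^{-s}$ for $0<r\leq 1/j$, and more generally that any ball of radius $R$ centered in $F$ contains at most $\lesssim (R/r)^s$ points of any $r$-separated subset. Since $s<m$, these covering numbers are dwarfed by the corresponding covering numbers of $[0,1]^m$, which provides the geometric ``room'' required for the parameterization.

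\emph{Step 3 (Parameterization).} I would build $f:[0,1]^m\to\RR^n$ by a multi-scale branching construction. Fix an integer $L$ large enough that $2^{Lm}$ dominates, with room to spare, the maximum number $\lesssim 2^{Ls}$ of $2^{-\ell L}$-separated points of $F$ lying inside any ball of radius $2^{-(\ell-1)L}$; any sufficiently large $L$ works because $m>s$. For each $\ell\geq 0$ choose a maximal $2^{-\ell L}$-separated net $N_\ell\subseteq F$, and form the rooted tree $\mathcal{T}$ whose level-$\ell$ vertices are $N_\ell$, each joined to a nearest element of $N_{\ell-1}$. I would embed $\mathcal{T}$ into the $L$-adic sub-cube tree of $[0,1]^m$ by assigning each vertex $v\in N_\ell$ to a distinct $L$-adic sub-cube of sidelength $2^{-\ell L}$ nested inside the sub-cube of its parent, placing children of a common parent in mutually neighbouring sub-cubes. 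The map $f$ is then defined by nesting $f(x):=\lim_\ell v_\ell$ when the nested cubes around $x$ are assigned to vertices $v_\ell$, and by linear interpolation on the unassigned sub-cubes, whose diameters decay geometrically.

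The main obstacle is the combinatorial placement step: arranging the embedded tree so that geometrically nearby sub-cubes in $[0,1]^m$ are sent to geometrically nearby points in $\RR^n$, which is what ultimately yields \emph{global} Lipschitz control for $f$ rather than merely scale-by-scale control. The hypothesis $s<m$ enters essentially here, because the $2^{Lm}-O(2^{Ls})$ unused sub-cubes per parent serve as ``bridges'' linking adjacent branches; without this dimensional slack, one could not match neighbours at consecutive scales and the Lipschitz bound would fail. Once the embedding is arranged so that adjacent $L$-adic cubes at every level map to points at distance $\lesssim 2^{-\ell L}$, a standard telescoping argument delivers $\Lip(f)<\infty$ with $F\subseteq f([0,1]^m)$.
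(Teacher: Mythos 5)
Your overall route parallels the paper's: the paper cites Theorem~\ref{t:mm88} from Mart\'in--Mattila but also derives the stronger Theorem~\ref{t:HBC} from Theorems~\ref{thm:B}--\ref{thm:C}, with the chain being two-sided density bounds $\Rightarrow$ Assouad dimension at most $s<m$ (Lemma~\ref{lem:1}) $\Rightarrow$ containment in a Lipschitz image of $\RR^m$ (Theorem~\ref{thm:param}). Your Steps~1--2 essentially compute the same Assouad bound, and your Step~3 attempts a direct version of the parameterization. However, the ``main obstacle'' you single out in Step~3 is self-imposed, and the paper's proof of Theorem~\ref{thm:param} via Proposition~\ref{prop:n-1} is designed precisely to avoid it. There, the map is constructed only on a totally disconnected Cantor-type set $E'\subseteq[0,1]^m$, consisting of sub-cubes nested well inside each parent cube with mutual gaps comparable to their side lengths; the H\"older (here Lipschitz) estimate is checked by comparing any two points of $E'$ at the largest scale at which they still share a nested cube, and the resulting partial map is then extended to all of $\RR^m$ by McShane's extension theorem applied coordinate-wise. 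No matching of adjacent $L$-adic cubes in the domain is ever required. Once you define $f$ only on the Cantor set and invoke McShane, the combinatorial placement problem you worry about evaporates; it is the attempt to build a globally continuous interpolation by hand that manufactures the apparent difficulty.

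Two smaller issues. First, the covering estimate in Step~2 --- that a ball of radius $R$ centered in $F$ meets any $r$-separated set in at most $\lesssim(R/r)^s$ points --- uses an upper density bound $\Haus^s(E\cap B(x,R))\lesssim R^s$ in addition to the lower bound; for an $s$-set this holds $\Haus^s$-a.e.\ since the upper density is at most $2^s$, but your decomposition into sets $E_{j,k}$ encodes only the lower bound and must be adjusted to include an upper one, exactly as the paper requires two-sided bounds in Lemma~\ref{lem:1}. Second, the tree you form by joining each point of the net $N_\ell$ to a \emph{nearest} point of $N_{\ell-1}$ need not have every $x\in F$ as a leaf: the nearest element of $N_\ell$ to $x$ need not have the nearest element of $N_{\ell-1}$ to $x$ as its tree parent, so the sequence of nearest net points to $x$ may fail to be a branch. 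The paper sidesteps this by using a genuine nested decomposition of the target (the cubes $M_w$ in Proposition~\ref{prop:n-1}), so that every point of $E$ determines a unique infinite branch.
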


In \cite{MM1993,MM2000}, Mart\'in and Mattila explore a notion of fractional rectifiability based on images of H\"older continuous maps. Recall that a map $f:A\rightarrow \RR^n$ defined over $A\subseteq\RR^m$ is \emph{$(1/\gamma)$-H\"older} for some $1\leq \gamma<\infty$ if $$\Hold_{1/\gamma} f:= \sup_{\stackrel{x,y\in A}{x\neq y}}\frac{|f(x)-f(y)|}{|x-y|^{1/\gamma}}<\infty.$$ Every $(1/\gamma)$-H\"older map defined over $A\subseteq\RR^m$ admits an extension to a $(1/\gamma)$-H\"older map defined over $\RR^m$, and trivially, restrictions of $(1/\gamma)$-H\"older maps are $(1/\gamma)$-H\"older. It is also well known that a $(1/\gamma)$-H\"older map does not increase Hausdorff dimension of a set by more than a factor of $\gamma$, and moreover, $$\Haus^{\gamma t}(f(B))\lesssim \Haus^t(B)\quad\text{for all $t\geq 0$ and $B\subseteq A$,}$$ where the implicit constant depends only on $(\Hold_{1/\gamma} f)^t$ and the normalization used in the definition of the Hausdorff measures $\Haus^t$ and $\Haus^{\gamma t}$. In particular, for any integers $1\leq m\leq n-1$ and $s\in[m,n]$, images of $(m/s)$-H\"older maps $f:[0,1]^m\rightarrow\RR^n$---which to shorten terminology,  we call \emph{$(m/s)$-H\"older $m$-cubes}---are connected, compact sets with finite $\Haus^s$ measure. Where Besicovitch used rectifiable curves (images of Lipschitz maps $f:[0,1]\rightarrow\RR^n$) as a basis for studying the structure of 1-sets, Mart\'in and Mattila use $(m/s)$-H\"older $m$-cubes as a basis to examine the structure of $s$-sets when $s\in[m,n]$.

From general considerations (see Appendix \ref{a:A}), it follows that for every $s$-set $E\subseteq\RR^n$, with $s\in[m,n]$, we can write $E$ as $$E=E_{m\rightarrow s}\cup E_{m\rightarrow s}^\perp, \quad \text{with } \Haus^s(E_{m\rightarrow s}\cap E_{m\rightarrow s}^\perp)=0,$$ where \begin{itemize}
\item $E_{m\rightarrow s}$ is \emph{countably $(\Haus^s,m\rightarrow s)$ rectifiable} in the sense that $E_{m\rightarrow s}$ is covered up to a set of $\Haus^s$ measure zero by countably many $(m/s)$-H\"older $m$-cubes, and
\item $E_{m\rightarrow s}^\perp$ is \emph{purely $(\Haus^s,m\rightarrow s)$ unrectifiable} in the sense that $E_{m\rightarrow s}^\perp$ intersects any $(m/s)$-H\"older $m$-cube in a set of $\Haus^s$ measure zero.
\end{itemize} The decomposition of $E$ into its countably $(\Haus^s,m\rightarrow s)$ rectifiable and purely $(\Haus^s,m\rightarrow s)$ unrectifiable parts is unique up to redefinition of the parts on sets of $\Haus^s$ measure zero. Note that, when $s=m$, the countably $(\Haus^m,m\rightarrow m)$ rectifiable $m$-sets are precisely the countably $(\Haus^m,m)$ rectifiable $m$-sets, which are by now well understood (e.g.~see \cite{Mattila}). It is an open problem to characterize countably $(\Haus^s,m\rightarrow s)$ rectifiable $s$-sets in terms of projections, measure-theoretic densities, and/or asymptotic geometry when $s>m$.

\begin{figure}\begin{center}\includegraphics[width=.58\textwidth]{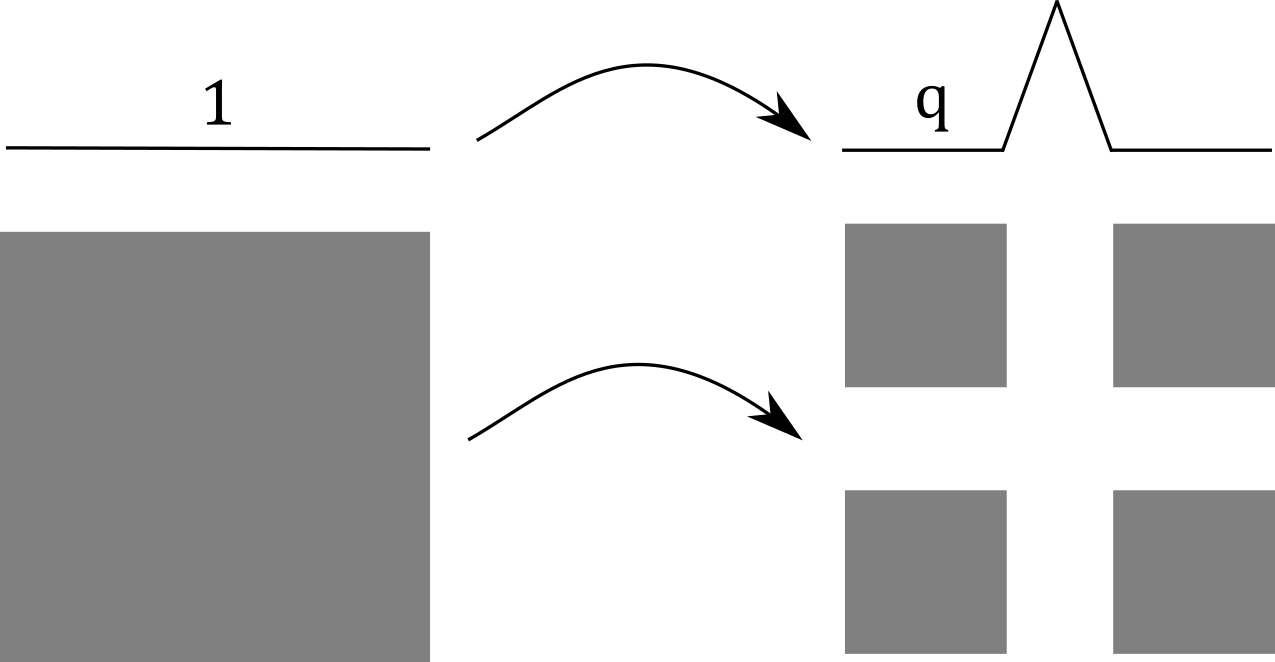}\end{center}
\caption{Generators for a snowflake curve and  four-corner Cantor set of Hausdorff dimension $s$, where $q=4^{-1/s}$.}\label{fig:snow}
\end{figure}

\begin{example}[snowflake curves] Let $\Gamma_s\subseteq\RR^2$ be a self-similar snowflake curve of Hausdorff dimension $1<s<2$ (see Figure \ref{fig:snow}). Then there exists a $(1/s)$-H\"older homeomorphism $[0,1]\rightarrow\Gamma_s$ (e.g. see \cite[Chapter VII, \S2]{ss-reals}). In particular, $\Gamma_s$ is $(\Haus^s,1\rightarrow s)$ rectifiable. \end{example}

\begin{example}[space-filling curves] The existence of H\"older space-filling (Peano) curves is well known. For example, see \cite[Chapter VII, \S3]{ss-reals} for a friendly exposition of the construction of a $(1/2)$-H\"older surjection $[0,1]\twoheadrightarrow[0,1]^2$. More generally, there exist $(j/k)$-H\"older surjections $[0,1]^j\twoheadrightarrow[0,1]^k$ for any pair of integer dimensions $1\leq j\leq k$; this follows by taking scaling limits of Stong's $(j/k)$-H\"older bijections $\Z^j\rightarrow\Z^k$ from \cite{stong} (for an outline of the argument, see \cite[\S9.1]{semmes-buffalo}). Thus, in the language above, a $k$-dimensional cube $[0,1]^k\times\{0\}^{n-k}$ is $(\Haus^k,j\rightarrow k)$ rectifiable for all $1\leq j\leq k\leq n$. Precomposing Lipschitz maps $g:[0,1]^k\rightarrow \RR^n$ with the space-filling map $[0,1]^j\twoheadrightarrow[0,1]^k$, one sees that every countably $(\Haus^k,k)$ rectifiable set $E\subseteq\RR^n$ is countably $(\Haus^k,j\rightarrow k)$ rectifiable whenever $1\leq j\leq k$.
\end{example}

The following theorem from \cite{MM1993} provides a necessary condition on the lower density for an $s$-set to be countably $(\Haus^s,m\rightarrow s)$ rectifiable.

\begin{theorem}[{\cite[Theorem 3.2]{MM1993}}] \label{t:mm-den} Let $1\leq m\leq n-1$ be integers and let $s\in[m,n]$, let $A\subseteq\RR^m$ be $\Haus^m$ measurable, and let $f:A\rightarrow\RR^n$ be $(m/s)$-H\"older. If $E\subseteq f(A)$ is $\Haus^s$ measurable, then $$\liminf_{r\downarrow 0}\frac{\Haus^s(E\cap B(x,r))}{r^s}>0\quad\text{for $\Haus^s$-a.e. $x\in E$}.$$\end{theorem}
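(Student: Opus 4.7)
The strategy is to compare $\Haus^s|_E$ to the pushforward Radon measure $\nu:=f_*(\Haus^m|_{A^*})$, where $A^*:=A\cap f^{-1}(E)$, and to deduce the positive lower $s$-density of $\Haus^s|_E$ from the corresponding density of $\nu$ via Radon--Nikodym and Lebesgue differentiation. After a standard $\sigma$-finiteness reduction, we may assume $A$ is bounded, so that $\Haus^m(A^*)\le \Haus^m(A)<\infty$ and $\nu$ is a finite Radon measure on $\RR^n$. Since $E\subseteq f(A)$ forces $f(A^*)=E$, applying the H\"older estimate $\Haus^s(f(X))\le C_0\,\Haus^m(X)$, with $C_0=(\Hold_{m/s}f)^s$ up to a dimensional constant, to $X=A^*\cap f^{-1}(B)$ gives
\[
\Haus^s(E\cap B)\le C_0\,\nu(B)\qquad\text{for every Borel }B\subseteq\RR^n,
\]
so $\Haus^s|_E\le C_0\,\nu$ as measures, and in particular $\Haus^s|_E\ll\nu$.

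Next I would lower-bound $\nu$ near points $x=f(y)$ with $y\in A^*$. Writing $H:=\Hold_{m/s}f$, the H\"older inequality $|f(y)-f(z)|\le H|y-z|^{m/s}$ yields $B(y,(r/H)^{s/m})\cap A^*\subseteq A^*\cap f^{-1}(B(f(y),r))$, hence
\[
\nu(B(f(y),r))\ge \Haus^m\bigl(A^*\cap B(y,(r/H)^{s/m})\bigr).
\]
Let $Y\subseteq A^*$ be the set of Lebesgue density points of $A^*$ in $\RR^m$; by the Lebesgue density theorem $\Haus^m(A^*\setminus Y)=0$, and the H\"older estimate then gives $\Haus^s(E\setminus f(Y))\le C_0\Haus^m(A^*\setminus Y)=0$. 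For $y\in Y$ and all sufficiently small $r$, one has $\Haus^m(A^*\cap B(y,(r/H)^{s/m}))\ge\tfrac12\omega_m(r/H)^s$, so
\[
\liminf_{r\downarrow 0}\frac{\nu(B(x,r))}{r^s}\ge \frac{\omega_m}{2H^s}>0 \quad\text{at }\Haus^s\text{-a.e.\ }x\in E.
\]

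Finally I would transfer this lower bound to $\Haus^s|_E$. By Radon--Nikodym, write $\Haus^s|_E=g\,\nu$ with $0\le g\le C_0$; then $g>0$ at $\Haus^s$-a.e.\ $x\in E$, since $\Haus^s(E\cap\{g=0\})=\int_{\{g=0\}}g\,d\nu=0$. Lebesgue's differentiation theorem for the Radon measure $\nu$ on $\RR^n$ (a consequence of the Besicovitch covering theorem) gives $\Haus^s(E\cap B(x,r))/\nu(B(x,r))\to g(x)$ as $r\downarrow 0$ at $\nu$-a.e., and therefore $\Haus^s|_E$-a.e., $x\in E$. Combining all three almost-everywhere statements,
\[
\liminf_{r\downarrow 0}\frac{\Haus^s(E\cap B(x,r))}{r^s}\ge g(x)\cdot\frac{\omega_m}{2H^s}>0\quad\text{for }\Haus^s\text{-a.e.\ }x\in E,
\]
as required. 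The main conceptual obstacle is that the H\"older hypothesis only supplies the upper comparison $\Haus^s|_E\le C_0\nu$, not a lower one, so one cannot pass directly from positive lower density of $\nu$ to positive lower density of $\Haus^s|_E$; the two-sided information at $\Haus^s|_E$-a.e.\ point is precisely what the Radon--Nikodym decomposition together with Lebesgue differentiation supplies.
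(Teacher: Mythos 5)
Your proof is correct, and it takes a genuinely different route from the one this paper uses. The paper does not reprove Mart\'in and Mattila's theorem directly; instead, it proves a generalization to arbitrary Radon measures (Lemma~\ref{l:pu}, via Corollary~\ref{l:LD}) using packing measures: $(m/s)$-H\"older maps send sets of finite $\Pack^m$ to sets of finite $\Pack^s$ (Lemma~\ref{l:lpack}), and any set of finite $\Pack^s$ measure can only carry a finite Borel measure where the lower $s$-density is positive, by the packing-density comparison in Lemma~\ref{l:bounds}. Your argument replaces this with the pushforward measure $\nu=f_*(\Haus^m\res A^*)$, the upper comparison $\Haus^s\res E\le C_0\,\nu$ from the H\"older estimate, a Lebesgue-density argument in the \emph{domain} showing $\nu$ has positive lower $s$-density at $\Haus^s$-a.e.\ point of $E$, and then Radon--Nikodym plus Besicovitch differentiation to transfer the lower bound to $\Haus^s\res E$. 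The packing-measure route is terser and strictly more general---it applies to any Radon measure $\mu$, not only $\mu=\Haus^s\res E$ with $E$ inside a H\"older image---which is exactly what Theorem~\ref{thm:A} requires; your route buys elementary tools only (Lebesgue/Besicovitch differentiation), and the domain-side density trick is a clean way to see where the positivity comes from, but it exploits that $\Haus^s\res E$ is the image measure of a Lebesgue-density-regular measure and so does not directly extend to the general Radon setting. The only small point to make explicit in a polished write-up is the measurability bookkeeping: you should reduce to $A$ bounded (hence $\Haus^m(A)<\infty$), replace $E$ by a Borel subset of full $\Haus^s\res E$-measure (possible since $\Haus^s(E)\le(\Hold_{m/s}f)^s\Haus^m(A)<\infty$) so that $A^*=A\cap f^{-1}(E)$ is $\Haus^m$ measurable and $\nu$ is a finite Radon measure, and note that enlarging $E$ back only helps the lower bound $\liminf_{r\downarrow 0}r^{-s}\Haus^s(E\cap B(x,r))$.
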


When $s=m$, it is well known that not all $m$-sets with positive lower $m$-density are countably $(\Haus^m,m)$ rectifiable. For example, in \cite[\S5.4]{hutchinson}, Hutchinson proved that $m$-dimensional self-similar sets $K$ with disjoint parts have positive lower density\footnote{In fact, $K$ is Ahlfors regular in the sense that $\Haus^m(K\cap B(x,r))\sim r^m$ when $x\in K$ and $0<r\leq \diam K$, because $K$ is self-similar.}, but intersect images of Lipschitz maps $f:\RR^m\rightarrow\RR^n$ in sets of $\Haus^m$ measure zero. In \cite{MM1993},
Mart\'in and Mattila confirm that this behavior persists for self-similar $s$-sets when $s>m$. Also see \cite{MM2000}, where Theorem \ref{t:mm-pu} is extended to more general sets, including cylinders $K\times \RR^k$, where $K$ is a self-similar Cantor set.

\begin{theorem}[{\cite[Corollary 3.5]{MM1993}}] \label{t:mm-pu} Let $1\leq m\leq n-1$ be integers and let $s\in[m,n]$. Let $K$ be a compact self-similar subset of $\RR^n$, $K=\bigcup_{i=1}^N S_i(K)$, such that the different parts $S_i(K)$ are disjoint. If $A\subseteq\RR^m$ is $\Haus^m$ measurable and $f:A\rightarrow\RR^n$ is $(m/s)$-H\"older, then $$\Haus^s(K\cap f(A))=0.$$\end{theorem}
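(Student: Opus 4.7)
The plan is to combine the multi-scale gap structure of a self-similar set with strong separation with the H\"older restriction on the parameterization. Since every $(m/s)$-H\"older map on $A$ extends to $\RR^m$ and the conclusion is preserved under countable unions, I first reduce to the case where $A\subseteq\RR^m$ is a bounded $\Haus^m$-measurable set and $f:\RR^m\to\RR^n$ is $(m/s)$-H\"older with constant $L=\Hold_{m/s}f$. Writing $K_w=S_{w_1}\circ\cdots\circ S_{w_k}(K)$ for $w\in\{1,\ldots,N\}^k$, strong separation together with compactness of $K$ furnishes $\delta>0$ with $\dist(S_i(K),S_j(K))\geq\delta$ whenever $i\neq j$. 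Iterating along the similarities, at every level $k$ the cylinders $\{K_w:|w|=k\}$ have diameters comparable to $r_{\max}^k\diam K$ and are pairwise separated by at least a constant multiple of $r_{\min}^{k-1}$.

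The key deterministic step is to match this separation scale with the H\"older oscillation. The estimate $f(B(u,\rho))\subseteq B(f(u),L\rho^{m/s})$ forces $f(B(u,\rho))\cap K$ to lie in the single level-$k$ cylinder $K_{w(f(u),k)}$ provided we choose $\rho=\rho_k$ so that $L\rho_k^{m/s}$ is strictly smaller than the level-$k$ gap; concretely, $\rho_k\sim r_{\min}^{ks/m}$. Consequently, whenever $u,v\in A\cap f^{-1}(K)$ map into distinct level-$k$ cylinders we have $|u-v|\geq\rho_k$. Picking one preimage $u_w$ for each level-$k$ cylinder that meets $f(A)$ produces a $\rho_k$-separated subset of $A$, so the number $M_k$ of such cylinders is at most $C\Haus^m(A)\rho_k^{-m}$. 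Combining this with $\Haus^s(K_w)\sim r_{\min}^{ks}$ and $\rho_k^m\sim r_{\min}^{ks}$ yields the finite bound $\Haus^s(K\cap f(A))\lesssim L^s\Haus^m(A)\Haus^s(K)$.

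The main obstacle is sharpening this $O(1)$ estimate to the claimed vanishing. The plan is to argue by contradiction: assume $\Haus^s(E)>0$ for $E:=K\cap f(A)$, and invoke Theorem~\ref{t:mm-den} together with the Ahlfors regularity of $K$. By Lebesgue differentiation with respect to $\Haus^s|_K$, at $\Haus^s$-a.e.~$x\in E$ the ratio $\Haus^s(E\cap B(x,r))/\Haus^s(K\cap B(x,r))$ tends to $1$ while $\Haus^s(K\cap B(x,r))\sim r^s$. Zooming in at such an $x$ via the self-similar renormalization $\tilde f_w:=S_w^{-1}\circ f\circ(r_w^{s/m}\,\cdot\,)$ at $w=w(x,k)$ produces $(m/s)$-H\"older maps with the \emph{same} constant $L$, defined on the dilated domain $r_w^{-s/m}A$, whose images saturate $K$ in $\Haus^s$-measure as $k\to\infty$. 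Applied to these renormalized maps, the previous $\rho_k$-separation argument would require a $\rho_0$-separated packing of cardinality $\sim N^k$ inside a bounded subset of $\RR^m$, which is impossible; this forces $\Haus^s(K\cap f(A))=0$.
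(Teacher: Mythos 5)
The paper does not prove this statement; Theorem~\ref{t:mm-pu} is quoted directly from Mart\'in--Mattila \cite[Corollary~3.5]{MM1993} as an external ingredient, so there is no internal proof to compare against. Evaluating your proposal on its own terms, the first two paragraphs are a reasonable packing computation, though it only delivers the finite bound you acknowledge. (Two small slips there: the diameters of the level-$k$ cylinders are $r_w\diam K$ with $r_{\min}^k\leq r_w\leq r_{\max}^k$, not all ``comparable to $r_{\max}^k\diam K$''; and the cardinality of a $\rho_k$-separated subset of a bounded $A\subseteq\RR^m$ is controlled by $(\diam A/\rho_k)^m$, not by $\Haus^m(A)\rho_k^{-m}$ --- $A$ could well have $\Haus^m(A)=0$ and still contain arbitrarily large finite $\rho_k$-separated subsets.)

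The real problem is the contradiction step, which does not close. You renormalize to $\tilde f_w = S_w^{-1}\circ f\circ(r_w^{s/m}\,\cdot\,)$, correctly observing this keeps the H\"older constant $L$, but the domain becomes $r_w^{-s/m}A$, whose diameter grows like $r_w^{-s/m}\diam A\to\infty$ as $k\to\infty$. Your final sentence asserts the ensuing packing must live ``inside a bounded subset of $\RR^m$,'' which is exactly what fails. If you rerun the separation count on the renormalized picture, you need $\gtrsim N^j$ points at separation $\sim(r_w r_{\min}^{j-1}\delta/L)^{s/m}$ inside a ball of radius $r_w^{-s/m}\diam A$; plugging in $N\sim r_{\min}^{-s}$, both sides scale identically in $j$, and the residual inequality is $1\lesssim L^s(\diam A)^m r_w^{-s}\delta^{-s}$, which becomes \emph{weaker}, not stronger, as $r_w\to 0$. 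So the density/renormalization argument yields no contradiction; it merely reproduces the $O(1)$ estimate at every scale. The obstruction you have not addressed is that the preimage $f^{-1}(K_w)\cap A$ need not be localized: the H\"older hypothesis bounds oscillation of $f$ from above but does not prevent points scattered across all of $A$ from mapping into a single tiny cylinder $K_w$, so ``zooming in on $K$'' does not zoom in on the domain. Some genuinely new input about how the self-similar gap structure interacts with the parameterization at all scales simultaneously (which is what Mart\'in--Mattila supply via their Theorem~3.4) is needed, and the proposal does not supply it.
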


In this paper, we redevelop Mart\'in and Mattila's framework in the general setting of Radon measures in $\RR^n$. In particular, we investigate the connection between lower and upper Hausdorff densities of a measure and its interaction with $(m/s)$-H\"older $m$-cubes. The main results and methods will be described momentarily. Before continuing, we first record an extension of Theorem \ref{t:mm88}, which follows from Theorems B and C.

\begin{theorem}\label{t:HBC} Let $1\leq m\leq n-1$ be integers, let $s\in[m,n]$, and let $t\in[0,s)$. If $E\subseteq\RR^n$ is a $t$-set and $$\liminf_{r\downarrow 0} \frac{\Haus^t(E\cap B(x,r))}{r^t}>0\quad\text{at $\Haus^t$-a.e.~$x\in E$},$$ then $E$ is countably $(\Haus^t,m\rightarrow s)$ rectifiable, i.e.~ there exist countably many $(m/s)$-H\"older $m$-cubes $\Gamma_i$ such that $\Haus^t(E\setminus \bigcup_i \Gamma_i)=0$.

Moreover, if $t\in[0,1)$, then $E$ is countably $(\Haus^t,1)$ bi-Lipschitz rectifiable, i.e.~ there exist countably many bi-Lipschitz embeddings $f_i:[0,1]\rightarrow\RR^n$ such that $\Haus^t(E\setminus \bigcup_i \Gamma_i)=0$. \end{theorem}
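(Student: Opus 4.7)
The plan is to apply Theorems B and C to the finite Radon measure $\mu:=\Haus^t\res E$, and then promote the curve coverings they produce to $(m/s)$-H\"older $m$-cube coverings by precomposing with the Lipschitz projection $\sigma(x_1,\dots,x_m)=x_1$.

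First I verify the density hypotheses. Since $E$ is a $t$-set, $\mu$ is a finite Radon measure, and the standard Hausdorff density upper bound gives $\uD{t}(\mu,x)<\infty$ at $\Haus^t$-a.e.~$x\in E$. Together with the positive lower $t$-density hypothesis, this places $\mu$ squarely within the scope of Theorems B and C.

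When $t\in[0,1)$, Theorem B produces countably many bi-Lipschitz embeddings $f_i:[0,1]\to\RR^n$ with $\Haus^t(E\setminus\bigcup_i f_i([0,1]))=0$, yielding the ``moreover'' assertion directly. For the main assertion in this same case, set $F_i:[0,1]^m\to\RR^n$ by $F_i(x_1,\dots,x_m):=f_i(x_1)$. Each $F_i$ is Lipschitz on the bounded cube $[0,1]^m$, and because $m/s\le 1$, it is automatically $(m/s)$-H\"older with a comparable constant; since $F_i([0,1]^m)=f_i([0,1])$, the $F_i$'s form the required $m$-cube cover.

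For $t\in[1,s)$, I would turn to Theorem C. Pick a parameter $s'\in[t,s)$ with $s'\le s/m$ (possible whenever $mt\le s$), observe that positive lower $t$-density forces positive (in fact infinite) lower $s'$-density when $s'>t$, apply Theorem C at dimension $s'$ to cover $\mu$-a.e.~point by $(1/s')$-H\"older curves $\gamma_i$, and then run the same projection trick $F_i:=\gamma_i\circ\sigma$ to obtain
\[
|F_i(x)-F_i(y)|\le L|x_1-y_1|^{1/s'}\le L|x-y|^{1/s'}\lesssim|x-y|^{m/s}
\]
on $[0,1]^m$. In the residual regime $mt>s$ (so $m\ge 2$): if $t<m$, Theorem \ref{t:mm88} already gives a Lipschitz, hence $(m/s)$-H\"older, $m$-cube cover; if instead $t\ge m$, one combines Theorem C's $(1/s)$-H\"older curves with the H\"older parameterization of sets of small Assouad dimension advertised in the abstract.

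The principal obstacle I expect lies in this last sub-case $t\ge m$ with $mt>s$: a $(1/s)$-H\"older curve cannot be lifted to an $(m/s)$-H\"older $m$-cube by any Lipschitz precomposition, since a factorization $\gamma\circ\phi$ with $\gamma$ merely $(1/s)$-H\"older would require $\phi:[0,1]^m\to[0,1]$ to itself be $m$-H\"older for the composition to be $(m/s)$-H\"older, and for $m\ge 2$ such a $\phi$ must be constant. This is precisely where the paper's Assouad-dimension-based parameterization theorems must take over.
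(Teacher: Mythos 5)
The main miscue is that you have read Theorem~\ref{thm:C} as a curve-producing statement (the case $m=1$), when it already asserts, for every $1\le m\le n-1$, that $\mu^t_+$ is carried by $(m/s)$-H\"older $m$-cubes. Once you check, as you correctly do, that the finite Radon measure $\mu:=\Haus^t\res E$ satisfies $\mu=\mu^t_+$ up to a $\mu$-null set --- the standard upper-density bound $\limsup_{r\downarrow 0}r^{-t}\Haus^t(E\cap B(x,r))\le C(t)<\infty$ at $\Haus^t$-a.e.~$x\in E$ furnishes one side, the hypothesis the other --- a single application of Theorem~\ref{thm:C} with the given $m$, $s$, $t$ yields the $(m/s)$-H\"older $m$-cube cover for every $s\in[m,n)$, with no Lipschitz projection $\sigma$, no auxiliary exponent $s'$, and no case split on the sign of $mt-s$. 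The ``moreover'' clause is, as you say, a direct application of Theorem~\ref{thm:B}. The only endpoint needing separate mention is $s=n$, which falls outside the hypotheses of Theorem~\ref{thm:C}: there one covers the $\sigma$-bounded set $E$ by countably many $n$-cubes and invokes the $(m/n)$-H\"older space-filling surjections $[0,1]^m\twoheadrightarrow[0,1]^n$ recalled in the introduction.

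The obstacle you flag at the end --- $m\ge 2$, $t\ge m$, $mt\ge s$, where a $(1/s)$-H\"older curve cannot be promoted to an $(m/s)$-H\"older $m$-cube by Lipschitz precomposition --- is genuine, but self-inflicted, and it disappears as soon as Theorem~\ref{thm:C} is applied at the correct $m$. (Your ``$mt\le s$'' branch also has a boundary problem: Theorem~\ref{thm:C} with parameters $m'=1$, $s'$, $t$ requires $t<s'$, so you need $s'\in(t,s/m]$, which forces the strict inequality $mt<s$; the case $mt=s$ is still uncovered. The observation about infinite lower $s'$-density is likewise a red herring, since the application of Theorem~\ref{thm:C} continues to use the set $\mu^t_+$ defined by the $t$-densities.) The Assouad-dimension machinery you gesture at at the end --- Theorem~\ref{thm:param} via Corollary~\ref{cor:1} --- is indeed what drives the proof of Theorem~\ref{thm:C} for general $m$, but for the purpose of proving Theorem~\ref{t:HBC} the paper simply cites the finished Theorem~\ref{thm:C} rather than re-deriving it from the $m=1$ case.
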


\begin{example}[$2^n$-corner Cantor sets] Let $E_t\subseteq[0,1]^n$ be the self-similar $2^n$-corner Cantor set of Hausdorff dimension $0<t<n$, which is obtained via similarities that dilate the unit cube by a factor $q=2^{-n/t}$ (see Figure \ref{fig:snow} for the case $n=2$). \begin{itemize}
\item If $1\leq m\leq n-1$ and $m\leq t<m+1$, then $E_t$ is purely $(\Haus^t,m\rightarrow t)$ unrectifiable by Theorem \ref{t:mm-pu}.
\item If $1\leq m\leq n-1$ and $m\leq t<m+1$, then $E_t$ is countably $(\Haus^t,m\rightarrow s)$ rectifiable for all $s>t$ by Theorem \ref{t:HBC}.
\item If $t<1$, then $E_t$ is countably $(\Haus^t,1)$ bi-Lipschitz rectifiable by Theorem \ref{t:HBC}.
\end{itemize}
\end{example}

\subsection{Main results and organization of the paper}

Let $1\leq m\leq n-1$ be integers and let $s\in[m,n]$. Recall that we define a \emph{$(m/s)$-H\"older $m$-cube} to be the image of a H\"older continuous map $f:[0,1]^m\rightarrow\RR^n$ with exponent $(m/s)$. When $m=1$, we refer to a $(1/s)$-H\"older $1$-cube as a \emph{$(1/s)$-H\"older curve}. By Proposition \ref{p:decomp}, every Radon measure (that is, a locally finite Borel regular outer measure) $\mu$ on $\RR^n$ can be written uniquely as $$\mu=\mu_{m\rightarrow s} + \mu_{m\rightarrow s}^\perp,$$ where \begin{itemize}
\item $\mu_{m\rightarrow s}$ is \emph{carried by $(m/s)$-H\"older $m$-cubes} in the sense that there exist countably many $(m/s)$-H\"older $m$-cubes $\Gamma_i\subseteq\RR^n$ such that $\mu_{m\rightarrow s}(\RR^n\setminus \bigcup_i \Gamma_i)=0$, and
\item $\mu_{m\rightarrow s}^\perp$ is \emph{singular to $(m/s)$-H\"older $m$-cubes} in the sense that $\mu(\Gamma)=0$ for every $(m/s)$-H\"older $m$-cube $\Gamma\subseteq\RR^n$.
\end{itemize} When $m=s=1$, Badger and Schul \cite[Theorem A]{BS3} gave a full characterization of the 1-rectifiable $\mu_{1\rightarrow 1}$ and purely 1-unrectifiable $\mu_{1\rightarrow 1}^\perp$ parts of a Radon measure. When $s=n$, the existence of space-filling curves implies that  for every Radon measure $\mu$ on $\RR^n$, $\mu=\mu_{m\rightarrow n}$ for all $1\leq m\leq n-1$. No other case of the following fundamental problem in geometric measure theory has been completely solved. When $s=m$ is an integer \emph{and the measure a priori satisfies $\mu\ll\Haus^m$}, several solutions to Problem \ref{p:ident} have been given by Federer \cite{Fed47}, Preiss \cite{Preiss}, Azzam and Tolsa \cite{AT15}, and Tolsa and Toro \cite{TT-rect}.

\begin{problem}[identification problem] \label{p:ident} Let $1\leq m\leq n-1$ be integers and let $s\in[m,n]$. Find geometric or measure-theoretic characterizations of being carried by or singular to $(m/s)$-H\"older $m$-cubes that identify  $\mu_{m\rightarrow s}$ and $\mu_{m\rightarrow s}^\perp$ for every Radon measures $\mu$ on $\RR^n$.\end{problem}

In our principal result, we show that extreme behavior of the lower $s$-density is sufficient to detect that a measure is carried by or singular to $(1/s)$-H\"older curves. In the statement, $\mu\res E$ denotes the restriction of the measure $\mu$ to the set $E\subseteq\RR^n$, defined by the rule $$\mu\res E(F)=\mu(E\cap F)\quad\text{for all $F\subseteq\RR^n$}.$$

\begin{maintheorem}[Behavior at extreme lower densities] \label{thm:A} Let $\mu$ be a Radon measure on $\RR^n$ and let $s\in[1,n)$. Then  the measure \begin{equation}\underline{\mu}^s_{\,0}:=\mu\res\left\{x\in\RR^n: \liminf_{r\downarrow 0} \frac{\mu(B(x,r))}{r^s}=0\right\}\end{equation} is singular to $(1/s)$-H\"older curves. At the other extreme, the measure \begin{equation}\label{eq:mu-perp}\underline{\mu}^s_{\,\infty}:=\mu\res\left\{x\in\RR^n: \int_0^1 \frac{r^s}{\mu(B(x,r))}\,\frac{dr}{r}<\infty\text{ and }\limsup_{r\downarrow 0} \frac{\mu(B(x,2r))}{\mu(B(x,r))}<\infty\right\}\end{equation} is carried by $(1/s)$-H\"older curves.\end{maintheorem}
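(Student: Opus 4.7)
For the first assertion, the plan is to pull Lebesgue measure on $[0,1]$ forward through the parameterization of each $(1/s)$-H\"older curve to obtain a finite reference measure with positive lower $s$-density, and then deploy a Vitali-type covering argument. Concretely, fix a $(1/s)$-H\"older curve $\Gamma=f([0,1])$ with $H:=\Hold_{1/s}f$, and let $\nu:=f_*\mathcal{L}^1$ be the pushforward to $\RR^n$ of Lebesgue measure on $[0,1]$, a finite Radon measure supported on $\Gamma$. The H\"older inequality $|f(t)-f(t_0)|\le H|t-t_0|^{1/s}$ gives $f^{-1}(B(x,r))\supseteq\{t\in[0,1]:|t-t_0|\le(r/H)^s\}$ whenever $x=f(t_0)$, so $\nu(B(x,r))\geq(r/H)^s$ for all sufficiently small $r$, and hence $\liminf_{r\downarrow 0}\nu(B(x,r))/r^s\ge H^{-s}>0$ for every $x\in\Gamma$. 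Writing $E_0:=\{x:\liminf_{r\downarrow 0}\mu(B(x,r))/r^s=0\}$, the combination forces $\liminf_{r\downarrow 0}\mu(B(x,r))/\nu(B(x,r))=0$ at every $x\in E_0\cap\Gamma$. For any $\epsilon>0$, apply Vitali's covering theorem for the Radon measure $\mu$ to the fine family $\{B(x,r):\mu(B(x,r))\le\epsilon\,\nu(B(x,r))\}$ to extract a disjoint subcollection $\{B_i\}$ centered in $E_0\cap\Gamma$ with $\mu\bigl((E_0\cap\Gamma)\setminus\bigcup_i B_i\bigr)=0$; then
\[
\mu(E_0\cap\Gamma)\le\sum_i\mu(B_i)\le\epsilon\sum_i\nu(B_i)\le\epsilon\,\nu(\RR^n)<\infty.
\]
Letting $\epsilon\downarrow 0$ gives $\mu(E_0\cap\Gamma)=0$, proving that $\underline{\mu}^s_{\,0}$ is singular to every $(1/s)$-H\"older curve.

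The second assertion is substantially harder and demands a direct construction. The plan is to partition
\[
A:=\left\{x\in\RR^n:\int_0^1\frac{r^s}{\mu(B(x,r))}\frac{dr}{r}<\infty\text{ and }\limsup_{r\downarrow 0}\frac{\mu(B(x,2r))}{\mu(B(x,r))}<\infty\right\}
\]
into countably many bounded Borel pieces $A_{k,\ell,R}$ on which both quantities are \emph{uniformly} controlled, say $\int_0^R r^s/\mu(B(x,r))\,dr/r\le 1/k$ and $\mu(B(x,2r))\le\ell\,\mu(B(x,r))$ for every $r<R$. By countable additivity it suffices to show that each $\mu\res A_{k,\ell,R}$ is carried by $(1/s)$-H\"older curves. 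The key intermediate step is to convert these two measure-theoretic conditions---the Dini-type summability and the local doubling of $\mu$---into a \emph{geometric} statement about (a $\mu$-full subset of) $\spt(\mu\res A_{k,\ell,R})$: along a dyadic tree of cubes intersecting $A_{k,\ell,R}$, only a uniformly bounded number of children at each level can carry non-negligible $\mu$-mass, which forces an Assouad dimension estimate slightly below $s$ together with a uniformly disconnected structure at that dimension. With such an estimate in place, the H\"older parameterization theorem for Euclidean sets of small Assouad dimension---one of the main auxiliary tools promised in the abstract---supplies, for each $A_{k,\ell,R}$, a countable family of $(1/s)$-H\"older curves whose union captures $\mu\res A_{k,\ell,R}$ modulo a $\mu$-null set.

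The hard part will be precisely this analytic-to-geometric conversion, together with the multi-scale combinatorial construction of the carrying curves. Unlike the Lipschitz ($s=1$) setting of Badger and Schul \cite{BS3}---where Jones' $\beta$-numbers and finite upper density play the roles of the Dini integral and local doubling---here neither the reference geometric quantity nor the regularity of the parameterization is linear, so the familiar square-function arguments must be replaced by ones tailored to the H\"older category. The doubling hypothesis should permit controlled transitions between consecutive scales, preventing any single scale from overwhelming the H\"older modulus of the curve being built, while the Dini integral should ultimately bound the total ``H\"older size'' of the construction. Once these steps are in place, countable additivity over $(k,\ell,R)$ completes the proof.
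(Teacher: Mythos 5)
Your proof of the first assertion is correct, though it takes a genuinely different route from the paper. You push forward Lebesgue measure through the parameterization to manufacture a reference measure $\nu$ with uniformly positive lower $s$-density on $\Gamma$, and then run a Vitali covering argument. The paper instead works with packing measures: it shows that a $(1/s)$-H\"older image of $[0,1]$ has finite $\mathcal{P}^s$-measure (Lemma~\ref{l:lpack}), and then uses the standard comparison between lower $s$-density and packing measure (Lemma~\ref{l:bounds}) to conclude that $\mu$ gives zero mass to the part of $\Gamma$ where the lower density of $\mu$ vanishes. Both arguments are correct; yours is arguably more hands-on and avoids any packing-measure machinery, while the packing-measure route is slicker once Lemmas~\ref{l:bounds} and \ref{l:lpack} are in hand.

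The second assertion is where your plan has a genuine gap. You propose to convert the Dini integral plus local doubling into the statement that some $\mu$-full subset of $\spt(\mu\res A_{k,\ell,R})$ has Assouad dimension strictly less than $s$ (and is uniformly disconnected), so that Theorem~\ref{thm:param} can be applied. This conversion is false. Local density conditions at each point of $A$ cannot control the global arrangement of $A$, which is what Assouad dimension measures. Concretely, take $s=1$, fix a countable dense set $\{x_k\}\subseteq[0,1]\times\{0\}\subseteq\RR^2$, and put $\mu=\sum_k a_k\delta_{x_k}$ with $\sum a_k<\infty$. At each $x_k$ one has $\mu(B(x_k,r))\geq a_k>0$ for all $r>0$, so $\int_0^1 r^s/\mu(B(x_k,r))\,dr/r\leq 1/a_k<\infty$, and $\lim_{r\downarrow 0}\mu(B(x_k,2r))/\mu(B(x_k,r))=1$; thus $\mu=\underline{\mu}^1_{\,\infty}$. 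But any $\mu$-full subset must contain every $x_k$ and is therefore dense in $[0,1]$, and since Assouad dimension passes to closures and is monotone under inclusion, every $\mu$-full subset has Assouad dimension exactly $1=s$, not strictly less. (A non-atomic variant works just as well: replace each $\delta_{x_k}$ by Hausdorff measure on a Cantor set of diameter $2^{-k}$ and Hausdorff dimension $t\in(0,1)$ centered at $x_k$.) The conclusion of Theorem~\ref{thm:A} still holds for these measures---they are trivially carried by the line segment---but Theorem~\ref{thm:param} is simply not applicable, so your intermediate step cannot be salvaged.

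The paper's route is structurally different and avoids this obstruction entirely. After the same localization to sets $A$ with uniform Dini and doubling constants, it does \emph{not} try to control a covering number at every scale and location. Instead, Lemma~\ref{l:Ss} integrates the Dini functional $S^s(\mu,x)$ over a density subset $A'_\theta$ of $A$ and, via Tonelli, converts the finiteness of $\int_A S^s(\mu,x)\,d\mu$ into the summability $\sum_{E\in\mathcal{T}}(\diam E)^s<\infty$ for a tree $\mathcal{T}$ of balls built from maximal $2^{-k}$-nets of $A'_\theta$; the doubling hypothesis enters only in passing from the integral to the sum. Theorem~\ref{t:diam^s} (the traveling-salesman-type parameterization of the leaves of an $s$-summable tree by a single $(1/s)$-H\"older curve) then finishes. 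Summability of the tree is a much weaker and more robust geometric condition than an Assouad dimension bound: it tolerates the support being dense in a large set, which is exactly what the counterexample above produces.
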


\begin{remark} Note that Theorem \ref{thm:A} extends Theorem \ref{t:mm-den} to arbitrary Radon measures; see Corollary \ref{l:LD}. In \eqref{eq:mu-perp}, if the Dini-type condition $\int_0^1 [r^{s}/\mu(B(x,r))]\, r^{-1}dr<\infty$ holds at some $x$, then $\lim_{r\downarrow 0}r^{-s}\mu(B(x,r))=\infty$. Thus,  Theorem A identifies diverse behaviors of a measure on the points of vanishing lower density and the points of ``rapidly" infinite density. It is possible to remove the doubling condition in \eqref{eq:mu-perp} by using dyadic cubes; see Theorem \ref{t:fast}. The special case $s=1$ of Theorem A first appeared in \cite{BS1,BS2}.\end{remark}

The following corollary of Theorem \ref{thm:A} is immediate.

\begin{maincor} \label{c:B} Let $\mu$ be a Radon measure on $\RR^n$, let $s\in[1,n)$, and let $t\in[0,s)$. Then the measure \begin{equation}\mu^t_{+}:=\mu\res\left\{x\in\RR^n: 0<\liminf_{r\downarrow 0} \frac{\mu(B(x,r))}{r^t} \leq \limsup_{r\downarrow 0} \frac{\mu(B(x,r))}{r^t}<\infty\right\}\end{equation} is carried by $(1/s)$-H\"older curves.\end{maincor}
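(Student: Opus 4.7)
The plan is to apply the second (positive) assertion of Theorem~\ref{thm:A} to the restricted Radon measure $\nu := \mu\res E$, where
\[
E = \left\{x\in\RR^n: 0<\liminf_{r\downarrow 0}\frac{\mu(B(x,r))}{r^t}\le\limsup_{r\downarrow 0}\frac{\mu(B(x,r))}{r^t}<\infty\right\}
\]
is the Borel set used to define $\mu^t_+$, so that $\nu = \mu^t_+$. Once we show that $\nu$-a.e.\ $x$ lies in the set described by the brace of \eqref{eq:mu-perp} (with $\nu$ in place of $\mu$), Theorem~\ref{thm:A} yields countably many $(1/s)$-H\"older curves carrying $\nu$, which is the conclusion of the corollary.

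The mechanism that transfers the hypotheses from $\mu$ to $\nu$ is the Lebesgue--Besicovitch differentiation theorem for Radon measures on $\RR^n$ applied to the Borel set $E$: at $\mu$-a.e.\ $x\in E$,
\[
\lim_{r\downarrow 0}\frac{\nu(B(x,r))}{\mu(B(x,r))}=\lim_{r\downarrow 0}\frac{\mu(E\cap B(x,r))}{\mu(B(x,r))}=1.
\]
Combining this density identity with the two defining inequalities for $E$ yields, for each such $x$, constants $c_x,C_x>0$ and a radius $r_x>0$ with
\[
c_x\, r^t \;\le\; \nu(B(x,r)) \;\le\; C_x\, r^t \quad\text{for all } 0<r<r_x.
\]

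From this two-sided comparison, both conditions in \eqref{eq:mu-perp} are elementary to verify for $\nu$. The doubling bound $\nu(B(x,2r))/\nu(B(x,r))\le (C_x/c_x)\cdot 2^t$ is immediate for $r<r_x/2$. For the Dini-type integral, $r^s/\nu(B(x,r))\le c_x^{-1}\, r^{s-t}$ on $(0,r_x)$, and $\int_0^{r_x} r^{s-t-1}\,dr<\infty$ precisely because the hypothesis $t<s$ forces the exponent $s-t-1>-1$; the tail $\int_{r_x}^{1}$ is harmless since $\nu(B(x,r_x))>0$. Hence $\nu$ agrees with $\underline{\nu}^{s}_{\,\infty}$ up to a $\nu$-null set, and the second part of Theorem~\ref{thm:A} closes the argument. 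No real obstacle arises; the only nontrivial input is the classical density theorem, and the strict inequality $t<s$ is precisely what makes the Dini integral converge, which is exactly why the hypothesis requires $t<s$ rather than $t\le s$.
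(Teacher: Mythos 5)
Your proof is correct, but it takes a longer route than the one the word ``immediate'' in the paper suggests. The intended argument applies Theorem~\ref{thm:A} to $\mu$ itself: if $x$ lies in the set $E$ defining $\mu^t_+$, then the two-sided $t$-density bounds $c_x r^t \le \mu(B(x,r)) \le C_x r^t$ (valid for $0<r<r_x$) already verify both conditions in \eqref{eq:mu-perp} for $\mu$ at that point --- the doubling bound via $\mu(B(x,2r))/\mu(B(x,r)) \le (C_x/c_x)2^t$, and the Dini condition via $r^s/\mu(B(x,r)) \le c_x^{-1} r^{s-t}$ together with $s-t>0$ (the tail over $[r_x,1]$ being harmless since $\mu(B(x,r_x))>0$). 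Hence $E$ is contained in the set defining $\underline{\mu}^s_{\,\infty}$, so $\mu^t_+ = \mu\res E \le \underline{\mu}^s_{\,\infty}$, and any countable family of $(1/s)$-H\"older curves carrying $\underline{\mu}^s_{\,\infty}$ also carries $\mu^t_+$. You instead pass to the restricted measure $\nu = \mu\res E$, which forces you to invoke the Lebesgue--Besicovitch density theorem to transfer the density bounds from $\mu(B(x,r))$ to $\nu(B(x,r))$ before applying Theorem~\ref{thm:A} to $\nu$. Every step you take is correct, and the density theorem is a natural tool in this circle of ideas, but here it is an unnecessary ingredient: Theorem~\ref{thm:A} already refers to the original $\mu$, and monotonicity of ``is carried by'' under $\nu \le \underline{\mu}^s_{\,\infty}$ does the rest for free.
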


Our next pair of results gives sharpened versions of Corollary \ref{c:B}, depending on the values of $s$ and $t$. Recall that a set $\Gamma\subseteq\RR^n$ is a \emph{bi-Lipschitz curve} provided that $\Gamma=f([0,1])$ for some map $f:[0,1]\rightarrow\RR^n$ such that $$ L^{-1}|x-y|\leq |f(x)-f(y)|\leq L|x-y|\quad\text{for all }x,y\in[0,1],\text{ for some }1\leq L<\infty.$$ In line with the terminology above, we say that a measure $\mu$ is \emph{carried by bi-Lipschitz curves} if there exist countably many bi-Lipschitz curves $\Gamma_i$ such that $\mu(\RR^n\setminus\bigcup_{i} \Gamma_i)=0$. Theorem \ref{t:HBC} is an immediate consequence of Theorems \ref{thm:B} and \ref{thm:C} and the basic fact that $\limsup_{r\downarrow 0} r^{-t}\Haus^t(E\cap B(x,r))\leq C(t)<\infty$ at $\Haus^t$-a.e.~$x$, for every $t$-set $E$ (e.g. see \cite{Mattila}).

\begin{maintheorem}[Improvement to bi-Lipschitz curves] \label{thm:B} Let $\mu$ be a Radon measure on $\RR^n$ and let $t\in[0,1)$. Then the measure $\mu^t_{+}$ is carried by bi-Lipschitz curves.
\end{maintheorem}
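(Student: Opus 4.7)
The plan is to reduce Theorem \ref{thm:B} to the bi-Lipschitz parameterization theorem for sets of small Assouad dimension supplied earlier in the paper. The strategy has three steps: first decompose $\mu^t_+$ into pieces on which $\mu$ has uniform two-sided density bounds at all sufficiently small scales, then use those bounds to control the Assouad dimension of each support, and finally parameterize each piece by countably many bi-Lipschitz curves.

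For the decomposition, for each pair of positive integers $j,k$ I would set
\begin{equation*}
E_{j,k} := \{x\in B(0,k)\,:\, k^{-1} r^t \leq \mu(B(x,r)) \leq k\, r^t \text{ for all }0<r<1/j\}.
\end{equation*}
The definition of $\mu^t_+$ ensures that $\mu^t_+$-a.e.\ point lies in some $E_{j,k}$, and using dyadic scales one sees that $E_{j,k}$ is Borel. Since a countable sum of measures each carried by bi-Lipschitz curves is again carried by bi-Lipschitz curves, it suffices to show that $\mu\res E_{j,k}$ is carried by bi-Lipschitz curves for each fixed $j$ and $k$.

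For the Assouad dimension bound, fix $j,k$ and set $E=E_{j,k}$. A standard mass-comparison packing argument shows that for $x_0\in E$ and $0<r\leq R\leq 1/(2j)$, the number of balls of radius $r$ required to cover $E\cap B(x_0,R)$ is at most $C(k,t)(R/r)^t$: a maximal $r$-separated family of centers in $E\cap B(x_0,R)$ produces disjoint $(r/2)$-balls sitting inside $B(x_0,2R)$, each of $\mu$-mass at least $k^{-1}(r/2)^t$, while the ambient ball carries $\mu$-mass at most $k(2R)^t$. Combined with the boundedness $E\subseteq B(0,k)$, this forces the closure $\overline E$ to have Assouad dimension at most $t$, which is strictly less than $1$.

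Finally, I would apply the bi-Lipschitz parameterization theorem for Euclidean sets of Assouad dimension less than $1$ (developed elsewhere in this paper) to $\overline E$, obtaining countably many bi-Lipschitz embeddings $f_i\colon[0,1]\to\RR^n$ whose images cover $E$. Summing over $j,k$, this shows $\mu^t_+$ is carried by bi-Lipschitz curves. The substantive obstacle here is not the density-based reduction but the parameterization theorem itself: translating a purely metric Assouad-dimension hypothesis into an honest bi-Lipschitz cover by curves (via some variant of uniform disconnectedness and David--Semmes-type embedding) is the genuinely hard step, and the argument above should be read as a bridge from the density hypothesis of Theorem \ref{thm:B} to the metric hypothesis needed to invoke that parameterization result.
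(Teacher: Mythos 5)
Your proposal is correct and takes essentially the same route as the paper: decompose $\mu^t_+$ into pieces with uniform two-sided density bounds at small scales, deduce that each piece has Assouad dimension at most $t<1$ (the paper phrases this as Lemma~\ref{lem:1} and Corollary~\ref{cor:1}, proved via the Besicovitch covering theorem, whereas you give the equivalent direct packing argument), and then invoke Corollary~\ref{cor:bilip} to cover each piece by a bi-Lipschitz line. The only cosmetic difference is that you make the $E_{j,k}$ decomposition and the scale bookkeeping explicit where the paper leaves them implicit in the passage from Lemma~\ref{lem:1} to Corollary~\ref{cor:1}.
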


\begin{maintheorem}[Improvement to $(m/s)$-H\"older $m$-cubes] \label{thm:C} Let $\mu$ be a Radon measure on $\RR^n$, let  $1\leq m\leq n-1$ be an integer, let $s\in[m,n)$, and let $t\in[0,s)$. Then the measure $\mu^t_{+}$ is carried by $(m/s)$-H\"older $m$-cubes.\end{maintheorem}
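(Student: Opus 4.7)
Plan. My strategy is to reduce Theorem \ref{thm:C} to a H\"older parameterization theorem for sets of small Assouad dimension, which (as the abstract suggests) the paper establishes in an earlier section, and apply it to an Ahlfors-regular-like decomposition of $\mu^t_+$.

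First, I would decompose $\mu^t_+$ into uniformly regular pieces. By a standard Egoroff-type argument applied to the pointwise density conditions defining $\mu^t_+$, there exist compact sets $A_k \subseteq \RR^n$ such that
\[
\mu^t_+\!\left(\RR^n \setminus \bigcup_{k \in \N} A_k\right)=0 \quad \text{while} \quad k^{-1} r^t \leq \mu(B(x,r)) \leq k\, r^t
\]
for every $x \in A_k$ and every $0 < r \leq k^{-1}$. A further $\sigma$-compactness step allows me to assume each $A_k$ is compact.

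Second, these two-sided density bounds force the Assouad dimension of each $A_k$ to be at most $t$. Indeed, for $x \in A_k$ and $0 < r \leq R \leq k^{-1}$, any maximal $r$-separated subset $\{y_i\}_{i=1}^N \subset A_k \cap B(x,R)$ produces disjoint balls $B(y_i, r/2)$, each of $\mu$-mass at least $k^{-1}(r/2)^t$ and contained in $B(x, 2R)$ of total $\mu$-mass at most $k(2R)^t$. Thus $N \leq 4^t k^2 (R/r)^t$, which shows $A_k$ admits $O((R/r)^t)$-fold covers by $r$-balls at every scale and therefore has Assouad dimension at most $t$, hence strictly less than $s$.

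Finally, I would invoke the paper's H\"older parameterization theorem, which I expect to assert that every compact subset of $\RR^n$ with Assouad dimension strictly less than $s$ is contained in the image of some $(m/s)$-H\"older map $f : [0,1]^m \rightarrow \RR^n$. Applying this to each $A_k$ produces countably many $(m/s)$-H\"older $m$-cubes $\Gamma_k \supseteq A_k$, and hence $\mu^t_+(\RR^n \setminus \bigcup_k \Gamma_k)=0$, as desired.

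The main obstacle is the parameterization theorem itself, not the reduction to it. The density decomposition and Assouad dimension estimate are routine, but producing an explicit $(m/s)$-H\"older surjection from $[0,1]^m$ onto (a superset of) $A_k$ is delicate. Based on the keywords of the paper, I anticipate that this step organizes $A_k$ via the leaves of a dyadic tree, exploits the small Assouad dimension to obtain uniform disconnectedness or sparsity at appropriate scales, and then assembles the parameterization through a controlled space-filling construction in the spirit of Stong's H\"older bijections $\Z^j \to \Z^k$ referenced in the introduction.
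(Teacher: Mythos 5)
Your proposal is correct and takes essentially the same route as the paper: decompose $\mu^t_+$ into pieces with two-sided density bounds (the paper's Corollary \ref{cor:1}), deduce via Lemma \ref{lem:1} that each piece has Assouad dimension at most $t<s$, and apply Theorem \ref{thm:param} to wrap each piece in an $(m/s)$-H\"older image of $\RR^m$, which is then covered by countably many $(m/s)$-H\"older $m$-cubes. The only cosmetic difference is that you assert a single $(m/s)$-H\"older cube containing each compact piece $A_k$, whereas the paper's Theorem \ref{thm:param} yields a map $f:\RR^m\to\RR^n$ and then covers $\RR^m$ by the unit cubes $\prod[k_i,k_i+1]$ — either way the conclusion follows.
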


\begin{example} Garnett, Killip, and Schul \cite{GKS} examined a family of Radon measures $\{\mu_\delta:0\leq \delta\leq 1/3\}$ on $\RR^n$ ($n\geq 2$) such that $$0<\mu_\delta(B(x,2r))\leq C_\delta\,\mu_\delta(B(x,r))<\infty\quad\text{for all }x\in\RR^n\text{ and }r>0\qquad(0<\delta\leq 1/3),$$ where $\mu_0$ is a discrete measure and $\mu_{1/3}$ is Lebesgue measure. They proved that there exists $\delta_0=\delta_0(n)\in(0,1/3)$ such that for all $0<\delta\leq \delta_0$, the measure $\mu_\delta$ is simultaneously carried by Lipschitz curves \emph{and} singular to bi-Lipschitz curves (see the introduction of either \cite{BS1} or \cite{BS2}). As a consequence, Badger and Schul (see \cite[Example 1.15]{BS1}) showed that for all $0<\delta\leq \delta_0$, $$\lim_{r\downarrow 0} \frac{\mu_\delta(B(x,r))}{r}=\infty\quad\text{at $\mu_\delta$-a.e. $x\in\RR^n$}.$$ By Theorem \ref{thm:B}, we may conclude in addition that for all $0<\delta\leq \delta_0$ and for all $0<t<1$,  $$
\liminf_{r\downarrow 0} \frac{\mu_\delta(B(x,r))}{r^t}\in\{0,\infty\}\,\text{ or }\,\limsup_{r\downarrow 0} \frac{\mu_\delta(B(x,r))}{r^t} \in \{0,\infty\}\quad \text{at $\mu_\delta$-a.e.~$x\in\RR^n$}.$$
\end{example}

The remainder of the paper is organized into two parts. In Part I (\S\S 2--5), we develop several H\"older and bi-Lipschitz parameterization theorems for a variety of ``small'' sets, which are of separate interest (see especially Theorems \ref{thm:param} and \ref{thm:bilip}). In Part II (\S\S6--7), we derive Theorems \ref{thm:A}, \ref{thm:B}, and \ref{thm:C} using geometric measure theory techniques combined with the technology of Part I. Because it is focused on metric geometry of Euclidean sets, Part I may be read independently from the introduction and Part II.

\part{Parameterizations}

In this part of the paper, we develop several parameterization theorems, which identify certain ``small'' sets as subsets of ``regular" curves or surfaces. In \S2, we give a rather simple criterion for the leaves of a tree of sets to be contained in a $(1/s)$-H\"older curve. This result (see Theorem \ref{t:diam^s}) extends the special case $s=1$ (Lipschitz curves), which was described by Badger and Schul in \cite[\S3]{BS2}. In \S3--\S5, we prove that a Euclidean set with Assouad dimension $t$ strictly less than $s$ is always contained in a $(m/s)$-H\"older $m$-plane, where $m=\lfloor s\rfloor$. In other words, sets with small Assouad dimension are contained in H\"older surfaces (see Theorem \ref{thm:param}). In addition, we prove under an \emph{a priori} quantitative topological assumption that sets with small Assouad dimension are in fact contained in bi-Lipschitz surfaces (see Theorem \ref{thm:bilip}). The proof of this pair of results  incorporates and builds on ideas from MacManus' construction of quasicircles in \cite{MM}.

\section{Drawing H\"older curves through the leaves of summable trees}

\begin{definition}\label{def:tree} We define a \emph{tree of sets} $\mathcal{T}=\bigsqcup_{k=0}^\infty\mathcal{T}_k$ to be a nonempty collection of bounded sets in $\RR^n$ with \begin{enumerate}
\item[(\emph{i})] unique root: $\#\mathcal{T}_0=1$,
\item[(\emph{ii})] parents: for all $k\geq 1$ and $E\in\mathcal{T}_k$, there is an associated set $E^\uparrow\in\mathcal{T}_{k-1}$ called the \emph{parent} of $E$,
\item[(\emph{iii})] geometric diameters: there exist constants $0<\rho<1$ and $P_1,P_2>0$ such that $$P_1\rho^k \leq \diam E \leq P_2\rho^k$$ for all $k\geq 0$ and $E\in\mathcal{T}_k$, and
\item[(\emph{iv})] gap-diameter bound: there exists $P_3>0$ such that $$\gap(E,E^\uparrow):=\inf_{x\in E}\inf_{y\in E^\uparrow}|x-y|\leq P_3\diam E$$ for all $k\geq 1$ and for all $E\in\mathcal{T}_k$.
\end{enumerate} Let $\Top(\mathcal{T})$ denote the unique set in $\mathcal{T}_0$, which we call the \emph{top} of $\mathcal{T}$. An \emph{infinite branch} of $\mathcal{T}$ is a sequence $(E_k)_{k=0}^\infty$ in $\mathcal{T}$ with $E_0=\Top(\mathcal{T})$ and $E_k^\uparrow=E_{k-1}$ for all $k\geq 1$. A point $x\in\RR^n$ is called a \emph{leaf} of $\mathcal{T}$ if there exists an infinite branch $(E_k)_{k=0}^\infty$ and a sequence $(x_k)_{k=1}^\infty$ with $x_k\in E_k$ for all $k\geq 0$ such that $x=\lim_{k\rightarrow\infty} x_k$. We let $$\leaves(\mathcal{T})=\{x\in\RR^n:x\emph{ is a leaf of }\mathcal{T}\}$$ denote the set of \emph{leaves} of $\mathcal{T}$.
\end{definition}

\begin{remark}\label{rk:tree} Definition \ref{def:tree} is loosely modeled on a tree of dyadic cubes, but designed with additional flexibility for applications (e.g.~see Theorem \ref{t:Ss}). Axioms $(\emph{iii})$ and $(\emph{iv})$ in the definition ensure that every infinite branch admits a unique leaf of $\mathcal{T}$: \emph{For every infinite branch $(E_k)_{k=0}^\infty$ of $\mathcal{T}$, there exists $x\in\leaves(\mathcal{T})$ such that $x=\lim_{k\rightarrow\infty} x_k$ for every sequence $(x_k)_{k=0}^\infty$ with $x_k\in E_k$ for all $k\geq 0$.}
\end{remark}

In the main result of this section, we prove that the leaves of a tree of sets with summable diameters are contained in a H\"older curve. The special case of Lipschitz curves (see the proof of \cite[Lemma 3.3]{BS2}) is easier, because of the special fact that every connected, compact set in $\RR^n$ of finite $\Haus^1$ measure is necessarily the image of a Lipschitz map $f:[0,1]\rightarrow\RR^n$ (see \cite[Theorem 4.4]{AO-curves}). For higher dimensional curves, we must construct the H\"older parameterization by hand.

\begin{theorem} \label{t:diam^s} Let $\mathcal{T}$ be a tree of sets in the sense of Definition \ref{def:tree}. If $s\geq 1$ and \begin{equation}S^s(\mathcal{T}):=\label{e:diam^s}\sum_{E\in\mathcal{T}} (\diam E)^s<\infty,\end{equation} then $\Haus^s(\leaves(\mathcal{T}))=0$ and there exists a $(1/s)$-H\"older map $f:[0,1]\rightarrow\RR^n$ such that $\leaves(\mathcal{T})\subseteq f([0,1])$. Moreover, the $(1/s)$-H\"older constant of $f$ can be taken to depend only on $S^s(\mathcal{T})$ and the geometric parameters of the tree ($\rho,P_1,P_2,P_3$).\end{theorem}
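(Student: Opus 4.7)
The statement has two parts---the vanishing of $\Haus^s(\leaves(\mathcal T))$ and the construction of a $(1/s)$-H\"older parameterization---and I plan to handle them in order, using only axioms (i)--(iv) and the summability hypothesis.

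For the measure-zero claim, I would first establish via axioms (iii) and (iv) that for any infinite branch $(E_j)_{j=0}^\infty$ with leaf $x = \lim x_j$, one has $\dist(x, E_k) \leq C\diam E_k$ for every $k$, with $C = C(\rho, P_1, P_2, P_3)$. This is a telescoping estimate: the gap-diameter bound gives $|x_{j+1} - x_j| \leq (1 + P_3)\diam E_j + \diam E_{j+1}$, and the geometric tail $\sum_{j \geq k}\diam E_j \lesssim \rho^k$ converges. Consequently, at scale $\delta_k = (2C+1)P_2\rho^k$, the set $\leaves(\mathcal T)$ is covered by enlargements of the sets in $\mathcal T_k$ of the same diameter order, giving $\Haus^s_{\delta_k}(\leaves(\mathcal T)) \lesssim \sum_{E \in \mathcal T_k}(\diam E)^s$. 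Since $S^s(\mathcal T) < \infty$, the level-$k$ tail tends to $0$, so $\Haus^s(\leaves(\mathcal T)) = 0$.

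For the parameterization, I would recursively assign each $E \in \mathcal T$ a closed interval $I_E \subseteq [0,1]$ and a chosen point $x_E \in E$. If $E$ has children $E_1, \ldots, E_{m_E}$ in some fixed order, decompose
\[
I_E = J_E^{(0)} \cup I_{E_1} \cup J_E^{(1)} \cup \cdots \cup I_{E_{m_E}} \cup J_E^{(m_E)},
\]
where every transition slot has length $|J_E^{(i)}| = (\diam E)^s/L'$ for a single normalization $L'$ making $|I_{\Top}|=1$. Let $f$ interpolate linearly on each $J_E^{(i)}$ between $x_{E_i}$ and $x_{E_{i+1}}$ (reading $x_E$ for $x_{E_0}$ and $x_{E_{m_E+1}}$), and extend along each infinite branch to its unique intersection point, setting $f$ there equal to the leaf. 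Then $f$ is continuous with $\leaves(\mathcal T) \subseteq f([0,1])$. The H\"older estimate splits into two cases: first, if $u, v$ lie in a single slot $J_E^{(i)}$, then using $|x_{E_i} - x_{E_{i+1}}| \leq C_0\diam E$ and $|J_E^{(i)}| \asymp (\diam E)^s/L'$, direct slope calculus gives $|f(u) - f(v)| \lesssim (L')^{1/s}|u - v|^{1/s}$; second, if $E$ is the smallest node with $[u, v] \subseteq I_E$ and $u, v$ lie in distinct subsegments, then $[u, v]$ contains either a full slot $J_E^{(i)}$ or a full child interval $I_{E_j}$, so $|u - v| \gtrsim (\diam E)^s/L'$, and combining with the tree-geometric bound $|f(u) - f(v)| \leq C_1\diam E$ (because $f(I_E)$ lies in an $O(\diam E)$-neighborhood of $E$) yields the matching H\"older ratio.

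The main obstacle is showing that $L'$ is finite with a bound depending only on $S^s(\mathcal T)$ and the tree parameters, since the child count $m_E$ is unrestricted by the axioms. Summing the recursion gives
\[
L' = \sum_{E \in \mathcal T}(m_E + 1)(\diam E)^s,
\]
and the nontrivial piece is $\sum_E m_E(\diam E)^s$. Here I would exploit axiom (iii) to compare each parent to its children: the inequality $(\diam E)^s \leq (P_2/(P_1\rho))^s(\diam E_i)^s$ for every child $E_i$ gives $m_E(\diam E)^s \leq (P_2/(P_1\rho))^s\sum_i(\diam E_i)^s$, and summing over $E$ telescopes into a bounded multiple of $S^s(\mathcal T) - (\diam\Top)^s$. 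Hence $L' \lesssim S^s(\mathcal T)$ with constants depending on $\rho, P_1, P_2, P_3$, producing a H\"older constant of order $S^s(\mathcal T)^{1/s}$, as required.
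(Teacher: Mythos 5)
Your approach to $\Haus^s(\leaves(\mathcal T))=0$ is essentially the paper's (thicken the level-$k$ sets by a geometric tail and use the content bound), and your nested-interval construction of $f$ is a genuinely different route from the paper's: the paper builds a sequence of piecewise-linear ``tours'' $g_k$ of the truncated trees, traversing each segment $[x_{j,E},x_{j-1,E^\uparrow}]$ twice at speed $|x_{j,E}-x_{j-1,E^\uparrow}|^{1-s}$ and pausing at the frontier, then extracts the H\"older modulus from the standard telescoping estimate $|g(u)-g(v)|\le A_k|u-v|+2B_k$ with $A_k\lesssim\rho^{k(1-s)}$ and $B_k\lesssim\rho^k$, whereas you assign a closed subinterval $I_E$ to each node directly and argue the modulus from the interval combinatorics. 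Both constructions are viable, and your computation that $L'=\sum_E (m_E+1)(\diam E)^s\lesssim S^s(\mathcal T)$ with constants depending only on $\rho,P_1,P_2$ is a nice clean way to get the normalization.

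The gap is in your Case~2. You assert that if $E$ is the minimal node with $[u,v]\subseteq I_E$ and $u,v$ lie in distinct subsegments of $I_E$, then $[u,v]$ contains a full slot or a full child interval, hence $|u-v|\gtrsim(\diam E)^s/L'$. This fails when $u$ and $v$ lie in \emph{adjacent} subsegments: take $u$ strictly inside $J_E^{(i-1)}$ and $v$ strictly inside $I_{E_i}$, both arbitrarily close to the shared endpoint $p$. Then $E$ is still the minimal node for $[u,v]$ (since $u\notin I_{E_i}$), but $|u-v|$ can be made as small as you like and $[u,v]$ contains no full subsegment, so the lower bound on $|u-v|$ does not hold. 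The H\"older estimate is still true at such pairs (both $f(u)$ and $f(v)$ are near $x_{E_i}$), but the argument as written does not prove it. You would need to handle this case separately, e.g.\ by splitting at $p$ and iterating down the chain of nested intervals containing $v$ (summing a geometric series of slot-level contributions), or by replacing the case analysis entirely with the telescoping estimate the paper uses, which sidesteps the adjacency issue.
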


\begin{proof} Assume that \eqref{e:diam^s} holds for some $s\geq 1$. Replacing each set in $\mathcal{T}$ with its closure, we may assume without loss of generality that the sets in $\mathcal{T}$ are closed. By deleting sets from $\mathcal{T}$ if necessary, we may also assume without loss of generality that every set in $\mathcal{T}$ belongs to an infinite branch. For all $k\geq 0$ and for all $E\in\mathcal{T}_k$, choose a point $x_{k,E}\in E$. Construct a connected set $\Gamma_\circ$ by drawing a line segment from $x_{k,E}$ to $x_{k-1,E^\uparrow}$ for all $k\geq 1$ and $E\in\mathcal{T}_k$, and let $\Gamma$ denote the closure of $\Gamma_\circ$. By Remark \ref{rk:tree}, $\Gamma$ contains  $\leaves(\mathcal{T})$. Our present goal is to show that $\Gamma$ admits a $(1/s)$-H\"older parameterization by a closed interval. More specifically, for each $k\geq 0$ and $E\in\mathcal{T}_k$, define $$M_{k,E}:=2 \sum_{j={k+1}}^\infty\sum_{\stackrel{F\in\mathcal{T}_j}{F^{\uparrow\dots}=E}} |x_{j,F}-x_{j-1,F^\uparrow}|^s,$$ where the sum is over all descendants of $E$ in $\mathcal{T}$. Note that $M:=M_{0,\Top(\mathcal{T})}<\infty$, because: \begin{equation*}
\begin{split}\frac{1}{2}M_{0,\Top(\mathcal{T})} &\leq \sum_{j=1}^\infty\sum_{F\in\mathcal{T}_j}(\diam F+\gap(F,F^\uparrow)+\diam F^\uparrow)^s \\ &\leq \sum_{j=1}^\infty\sum_{F\in\mathcal{T}_j}\left(\left(1+P_3+\frac{P_2}{P_1\rho}\right)\diam F\right)^s<\left(1+P_3+\frac{P_2}{P_1\rho}\right)^s S^s(\mathcal{T})<\infty\end{split}\end{equation*} by \eqref{e:diam^s}, where $P_1$, $P_2$, $P_3$, and $\rho$ are the geometric parameters of $\mathcal{T}$ (see Definition \ref{def:tree}). We will construct a $(1/s)$-H\"older continuous map $g:[0,M]\rightarrow\RR^n$ such that $\Gamma=g([0,M])$ by defining a sequence $g_k:[0,M]\rightarrow\RR^n$ of piecewise linear maps whose limit is $g$.

For each $k\geq 1$, the image of $g_k$ is the ``truncated tree" $\Gamma_k:=\bigcup_{j=1}^k\bigcup_{E\in\mathcal{T}_j}[x_{j,E},x_{j-1,E^\uparrow}]$. Roughly speaking, $g_k$ is defined by starting at the root $x_{0,\Top(\mathcal{T})}$ and then touring each of the edges $[x_{j,E},x_{j-1,E^\uparrow}]$ in $\Gamma_k$ twice (once ``down the tree", once ``up the tree") at speed $$\left|x_{j,E}-x_{j-1,E^\uparrow}\right|^{1-s},$$ with the additional rule that whenever reaching a point $x_{k,E}$ in the ``lowest level" of $\Gamma_k$, we pause for $M_{k,E}$ time units in the domain of $g_k$ before continuing the tour. By defining the maps $g_1, g_2,\dots$ inductively, one can ensure that $g_k(t)=g_{k+1}(t)$ for all times $t$ where $g_k'(t)\neq 0$ (that is, for all times where the tour of $\Gamma_k$ is not paused). Or, in other words, one can construct $g_{k+1}$ from $g_k$ by  modifying the definition of $g_k$ only in the intervals where the tour of $\Gamma_k$ was paused. We leave it to the reader to give a precise definition of the maps $g_k$ if desired. The salient facts of any such construction are these: \begin{itemize}
\item $g_k([0,M])=\Gamma_k$ for all $k\geq 1$;
\item $|g_k(x)-g_k(y)| \leq A_k|x-y|$ for all $k\geq 1$ and $x,y\in[0,M]$, where $$A_k :=\sup_{1\leq j\leq k}\sup_{E\in \mathcal{T}_j} \left|x_{j,E}-x_{j-1,E^\uparrow}\right|^{1-s} \leq \left(P_2(1+P_3+\rho^{-1})\right)^{1-s}\rho^{k(1-s)};\text{ and,}$$
\item $|g_k(x)-g_{k+1}(x)| \leq B_k$ for all $k\geq 1$ and $x\in[0,M]$, where $$B_k:=\sup_{E\in\mathcal{T}_{k+1}}|x_{k+1,E}-x_{k,E^\uparrow}|\leq P_2(\rho+P_3\rho+1)\rho^k.$$
\end{itemize} Define $g:[0,M]\rightarrow \R^n$ pointwise by $g(t):=\lim_{k\rightarrow\infty} g_k(t)=g_1(t)+\sum_{k=1}^\infty (g_{k+1}(t)-g_k(t))$. The existence and continuity of $g$ are immediate, since $\sum_{k=1}^\infty B_k<\infty$. From this point, it is a standard exercise to show that $g$ is $(1/s)$-H\"older continuous with H\"older constant depending on at most $P_1$, $P_2$, $P_3$, $\rho$, and $M$ (hence on $S^s(\mathcal{T})$); cf.~\cite[Lemma VII.2.8]{ss-reals}. It is also easy to check that $g([0,M])=\Gamma$.

It remains to show that $\Haus^s(\leaves(\mathcal{T}))=0$. If $E,E_1\in\mathcal{T}$ and $E_1$ is a descendent of $E$ (i.e.~$E_1^\uparrow=E$), then $$\sup_{x\in E_1}\dist(x,E) \leq \gap(E_1,E)+\diam E_1 \leq (P_3+1)\diam E_1 \leq (P_3+1)\,\frac{P_2}{P_1}\rho\diam E.$$ More generally, if $E,E_k\in\mathcal{T}$ and $E_k$ is a $k$th descendent of $E$ (i.e.~ $E_k^\uparrow =E_{k-1}$, \dots, $E_2^\uparrow = E_1$, and $E_1^\uparrow = E$), then \begin{equation*}\begin{split}\sup_{x\in E_k}\dist(x,E) &\leq \sup_{x\in E_k}\dist(x,E_{k-1})+\dots+\sup_{x\in E_1}\dist(x, E)\\ &\leq (P_3+1)\,\frac{P_2}{P_1}(\rho^k+\dots+\rho)\diam E.\end{split}\end{equation*} Thus, for all $E\in\mathcal{T}$, the set $$\widetilde{E}:=\left\{x\in\RR^n: \dist(x, E) \leq  (P_3+1)\frac{P_2}{P_1}\frac{\rho}{1-\rho}\diam E\right\}$$ contains all descendants of $E$. Hence $\leaves(\mathcal{T}) \subseteq \bigcup_{T\in \mathcal{T}_k} \widetilde{T}$ for all $k\geq 1$. In particular, we can bound the $s$-dimensional Hausdorff content $$\Haus^s_\infty(\leaves(\mathcal{T})) \leq \sum_{E\in\mathcal{T}_k}(\diam\widetilde{E})^s\leq C(P_1,P_2,P_3,\rho) \sum_{E\in\mathcal{T}_k} (\diam E)^s\quad\text{for all $k\geq 1$}.$$ Letting $k\rightarrow\infty$, we obtain $\Haus^s(\leaves(\mathcal{T}))=0$, because $\sum_{E\in\mathcal{T}}(\diam E)^s<\infty$.
\end{proof}

\section{Drawing surfaces through sets with small Assouad dimension}
\label{s:surfaces}

In this section, we present four related parameterization theorems, which draw surfaces through ``small" and/or uniformly disconnected sets; see Theorems \ref{thm:param}, \ref{thm:bilip}, \ref{thm:biHolder}, and \ref{thm:QS}. The notion of size that we use for this purpose is Assouad dimension; for an in depth survey of this concept, see \cite{Luukk}.

\begin{definition}[Assouad dimension] \label{def:assouad} Let $X$ be a metric space, let $\beta>0$, and let $C>1$. We say $X$ is \emph{$(C,\beta)$-homogeneous} (or simply, $X$ is \emph{$\beta$-homogeneous}) if for every bounded set $A\subseteq X$ and for every $\d\in(0,1)$, there exist sets $A_1,\dots,A_N \subseteq X$ such that $$ A_1\cup\dots\cup A_N\supseteq A,\quad \diam A_i \leq \delta \diam A, \quad\text{and}\quad\card\{A_1,\dots,A_N\}\leq  C \d^{-\beta}.$$ The \emph{Assouad dimension} of $X$, denoted $\dim_A X$, is defined by
\[ \dim_A X  := \inf\{\beta>0 : X\text{ is }\beta\text{-homogeneous}\}\in[0,\infty]. \] \end{definition}

The first parameterization theorem extends \cite[Theorem 3.4]{MM2000}, a measure-theoretic condition for an $s$-set to be contained in a H\"older surface, which is stated  without a proof. For the connection between Assouad dimension and the condition in \cite{MM2000}, see Lemma \ref{lem:1} below.
We supply a proof of Theorem \ref{thm:param} in \S\ref{sec:holder}.

\begin{theorem}[H\"older parameterization] \label{thm:param}
Let $1\leq m\leq n-1$ be integers, let $s\in[m,n)$, and let $E\subseteq\RR^n$. If $\dim_AE<s$, then there exists an $(m/s)$-H\"older continuous map $f:\R^m \to \R^n$ such that $E\subseteq f(\R^m)$.
\end{theorem}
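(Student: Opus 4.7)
The map $f$ will be built by assigning, to each dyadic subcube of $[0,1]^m$ at level $k$ (where the subdivision uses a factor $\lambda=1/N$ for a large integer $N$), a ``target'' ball in $\RR^n$ centered in $E$ of radius comparable to $\lambda^{km/s}$, in a manner that respects parent-child nesting. A $(m/s)$-H\"older map sends a set of diameter $d$ to a set of diameter $\lesssim d^{m/s}$, so the target-ball scaling $\lambda^{km/s}$ matches the parameter cube scaling $\lambda^k$ precisely. The key pigeonhole is that a parent parameter cube contains $\lambda^{-m}$ dyadic subcubes, while by the Assouad hypothesis, the parent target ball requires only $\lesssim \lambda^{-mt/s}$ balls of the child radius to cover $E\cap (\text{parent})$, for any $t\in(\dim_A E, s)$. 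The strict inequality $t<s$ leaves a positive gap $m(1-t/s)>0$ that absorbs the Assouad constant once $\lambda$ is taken small.

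\textbf{Construction of the tree.} We may assume $E$ is bounded (decompose $\RR^m$ into unit cubes and paste countably many bounded pieces via translations, preserving the H\"older exponent). Fix $t\in (\dim_A E,s)$ and let $C_0$ be the associated Assouad covering constant. Pick an integer $N\geq 2$ so that $\lambda := 1/N$ satisfies $C_0 \lambda^{m(1-t/s)} \leq 1$, equivalently $C_0 \lambda^{-mt/s}\leq N^m$. Build a tree of sets $\mathcal{T}=\bigsqcup_{k\geq 0}\mathcal{T}_k$ in $E$: set $\mathcal{T}_0=\{E\}$, and having chosen a ball $B_{k-1,j}$ of radius $\lambda^{(k-1)m/s}$ centered in $E$, cover $E\cap cB_{k-1,j}$ (a controlled enlargement) by at most $C_0\lambda^{-mt/s}\leq N^m$ balls of radius $\lambda^{km/s}$ centered in $E$; these are the children. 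Simultaneously, assign each child to a distinct dyadic subcube of the parent's assigned parameter cube; this is possible because each parent has at most $N^m$ children and each parameter cube has exactly $N^m$ dyadic subcubes of side $\lambda^k$.

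\textbf{Definition of $f$ and the H\"older estimate.} Define piecewise linear maps $f_k\colon [0,1]^m\to\RR^n$ by assigning, at the vertex grid of each level-$k$ dyadic subcube $Q$, the value $c_{k,Q}$ equal to the center of the covering ball attached to $Q$ (or, if $Q$ has no assigned child, the center of the parent); then interpolate linearly inside $Q$. Since the assignments at consecutive levels lie within a common enlarged parent ball, $\|f_{k+1}-f_k\|_\infty \lesssim \lambda^{km/s}$ and $\Lip(f_k)\lesssim \lambda^{k(m/s-1)}$. Hence $f:=\lim_k f_k$ exists and is continuous, and for $|x-y|\sim \lambda^k$ the within-level estimate together with the telescoping tail gives $|f(x)-f(y)|\lesssim \lambda^{km/s}\sim |x-y|^{m/s}$, i.e.\ $f$ is $(m/s)$-H\"older. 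Every $x\in E$ lies in a nested sequence of covering balls $B_{k,i_k}$, whose assigned parameter cubes shrink to a single point $\xi\in [0,1]^m$, so $f(\xi)=\lim_k c_{k,i_k}=x$; therefore $E\subseteq f([0,1]^m)$. Extend $f$ from $[0,1]^m$ to all of $\RR^m$ by the H\"older version of the McShane-Whitney extension theorem, applied coordinate-wise.

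\textbf{Main obstacle.} The crux is the coordinated construction in the previous paragraph: arranging covers at all scales into a tree and aligning them with the dyadic combinatorics of $[0,1]^m$, while keeping the gluing consistent on cube boundaries so that the interpolated maps $f_k$ are well-defined and converge uniformly. The choice of $\lambda$ small enough to absorb $C_0$ is dictated solely by the slack $s-t>0$, and this is the step where the MacManus-style technique from \cite{MM} (alluded to in the introduction) enters. Once the tree and the vertex-compatible interpolation scheme are in place, the H\"older bound and the containment $E\subseteq f(\RR^m)$ follow from routine estimates.
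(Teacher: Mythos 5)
Your core pigeonhole idea is right and is essentially the engine of the paper's proof: cover $E$ at geometric scales by $\lesssim C_0\lambda^{-mt/s}$ balls of radius $\lambda^{km/s}$, note that $[0,1]^m$ has $\lambda^{-m}$ dyadic children at scale $\lambda$, and use the gap $m(1-t/s)>0$ to absorb $C_0$ by shrinking $\lambda$. You also take a more direct route than the paper, which first proves a codimension-one version (Proposition~\ref{prop:n-1}) and then composes $(n-m)$ such maps with carefully chosen exponents and a final radial snowflake correction; going directly from $\RR^m$ to $\RR^n$ is a legitimate simplification of that scaffolding.

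However, there is a genuine gap in the step ``assign $c_{k,Q}$ at the vertex grid of each level-$k$ subcube $Q$ and interpolate linearly.'' Two problems: (a) a vertex of the dyadic lattice is shared by up to $2^m$ cubes $Q$ whose associated centers $c_{k,Q}$ differ, so the vertex values are not consistently defined and the piecewise linear $f_k$ is not a function; (b) even if (a) is repaired by some tie-breaking rule, the H\"older estimate breaks. If $Q_1,Q_2$ are \emph{adjacent} level-$k$ cubes whose longest common dyadic ancestor is at level $j\ll k$ (this is unavoidable near a dyadic hyperplane, e.g.\ points $0.5\pm\epsilon$ in $[0,1]$ have common ancestor at level $0$), then points $x\in Q_1$, $y\in Q_2$ can satisfy $|x-y|\sim\lambda^k$ while $|f(x)-f(y)|$ is only controlled by $\diam$ of the level-$j$ target ball, $\sim\lambda^{jm/s}\gg\lambda^{km/s}$. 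There is nothing in the construction forcing the targets of adjacent parameter cubes at different branches of the tree to be close, so the bound $|f(x)-f(y)|\lesssim|x-y|^{m/s}$ fails.

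The fix, and the device the paper actually uses, is to \emph{not} parameterize over all of $[0,1]^m$: assign children only to a family of subcubes that are mutually separated by a gap comparable to their sidelength (this costs a harmless factor $3^m$ in the pigeonhole, still absorbed by $\lambda$ small), so the domain of $f$ is a Cantor-type set $E'\subset[0,1]^m$ rather than a cube. On $E'$, if $x\neq y$, the longest common ancestor at level $j$ forces $|x-y|\geq\gap\gtrsim\lambda^{j+1}$ while $|f(x)-f(y)|\leq\diam(\text{level-}j\text{ ball})\lesssim\lambda^{jm/s}$, giving the H\"older bound directly. McShane's extension then produces $f:\R^m\to\R^n$ — it is the main mechanism, not an afterthought. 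Your proposal invokes McShane only at the very end and relies on an interpolated map on all of $[0,1]^m$ in the interim, which is precisely where the argument breaks down.
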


In order to obtain bi-Lipschitz parameterizations or even bi-H\"older parameterizations, it is natural to impose topological assumptions on the set. To wit, it is well known that if $E \subseteq \R^n$ is a totally disconnected closed set, then $E$ is homeomorphic to a subset of the standard ternary Cantor set (e.g.~see \cite[Theorem 7.8]{Kechris}). Furthermore, for every positive integer $m<n$, there exists a topological embedding $f:\R^m \to \R^n$ whose image contains $E$ (e.g.~see \cite{Rushing}). To ensure that $f$ satisfies additional regularity properties, we employ a scale-invariant version of total disconnectedness, which was introduced by David and Semmes \cite[\S15]{DS}.

\begin{definition}[uniformly disconnected spaces] Let $X$ be a metric space. We say that $X$ is \emph{$c$-uniformly disconnected} for some $c\geq 1$ if for all $x\in X$ and for all $0<r\leq\diam{X}$, there exists $E_{x,r}\subseteq X$ containing $x$ such that $\diam{E_{x,r}} \leq r$ and $\gap(E_{x,r},X\setminus E_{x,r}) \geq r/c$. To suppress dependence on $c$, we may simply say that $X$ is \emph{uniformly disconnected}.\end{definition}

The second parameterization theorem states that under the additional assumption of uniform disconnectedness, the H\"older parameterization in Theorem \ref{thm:param} upgrades to a bi-Lipschitz parametrization. We provide a proof of Theorem \ref{thm:bilip} in \S\ref{sec:BLthm}.

\begin{theorem}[bi-Lipschitz parameterization] \label{thm:bilip} Let $1\leq m\leq n-1$ be integers. If $E\subseteq\RR^n$ is uniformly disconnected and $\dim_A E<m$, then  there exists a bi-Lipschitz embedding $f:\R^m \to \R^n$ such that $E\subseteq f(\R^m)$.\end{theorem}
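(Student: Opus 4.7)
The plan is to construct a bi-Lipschitz bijection between $E$ (after reducing to the compact case) and a suitable Cantor-type set $K \subseteq \RR^m$, and then extend it to a bi-Lipschitz embedding $\tilde\phi \colon \RR^m \to \RR^n$. I will focus on the case when $E$ is compact; the unbounded case reduces to this via a standard partitioning of $E$ across a countable family of disjoint balls, handled inside a single embedding by placing the models in disjoint pieces of $\RR^m$.

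First I would use the $c$-uniform disconnectedness to build a hierarchical decomposition $\{E_{k,\a}\}_{k \ge 0}$: each $E_{k,\a}$ has diameter at most $\rho^k$, distinct siblings are pairwise gap-separated by at least $\rho^k/c$ inside their common parent $E_{k-1,\beta}$, and the level-$k$ pieces partition $E$. Here $\rho \in (0,1)$ is a small parameter to be chosen. The assumption $\dim_A E < m$ supplies $t < m$ and $C > 0$ with the uniform child-count bound $N \le C\rho^{-t}$; by taking $\rho$ sufficiently small, I may arrange $N \le \rho^{-(m-\delta)}$ for some $\delta > 0$. Next I build a model set $K \subseteq \RR^m$ with the same combinatorial tree structure: to each $E_{k,\a}$ I attach a closed cube $Q_{k,\a} \subseteq \RR^m$ of sidelength $\asymp \rho^k$, nested according to the tree, with sibling gap $\asymp \rho^k$ and gap $\asymp \rho^k$ from $\partial Q_{k-1,\beta}$. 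The bound $N \le \rho^{-(m-\delta)}$ is precisely what guarantees such a cube packing exists inside each parent cube. Setting $K = \bigcap_k \bigcup_\a Q_{k,\a}$, the level-by-level identification of tree leaves gives a bijection $\phi \colon K \to E$, and comparing the scale of the lowest common ancestor of $x,y$ in the tree yields $|\phi(x) - \phi(y)| \asymp |x-y|$, so $\phi$ is bi-Lipschitz with constants depending only on $c$, $\rho$, and the gap parameters.

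The main obstacle is extending $\phi$ across the gaps in $\RR^m$ to a bi-Lipschitz embedding $\tilde\phi \colon \RR^m \to \RR^n$. Following the scheme behind MacManus's quasicircle construction in \cite{MM}, I would extend generation by generation: assuming $\tilde\phi$ is already defined on $\bigcup_\a Q_{k,\a}$ with quantitative bi-Lipschitz constants, I extend it over each annular region $A_{k,\beta} := Q_{k-1,\beta} \setminus \bigcup_{\a : Q_{k,\a} \subseteq Q_{k-1,\beta}} Q_{k,\a}$ in $\RR^m$. Topologically, $A_{k,\beta}$ is a cube with finitely many disjoint subcubes removed, and its inner boundary components have already been mapped into $\RR^n$ near $E_{k-1,\beta}$. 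The extension is produced by joining the boundary images through disjoint bi-Lipschitz ``tubes'' in a neighborhood of $E_{k-1,\beta}$ in $\RR^n$; the hypothesis $n \ge m+1$ provides the extra codimension needed to route these tubes without self-intersection.

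The delicate point, and the technical core of the proof, is uniformity across scales: since the children's images have mutual $\RR^n$-distance comparable to the $\RR^m$-distance of the source cubes (both of order $\rho^k$, by the bi-Lipschitz control on $\phi$), a carefully scaled local extension inside each $A_{k,\beta}$ yields bi-Lipschitz constants for $\tilde\phi$ independent of $k$. Gluing these extensions over all generations produces $\tilde\phi$ on the initial cube containing $K$, and a standard bi-Lipschitz extension from a cube of $\RR^m$ to all of $\RR^m$ (acting as the identity far from $K$) then delivers $f := \tilde\phi \colon \RR^m \to \RR^n$ with $E \subseteq f(\RR^m)$.
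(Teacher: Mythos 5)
Your proposal is close in spirit to the paper's argument and would, if fleshed out, yield the result; but the crux of the proof is precisely the extension step that you treat in a single paragraph, and there are two specific places where the sketch papers over genuine difficulties.

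First, the paper does \emph{not} attempt a direct construction of an $m$-dimensional surface in $\RR^n$ for general $m \le n-1$; instead it proves a codimension-one statement (Proposition \ref{sec:tree}: a bi-Lipschitz embedding $\RR^{n-1} \to \RR^n$ whose image contains $E$) and then composes $n-m$ such codimension-one embeddings, using the fact that uniform disconnectedness and Assouad dimension are bi-Lipschitz invariants of the intermediate preimage sets $E_k$. Your direct approach would require constructing disjoint $m$-dimensional ``tubes'' inside $\RR^n$ between the boundary spheres of the children cubes; when $m < n-1$ these tubes are of high codimension and their disjointness is plausible, but when $m = n-1$ (the case the paper actually constructs by hand) the tubes are codimension-one hypersurfaces, and codimension-one objects in $\RR^n$ generically intersect. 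Avoiding such intersections requires the explicit combinatorial structure (MacManus' Lemma \ref{lem2} giving manifolds $M_w$ with grid-aligned boundary, polygonal arcs $\gamma_{wi}$ chosen sequentially in nested grids $\mathscr{G}^1_{2^{-i-1}\varepsilon^{|w|+1}}$ so as to avoid both the sibling manifolds $M_{wj}$ and the previously-chosen arcs $\gamma_{wj}$, $j<i$) that the paper sets up in \textsc{Step 1}. Your proposal asserts the tubes can be routed without self-intersection because $n \ge m+1$, but gives no mechanism; in codimension one this assertion is exactly the hard part.

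Second, and relatedly, the claim that ``a carefully scaled local extension inside each $A_{k,\beta}$ yields bi-Lipschitz constants for $\tilde\phi$ independent of $k$'' is the heart of the matter and is not proved. The paper handles this through the tube-straightening Lemma \ref{lem:blthick}, which gives an $L$-bi-Lipschitz rectification of each tube with $L$ controlled by $\delta^{-1}\mathcal{H}^1(\gamma)$, together with the length bound of Remark \ref{rem:length} (arcs confined to a bounded union of grid cubes have uniformly bounded normalized length), a quantitative gap estimate between non-adjacent cubical pieces, and the separation argument in \textsc{Step 3} that reduces the bi-Lipschitz verification for arbitrary point pairs to the scale of the minimal separating piece $\mathcal{S}_{w_0}$. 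None of this machinery appears in your sketch, and it is not a standard citation but several pages of construction. I would encourage you to either carry out the generation-by-generation extension explicitly in the codimension-one case (proving uniform control of the gluing constants), or to import the codimension-one result and reduce the general case to it by composition as the paper does; the latter is cleaner and avoids the high-codimension tube routing entirely.
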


When $\dim_A E<1$, the set $E$ is uniformly disconnected by \cite[Proposition 5.1.7]{McTy} (also see \cite[Lemma 15.2]{DS}). Thus, Euclidean sets of Assouad dimension strictly less than one are always contained in a bi-Lipschitz line.

\begin{corollary} \label{cor:bilip} Let $E\subseteq\RR^n$. If $\dim_A E<1$, then there exists a bi-Lipschitz embedding $f:\RR\rightarrow\RR^n$ such that $E\subseteq f(\RR)$.\end{corollary}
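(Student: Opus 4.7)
The plan is to derive the corollary as an immediate consequence of Theorem \ref{thm:bilip} applied with $m=1$. The hypothesis $\dim_A E < 1$ already supplies the dimensional condition $\dim_A E < m$, so the only remaining task is to verify that such an $E$ is automatically uniformly disconnected. Once this is in hand, Theorem \ref{thm:bilip} produces a bi-Lipschitz embedding $f : \R \to \R^n$ whose image contains $E$, and we are done.

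To establish uniform disconnectedness of $E$ from $\dim_A E < 1$, I would either cite the known fact \cite[Proposition 5.1.7]{McTy} (or \cite[Lemma 15.2]{DS}) directly, or give a short self-contained argument. The self-contained version runs as follows: fix $\beta$ with $\dim_A E < \beta < 1$, so that $E$ is $(C,\beta)$-homogeneous for some $C>1$. For any $x \in E$ and any $0 < r \leq \diam E$, apply the homogeneity property to the bounded set $A = E \cap B(x,r)$ with a scale $\delta \in (0,1)$ to be chosen. This covers $A$ by at most $C \delta^{-\beta}$ sets of diameter at most $\delta \cdot 2r$. Projecting these covering sets orthogonally onto the line through $x$ in some fixed direction, one obtains at most $C\delta^{-\beta}$ intervals of total length at most $2C\delta^{1-\beta}r$. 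Since $1-\beta > 0$, choosing $\delta$ sufficiently small (depending only on $C$ and $\beta$) makes this total length strictly less than $r$. Consequently, the projection of $A$ misses some point of the projection of $[x-r/2, x+r/2]$, which yields a hyperplane separating $A$ into two pieces with a definite gap, one of which contains $x$ and has diameter at most $r$ and distance at least $r/c$ from its complement in $E$, for a constant $c$ depending only on $C$ and $\beta$.

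With uniform disconnectedness verified, Theorem \ref{thm:bilip} in the case $m=1$ directly produces the desired bi-Lipschitz embedding $f : \R \to \R^n$ with $E \subseteq f(\R)$. There is no genuine obstacle here: the corollary is a quick packaging of Theorem \ref{thm:bilip} with the known implication ``small Assouad dimension $\Rightarrow$ uniform disconnectedness,'' which relies only on Definition \ref{def:assouad} and a standard pigeonhole/projection argument. The only decision to make in writing the proof is whether to quote the implication as a black box (shorter) or to include the brief projection argument sketched above (more self-contained); either is adequate given that the heavy lifting occurs inside Theorem \ref{thm:bilip}.
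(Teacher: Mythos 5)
Your primary route is exactly the paper's: quote the implication $\dim_A E < 1 \Rightarrow E$ uniformly disconnected (citing \cite[Proposition 5.1.7]{McTy} or \cite[Lemma 15.2]{DS}) and then invoke Theorem \ref{thm:bilip} with $m=1$. So as a proof by black-box citation, the proposal is correct and identical to what the paper does.

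However, the self-contained projection sketch you offer as an alternative has a real gap. Projecting $E\cap B(x,r)$ onto a single line and finding a gap in the projection produces only a separating \emph{hyperplane} $H$; but uniform disconnectedness requires a set $E_{x,r}\ni x$ that is simultaneously \emph{bounded} ($\diam E_{x,r}\le r$) and uniformly far from $E\setminus E_{x,r}$. If you take $E_{x,r}=E\cap H^-$ it may be unbounded; if you take $E_{x,r}=E\cap H^-\cap B(x,r')$ for some $r'$, then points of $E$ just outside $B(x,r')$ on $x$'s side of $H$ can lie arbitrarily close to boundary points of $E_{x,r}$, so $\gap(E_{x,r},E\setminus E_{x,r})$ is not controlled by the gap in the projection. (Also note the projection of $A$ must miss an \emph{interval} of length comparable to $r$, not merely a point; this follows by a pigeonhole on the at most $C\delta^{-\beta}+1$ complementary intervals, but it needs to be said.) The standard repair, and the one implicit in the references you cite, is to pigeonhole on \emph{annuli} centered at $x$: with $E\cap B(x,r)$ covered by $\le C\delta^{-\beta}$ sets of diameter $\le 2\delta r$, each covering set meets at most two consecutive annuli of width $2\delta r$, so for $\delta$ small (depending only on $C,\beta$, using $\beta<1$) some annulus $\{c_1 r\le |y-x|\le c_2 r\}$ is disjoint from $E$; then $E_{x,r}:=E\cap B(x,c_1 r)$ works. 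If you keep the self-contained version, replace the one-direction projection with this annulus argument.
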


\begin{remark} The upper bound on the Assouad dimension in Theorem \ref{thm:bilip} is necessary in that there do not exist bi-Lipschitz embeddings $f:X\to \R^n$ when $\dim_A X\geq n$. This assertion follows from the well-known fact that Assouad dimension is a bi-Lipschitz invariant (see \cite[Theorem A.5]{Luukk}) and the fact that uniformly disconnected sets in $\R^n$ are porous and have Assouad dimension strictly less than $n$ (see \cite[Theorem 5.2]{Luukk}).\end{remark}

In the event that $E$ is uniformly disconnected and $\dim_A E \geq m$, one may expect that the bi-Lipschitz embedding of Theorem \ref{thm:bilip} could be replaced by a $(1/\gamma)$-bi-H\"older embedding of $\R^m$ provided that $\dim_A E <\gamma m$. However, $(1/\gamma)$-bi-H\"older embeddability of $\R^m$ into $\R^n$ when $\gamma m < n$ is a formidable problem, which has been solved only in special cases such as when $m=1$ \cite{BoHei, RV},  when $\frac{1}{\gamma}$ is sufficiently close to $1$ \cite{DT99,DT}, or when $n$ is much bigger than $\gamma m$ \cite{Assouad2}. By modifying the proof of Theorem \ref{thm:bilip} using the snowflaking techniques for polygonal paths from \cite{BoHei, RV}, one can obtain the following bi-H\"older variant of the bi-Lipschitz parameterization theorem when $m=1$. Because we do not need Theorem \ref{thm:biHolder} for our applications in Part II, we leave details of the proof of the theorem to the interested reader.

\begin{theorem}[bi-H\"older parameterization] \label{thm:biHolder} Let $n\geq 2$ be an integer and let $s\in[1,n)$. If $E\subseteq\RR^n$ is uniformly disconnected and $\dim_A E<s$, then there exists a $(1/s)$-bi-H\"older embedding $f:\RR\rightarrow\RR^n$ (that is, both $f$ and $f^{-1}$ are $(1/s)$-H\"older) such that $E\subseteq f(\R)$.
\end{theorem}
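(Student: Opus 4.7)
The plan is to mimic the proof of Theorem \ref{thm:bilip} in the case $m=1$, but to replace each straight segment of the constructed polygonal curve by a snowflake arc of Hausdorff dimension $s$, using the polygonal snowflaking machinery of \cite{BoHei, RV}. First I would reduce to the bounded case by rescaling so that $\diam E\leq 1$; at the end, the parametrization is extended to all of $\RR$ by attaching dyadically-scaled copies of the bounded construction on each side, which is compatible with the snowflake because a $(1/s)$-bi-H\"older snowflake arc of diameter $d$ naturally sits on a parameter interval of length $\asymp d^s$.

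The uniform disconnectedness of $E$ yields, as in the proof of Theorem \ref{thm:bilip}, a tree of sets $\mathcal{T}=\bigsqcup_k \mathcal{T}_k$ in the sense of Definition \ref{def:tree}, refining $E$ at scales $\rho^k$, with the additional property that distinct sets in $\mathcal{T}_k$ are separated by $\gtrsim \rho^k$. The hypothesis $\dim_A E<s$ then gives $\sum_{E\in\mathcal{T}}(\diam E)^s<\infty$. Using the polygonal-skeleton portion of the proof of Theorem \ref{thm:bilip} (which only uses $n\geq 2$ and the tree structure, not $\dim_A E<m$), one assembles a simple piecewise linear arc $\varphi\colon [0,L]\to\RR^n$ whose image visits a chosen representative of each cluster, composed of segments $[p_j,p_{j+1}]$ of lengths $\ell_j$ comparable to the diameters of the corresponding tree sets; in particular, $\sum_j \ell_j^s<\infty$.

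Next, for each segment $[p_j,p_{j+1}]$, I would invoke the polygonal snowflaking construction of \cite{BoHei, RV} to obtain a snowflake arc $\sigma_j\subseteq\RR^n$ of Hausdorff dimension $s$, with endpoints $p_j$ and $p_{j+1}$, contained in the tube $N_{\eta\ell_j}([p_j,p_{j+1}])$ for a small constant $\eta=\eta(c,n,s)>0$, and equipped with a canonical $(1/s)$-bi-H\"older parametrization $f_j\colon[a_j,a_{j+1}]\to\sigma_j$ on an interval of length $a_{j+1}-a_j\asymp\ell_j^s$. Concatenating the $f_j$ produces a continuous map $f\colon[0,L']\to\RR^n$, where $L'=\sum_j(a_{j+1}-a_j)<\infty$, whose image contains $E$.

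The main obstacle will be verifying the inverse H\"older estimate $|f(t)-f(t')|\geq C^{-1}|t-t'|^s$ across distinct snowflake arcs. The forward bound $|f(t)-f(t')|\leq C|t-t'|^{1/s}$ follows from the bi-H\"older property of each $f_j$ internally, and across many arcs by a telescoping argument that exploits $\sum_j \ell_j^s<\infty$. The inverse bound, by the tubular containment of the $\sigma_j$, reduces to showing $\dist(\sigma_i,\sigma_j)\gtrsim|t-t'|^s$ when $t$ and $t'$ lie in distinct arcs $\sigma_i$ and $\sigma_j$; this in turn reduces to the uniform-disconnectedness gap at the scale of the common ancestor of the corresponding clusters in $\mathcal{T}$, provided $\eta$ is chosen small relative to $c^{-1}$. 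Coordinating the tubular radius $\eta\ell_j$ with the disconnectedness gap at every level is the delicate step, and is precisely what the polygonal snowflaking results of \cite{BoHei, RV} are designed to furnish in $\RR^n$ with $n\geq 2$, which is also the reason the hypothesis $n\geq 2$ cannot be dropped.
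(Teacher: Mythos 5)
The paper does not prove Theorem~\ref{thm:biHolder}. It only remarks, in the paragraph preceding the statement, that the result can be obtained by modifying the proof of Theorem~\ref{thm:bilip} (specialized to $m=1$) using the snowflaking techniques for polygonal paths from \cite{BoHei, RV}, and it explicitly ``leaves the details of the proof to the interested reader.'' Your proposal is a faithful expansion of that one-sentence sketch and follows exactly the indicated route: uniform disconnectedness yields the separated cluster tree of Corollary~\ref{cor:decomp}, the hypothesis $\dim_A E<s$ controls the branching so that $\sum(\diam E)^s<\infty$, the polygonal skeleton is assembled as in Proposition~\ref{sec:tree}, each segment is replaced by a $(1/s)$-snowflake arc as in \cite{BoHei, RV}, and the inverse H\"older estimate is driven by the uniform-disconnectedness gap at the scale of the common ancestor. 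The one caveat worth flagging is that the forward H\"older bound across many arcs is not a direct consequence of $\sum_j \ell_j^s<\infty$ together with a telescoping estimate, as you suggest: one needs the scale-by-scale statement that when $t,t'$ lie in distinct subclusters of a common ancestor $M_w$ they are separated in parameter by $\gtrsim (\diam M_w)^s$, which comes from the tree structure (at least one intermediate travel segment of length $\sim\diam M_w$) rather than from global summability; this is exactly the coordination that the \cite{BoHei, RV} framework is designed to supply, but it should be stated as such rather than attributed to telescoping.
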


MacManus \cite{MM} proved that if $E\subseteq \R^n$ is uniformly disconnected, then there exists a quasisymmetric\footnote{For the definition of and background on quasisymmetric maps, we refer the reader to \cite{Heinonen}.} embedding $f:\R \to \R^n$ whose image contains $E$. Note that this result does not require a bound on the dimension of $E$, which is natural in view of the fact that quasisymmetric maps may increase the dimension of sets. Arguments similar to ones used in the proof of Theorem \ref{thm:bilip} (also see the proof of \cite[Theorem 6.3]{Vais1}) can be used to obtain the following extension of MacManus' result. We leave details of the proof of Theorem \ref{thm:QS} to the interested reader; see Remark \ref{rem:QS}.

\begin{theorem}[quasisymmetric parameterization]\label{thm:QS}
Let $1\leq m\leq n-1$ be integers. If $E\subseteq\RR^n$ is uniformly disconnected, then there exists a quasisymmetric embedding $f:\R^m \to \R^n$ such that $E \subseteq f(\R^m)$.
\end{theorem}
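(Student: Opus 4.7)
The plan is to adapt the inductive construction used in the proof of Theorem~\ref{thm:bilip}, replacing bi-Lipschitz bounds with their quasisymmetric analogues, while borrowing MacManus's treatment \cite{MM} of the $m=1$ case as the template for how the constants are controlled. Because quasisymmetric maps are permitted to distort Hausdorff dimension, the upper bound $\dim_A E < m$ that was critical for Theorem~\ref{thm:bilip} is no longer needed, and uniform disconnectedness alone suffices as a hypothesis.

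\textbf{Step 1 (tree structure).} Since $E$ is $c$-uniformly disconnected, at every dyadic scale $\rho^k \diam E$, for a suitable $\rho \in (0,1)$ depending on $c$, one can decompose $E$ into finitely many clusters whose diameters are at most $\rho^k \diam E$ and whose pairwise gaps are at least $c^{-1}\rho^k \diam E$. Iterating gives a tree $\mathcal T$ in the sense of Definition~\ref{def:tree} whose leaves cover $E$, with geometric parameters depending only on $c$ and with the number of children at each node bounded uniformly by a constant $N = N(c,n)$ (the latter because uniformly disconnected subsets of $\RR^n$ are doubling).

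\textbf{Step 2 (combinatorial embedding).} For each $F \in \mathcal T$, assign an $m$-dimensional dyadic-like cube $Q_F \subseteq \RR^m$ with $Q_{\Top(\mathcal T)} = [0,1]^m$, such that the cubes associated with the children of $F$ partition $Q_F$ into roughly equal pieces whose diameters and pairwise gaps are comparable up to a constant depending only on $c$ and $n$. Unlike the bi-Lipschitz setting, no quantitative matching between $\diam Q_F$ and $\diam F$ is required, so this combinatorial assignment can always be carried out regardless of $\dim_A E$.

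\textbf{Step 3 (interpolating surfaces and limit).} Starting from a fixed affine embedding $f_0$ of $[0,1]^m$ into $\RR^n$, inductively construct piecewise-affine homeomorphisms $f_k \colon \RR^m \to \RR^n$, where $f_{k+1}$ agrees with $f_k$ outside the level-$k$ cubes and sends each child subcube $Q_{F'}$ into a thin tube whose ``center point'' coincides with an anchor chosen inside $F'$, matching boundary values with $f_k$. Thanks to the separation in Step~1, the tubes at level $k+1$ can be chosen pairwise disjoint and small compared to the gaps, so $f_{k+1}$ remains a homeomorphism and, mirroring the control in MacManus's argument, the restriction $f_k|_{Q_F}$ is uniformly quasisymmetric with constants depending only on $c$ and $n$. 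The $f_k$ converge uniformly to a topological embedding $f \colon \RR^m \to \RR^n$ whose image contains every leaf of $\mathcal T$, hence $E$. To verify quasisymmetry, given $x,y,z \in \RR^m$ one locates the smallest common ancestor cube $Q_F$ and controls $|f(x)-f(z)|/|f(y)-f(z)|$ via the uniform quasisymmetric behavior of $f$ on $Q_F$ together with the gap estimates on children of $F$, producing the required distortion bound of the ratio $|x-z|/|y-z|$.

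\textbf{Main obstacle.} The hardest point is Step~3: designing the tube refinements in $m \geq 2$ dimensions so that every $f_k$ stays a global homeomorphism and the quasisymmetry constants of $f_k|_{Q_F}$ do not degenerate under unlimited refinement. MacManus's one-dimensional construction avoids self-intersection automatically by threading a curve through an interval, but for $m \geq 2$ one must carefully coordinate the combinatorial placement from Step~2 with the tube geometry from Step~3. This is precisely where the uniform separation provided by uniform disconnectedness, together with the uniform bound on branching, does the essential work.
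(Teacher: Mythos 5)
Your overall plan is the same as the paper's suggested route (see Remark \ref{rem:QS}): build the cluster tree coming from uniform disconnectedness, thread tubes through the clusters to produce a surface, and parameterize it by exploiting the fact that quasisymmetry is indifferent to the mismatch between the scale ratio in $\R^m$ and the scale ratio $\varepsilon$ in $\R^n$. Your identification of the crucial point---that only the uniform bound on branching $N=N(c,n)$ is needed, not the Assouad dimension bound of Theorem~\ref{thm:bilip}---is exactly right, and the place you locate the combinatorial work (Corollary~\ref{cor:decomp}/MacManus' Lemma~\ref{lem2}) is correct.

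There are, however, two genuine gaps between what you write and what would make this a complete argument. First, you propose to build the $m$-dimensional surface in $\R^n$ directly for all $1\leq m\leq n-1$, invoking a ``thin tube'' refinement in Step~3. But the entire tube machinery of the paper (Lemma~\ref{lem:blthick} and the surface $\mathcal S$ in Proposition~\ref{sec:tree}) produces $(n-1)$-dimensional boundary surfaces inside solid $n$-dimensional tubes; for $m<n-1$ this does not give an $m$-dimensional object, and a direct construction of an $m$-surface threading the tree is considerably harder than your paragraph suggests. The paper circumvents this entirely by running the codimension-one construction $n-m$ times and composing: a quasisymmetric $f_1:\R^{n-1}\to\R^n$ with $E\subseteq f_1(\R^{n-1})$ yields a uniformly disconnected $E_1:=f_1^{-1}(E)\subseteq\R^{n-1}$ (quasisymmetric maps preserve uniform disconnectedness), and one iterates, using that compositions of quasisymmetric maps are quasisymmetric. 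You should either adopt this reduction or supply the $m$-dimensional tube geometry explicitly. Second, your Step~1 tree has a single top cluster at scale $\diam E$, so your construction tacitly assumes $E$ is bounded; the theorem is stated for arbitrary uniformly disconnected $E\subseteq\R^n$. The paper handles unboundedness by applying the construction on an increasing exhaustion and passing to a limit via the Arzel\`a--Ascoli theorem for quasisymmetric maps (\cite[Corollary 10.30]{Heinonen}); this step is missing from your argument. With the codimension-one reduction and the unbounded-case limit added, your proposal matches the paper's intended proof.

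Two smaller points: in Step~2 you say the child cubes ``partition'' $Q_F$, but you actually need positive gaps between siblings and between a child and $\partial Q_F$ (as in the $M_{wi}'$ of Proposition~\ref{sec:tree}), so they cannot partition. And the bound $N=N(c,n)$ on branching comes from the volume-counting in Corollary~\ref{cor:decomp} (using that $\R^n$ is doubling and the manifolds are pairwise disjoint of comparable diameter), not from $E$ itself being doubling; the phrasing is slightly off but the conclusion is correct.
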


\begin{remark}
It is natural to ask to what extent do Theorems \ref{thm:param}, \ref{thm:bilip}, \ref{thm:biHolder}, and \ref{thm:QS} hold with other metric spaces. We leave this question open for future research.
\end{remark}

In the remainder of this section, we fix some basic notation used in our construction of the surfaces appearing in Theorems \ref{thm:param} and \ref{thm:bilip}. Section \ref{sec:holder} (the proof of Theorem \ref{thm:param}) and section \ref{sec:BLthm} (the proof of Theorem \ref{thm:bilip}) may be read independently of each other.

\subsection*{Cubes}\label{sec:thick}

Given $l\in(0,\infty)$ and integers $0\leq k \leq n$, a \emph{$k$-cube} of \emph{side length $l$}, $$\mathcal{K}:=I_1\times\cdots\times I_n\subseteq\RR^n,$$ is a product of bounded, closed intervals $I_i\subseteq\R$ such that the length $|I_i| = l$ for $k$ indices and $|I_i| = 0$ for $n-k$ indices. If $\mathcal{K}$ is an $n$-cube, then the topological boundary $\partial\mathcal{K}$ of $\mathcal{K}$ is the union of $2n$-many $(n-1)$-cubes, which are called the \emph{$(n-1)$-faces} of $\mathcal{K}$. In general, for all $1\leq k \leq n$, every $k$-face of $\mathcal{K}$ is the union of $2k$-many $(k-1)$-cubes, which are called the \emph{$(k-1)$-faces} of $\mathcal{K}$. The $0$-faces and $1$-faces of a cube $\mathcal{K}$ are commonly called the \emph{vertices} ane \emph{edges}, respectively, of $\mathcal{K}$. For each $(n-1)$-face $F$ of an $n$-cube $\mathcal{K}$, there is a unique face $\tilde{F}$ of $\mathcal{K}$ such that $$F\cap \tilde{F} = \emptyset;$$ we call $\tilde{F}$ the \emph{antipodal face} of $F$.

For all $x=(x^1,\dots,x^n)\in\RR^n$ and $r>0$, let $\mathcal{C}^n(x;r)\subseteq\RR^n$ denote the $n$-cube centered at $x$ of side length $2r>0$ with edges parallel to the coordinate axes; that is,
\[ \mathcal{C}^n(x;r) := [x^1-r,x^1+r]\times\cdots\times[x^{n}-r,x^{n}+r].\]
For all $x\in\RR^n$ and $0<r<R$, let $\mathcal{A}^n(x;r,R)\subseteq\RR^n$ denote the closed annular region between $\mathcal{C}^n(x;r)$ and $\mathcal{C}^n(x;R)$,
\[ \mathcal{A}^n(x;r,R) := \overline{\mathcal{C}^n(x;R)\setminus \mathcal{C}^n(x;r)}.\]

\subsection*{Grids}

For all $\d>0$, let $\mathscr{G}_{\d}$ denote the grid of cubes of side length $\delta$,
\[ \mathscr{G}_{\d} := \{ [m_1 \d,(m_1+1)\d] \times \cdots \times [m_n \d,(m_n+1)\d] : m_1,\dots,m_n\in\mathbb{Z}\}.\]
For all integers $0\leq k<n$, define the \emph{$k$-skeleton} of $\mathscr{G}_\delta$,
\[ \mathscr{G}_{\d}^k := \bigcup\left\{F:F\text{ is a $k$-face of some }\mathcal{K}\in\mathscr{G}_\delta\right\}. \]
For instance, $\mathscr{G}^0_{\d}$ is the set of all vertices of cubes in $\mathscr{G}_\delta$, while $\mathscr{G}^1_{\d}$ is the union of all edges of cubes in $\mathscr{G}_\delta$.

\begin{remark}[length bound]\label{rem:length}
Suppose that $M$ is a union of cubes in $\mathscr{G}_d$ with $\diam{M}=D\delta$ and $\g \subseteq \mathscr{G}_{\d}^1$ is an arc with endpoints in $\mathscr{G}_\delta^0$ such that $\g \subseteq M$. Then $M$ is formed from at most $D^n$ distinct cubes in $\mathscr{G}_{\d}$, each of which has $n 2^{n-1}$-many edges. Thus, since an arc contained in $\mathscr{G}^1_\delta$ traverses each edge at most once, \begin{equation*}\Haus^1(\g) \leq D^n2^{n-1}n\delta=C(n)\left[\delta^{-1}\diam M\right]^n\delta.\end{equation*}
\end{remark}

\subsection*{Tubes}

Given $0<\e\leq \d$ and an oriented polygonal arc $\g\subseteq \mathscr{G}^1_{\d}$ with initial endpoint $y_1 \in \mathscr{G}^0_{\d}$ and terminal endpoint $y_2\in \mathscr{G}^0_{\d}$, define
\[ \tube_{\varepsilon}(\g) = \bigcup\left\{ \mathcal{C}^n(x;\e/2):x\in \g\text{ and }\dist(x,\{y_1,y_2\})\geq \e/2\right\},\] the \emph{tube around $\gamma$} of \emph{width $\varepsilon$},
where the union is taken over all points $x\in \g$ whose distance from the endpoints of $\g$ are at least $\e/2$. Distinguishing between the initial endpoint $y_1$ and terminal endpoint $y_2$ of $\gamma$, we can split the topological boundary $\partial T$ of $T = \tube_{\e}(\g)$ into three distinguished pieces:
\begin{itemize}
\item $\entrance(T)$ is the $(n-1)$-cube in $\partial T$ of side length $\varepsilon$ that contains $y_1$;
\item $\exit(T)$ is the $(n-1)$-cube in $\partial T$ of side length $\varepsilon$ that contains $y_2$; and,
\item $\side(T)$ is closure of the remainder of $\partial T$, i.e. $$\side(T) := \overline{\partial T \setminus (\entrance(T) \cup \exit(T))}.$$
\end{itemize}

\begin{lemma}[straightening tubes]\label{lem:blthick}
Let $0<\varepsilon\leq\delta/(8\sqrt{n})$, let $\gamma \subseteq \mathscr{G}_\delta^1$ be a simple oriented polygonal arc with distinct endpoints in $\mathcal{G}_\delta^0$, and let $T$ denote $\tube_\varepsilon(\gamma)$. Then there exists $L=L(n,\delta^{-1}\Haus^1(\gamma))>1$ and an $L$-bi-Lipschitz orientation preserving map $$\phi: T \to [0,\mathcal{H}^1(\g)]\times [-\e/2,\e/2]^{n-1}$$ such that the restrictions of $\phi$ to $\entrance(T)$ and $\exit(T)$ are isometries with $$\phi(\entrance(T)) = \{0\}\times[-\e/2,\e/2]^{n-1},$$ $$\phi(\exit(T)) = \{\Haus^1(\g)\}\times[-\e/2,\e/2]^{n-1}.$$
\end{lemma}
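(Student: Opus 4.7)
The plan is to build $\phi$ piecewise by decomposing $T$ along the vertices of $\gamma$ and straightening the polygonal arc corner by corner. Let $y_1=v_0, v_1, \dots, v_N=y_2$ be the consecutive vertices of $\gamma$, so $N=\delta^{-1}\Haus^1(\gamma)$ and each segment $[v_i,v_{i+1}]$ has length $\delta$ along some coordinate direction $d_i$. Set $t_i=i\delta$. I would decompose $T$ into \emph{corner cubes} $K_i=v_i+[-\e/2,\e/2]^n$ at each interior vertex $v_i$ and \emph{edge tubes} $E_i$ consisting of the cubes $\mathcal{C}^n(x;\e/2)$ for $x\in[v_i,v_{i+1}]$ whose centers lie at distance at least $\e/2$ from each interior vertex of $\gamma$ (and from $\{y_1,y_2\}$ as appropriate). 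Correspondingly I decompose the target box $B=[0,\Haus^1(\gamma)]\times[-\e/2,\e/2]^{n-1}$ into $B_i^{\mathrm{corner}}=[t_i-\e/2,t_i+\e/2]\times[-\e/2,\e/2]^{n-1}$ and $B_i^{\mathrm{edge}}=[t_i+\e/2,t_{i+1}-\e/2]\times[-\e/2,\e/2]^{n-1}$. The hypothesis $\e\le\delta/(8\sqrt n)$ guarantees that corner cubes are pairwise well-separated by edge tubes.

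Next I would define $\phi$ on each edge tube $E_i$ as an orientation-preserving isometry onto $B_i^{\mathrm{edge}}$: a rotation $R_i\in SO(n)$ sending $d_i$ to the first coordinate direction, followed by a translation aligning centerlines. For $i=0$ and $i=N-1$ I would use the residual rotational freedom about the first coordinate axis to arrange additionally that $\entrance(T)$ and $\exit(T)$ map \emph{isometrically} onto $\{0\}\times[-\e/2,\e/2]^{n-1}$ and $\{\Haus^1(\gamma)\}\times[-\e/2,\e/2]^{n-1}$; no interior corner interferes because $y_1,y_2$ are not interior vertices.

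The main obstacle is the map on each corner cube $K_i$. The two faces of $K_i$ meeting $E_{i-1}$ and $E_i$ are \emph{adjacent} faces of $K_i$ (sharing an $(n-2)$-face, since $d_{i-1}\perp d_i$), whereas their target faces in $B_i^{\mathrm{corner}}$ are \emph{opposite} $(n-1)$-faces. Equivalently, $\partial T$ has a reflex dihedral angle of $3\pi/2$ along the inner concave $(n-2)$-face at $v_i$, while $\partial B$ is flat there, so the two edge-tube isometries produce inconsistent boundary values on the shared $(n-2)$-face of $K_i$; no isometry of $K_i$ can mediate this, and genuine bi-Lipschitz unfolding is required. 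I would construct the unfolding using cylindrical coordinates $(r,\theta,z)$ adapted to the inner concave $(n-2)$-edge of $\partial T\cap K_i$ (with $z$ parameterizing that edge and $(r,\theta)$ polar in the orthogonal $2$-plane): the map $(r,\theta,z)\mapsto(r,\tfrac{2}{3}\theta,z)$ compresses the $3\pi/2$-sector to a $\pi$-sector, after which a rigid motion positions the image in $B_i^{\mathrm{corner}}$. In a thin transition layer of thickness $\sim\e$ at the end of each adjacent edge tube, I would interpolate between this polar unfolding and the rigid edge-tube isometry via a piecewise linear map; since $\e\le\delta/(8\sqrt n)$, the transition layers stay well inside $E_{i-1}$ and $E_i$ and never meet a neighboring corner. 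Each local construction is bi-Lipschitz with constant depending only on $n$.

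Finally I would glue the pieces. Continuity on shared faces holds by construction, and for a global bi-Lipschitz constant I would bound the number of cells meeting any straight segment in $T$ by a function of $n$ and $N=\delta^{-1}\Haus^1(\gamma)$ (using that consecutive cells share an $(n-1)$-face and that $T$ is quasiconvex with constant controlled by $N$) and chain the local estimates, producing the claimed $L=L(n,\delta^{-1}\Haus^1(\gamma))$. Orientation is preserved by the choices $R_i\in SO(n)$ and by the orientation-preserving angular rescaling $\theta\mapsto\tfrac{2}{3}\theta$. The principal difficulty, and the only step requiring genuine distortion, is the corner-cube construction: unfolding the $3\pi/2$ reflex angle at each interior vertex so that the two edge-tube isometries—forced to land on \emph{opposite} rather than \emph{adjacent} faces of the target corner cube—can be joined continuously.
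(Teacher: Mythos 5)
Your overall strategy---cut $T$ along $\gamma$ into canonical pieces, straighten each piece, glue, then verify a global bi-Lipschitz bound by separating the near and far cases---is the same as the paper's. But the \emph{location} of the cuts differs, and this creates a genuine problem. You cut at the vertices $v_i$, assigning to each a cube $K_i=v_i+[-\e/2,\e/2]^n$ whose interfaces with $E_{i-1}$ and $E_i$ are two \emph{adjacent} faces of $K_i$ sharing the concave $(n-2)$-edge of $\partial T$. You correctly observe that the corresponding target faces of $B_i^{\mathrm{corner}}$ are \emph{opposite}, hence disjoint, and you phrase the obstruction as ``no isometry of $K_i$ can mediate this.'' In fact the obstruction is topological, not metric: a homeomorphism $\phi:K_i\to B_i^{\mathrm{corner}}$ is single-valued, so it must send the shared $(n-2)$-edge into a single set contained in both disjoint target faces, which is impossible. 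No unfolding of any kind can produce $\phi(K_i)=B_i^{\mathrm{corner}}$ with the boundary correspondence you prescribe. Your proposed remedy (polar compression $(r,\theta,z)\mapsto(r,\tfrac23\theta,z)$ around the reflex edge, with transition layers bleeding into the edge tubes) implicitly abandons the piece-to-piece correspondence: the image of the cube $K_i$ under an angular compression centered at one of its corners is not a rectangular box, and the transition layers alter the map on the ends of $E_{i\pm1}$ as well, so neither $\phi(K_i)=B_i^{\mathrm{corner}}$ nor $\phi(E_i)=B_i^{\mathrm{edge}}$ holds. A related symptom: with $E_i$ defined via cubes at distance $\ge\e/2$ from $v_i$, the source pieces $K_i$ and $E_i$ actually overlap in a slab of thickness $\e/2$, while the target pieces meet only in a face.

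The paper avoids all of this by cutting at the midpoints of the grid edges rather than at the vertices: each piece $\tube_\e(\gamma^j)$ is a tube around either a half-edge ($\Gamma_1$, already a box) or an L-shaped pair of half-edges ($\Gamma_2$), so every corner of $\partial T$---both the reflex $3\pi/2$ edge and the convex $\pi/2$ edge---lies in the \emph{interior} of a single L-tube, and the interfaces between consecutive pieces are flat $(n-1)$-cubes a distance $\sim\delta/2$ away from any corner. The paper then appeals to the finitely many ``simple geometric configurations'' to get an $L_0(n)$-bi-Lipschitz straightening of each L-tube, isometric on the two end cross-sections. Your polar-compression idea is a reasonable candidate for the hidden content of that phrase, but to make it explicit you would need to apply it to the L-tube (not the corner cube), unfold the convex $\pi/2$ corner as well as the reflex $3\pi/2$ corner, and follow with a radial correction so that the image is a genuine box; the cuts must be kept away from the corners for the interfaces to match. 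With the vertex-centered decomposition as written, the argument does not go through.
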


\begin{figure}
\begin{center}\includegraphics[width=.7\textwidth]{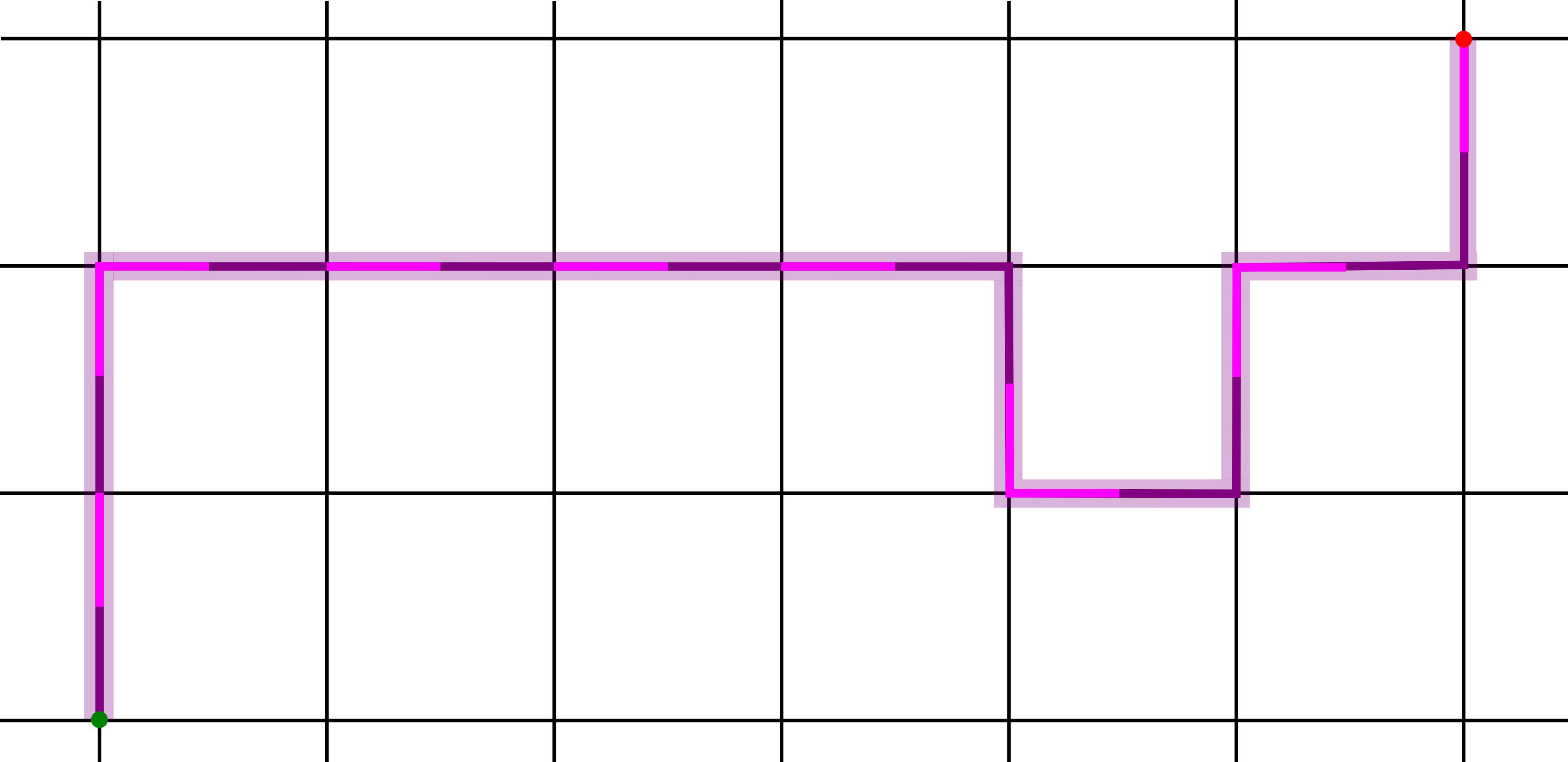}\end{center}
\caption{Proof of Lemma \ref{lem:blthick}}\label{fig:tube-proof}
\end{figure}

\begin{proof}
We start by breaking up $T$ into canonical tubes. Consider the two arcs
$${\G_1} := [0,\tfrac12]\times\{0\}^{n-1}\quad\text{and}\quad {\G_2} := ([0,\tfrac12]\times\{0\}^{n-1})\cup(\{\tfrac12\}\times [0,\tfrac12]\times\{0\}^{n-2}).$$
Then $\gamma$ can be written as a concatenation of consecutive arcs (see Figure \ref{fig:tube-proof}) $\gamma^1, \dots,\gamma^m$ such that for each $i=1,\dots,m$ either $\gamma^{i}$ is congruent to $\Gamma_{1}$ and $\mathcal{H}^1(\gamma^i) = \tfrac12\delta$ or $\gamma^{i}$ is congruent to $\Gamma_{2}$ and $\mathcal{H}^1(\gamma^i) = \d$. Thus, we can decompose $T$ as an almost disjoint union of consecutive tubes $$\tube_\varepsilon(\gamma^1),\quad\tube_\varepsilon(\gamma^2),\quad \dots,\quad \tube_{\varepsilon}(\gamma^{m-1}),\quad \tube_\varepsilon(\gamma^m),$$ intersecting in  $(n-1)$-cubes congruent to $\{0\}\times[-\varepsilon/2,\varepsilon/2]^{n-1}$ (see Figure \ref{fig:tube-proof}).

In view of the possible, simple geometric configurations, there exists $L_0=L_0(n)>1$ such that for each $\tube_\varepsilon(\gamma^j)$, there exists an $L_0$-bi-Lipschitz map
$$\phi_j:\tube_\varepsilon(\gamma^j)\rightarrow [\lambda_{j-1},\lambda_j]\times[-\varepsilon/2,\varepsilon/2]^{n-1},$$ where $\lambda_0=0$ and $\lambda_{j}-\lambda_{j-1}=\Haus^1(\gamma^j)$ for all $1\leq j\leq m$. Moreover, working sequentially, we are plainly free to choose the maps so that
\begin{itemize}
\item $\phi_j$ maps $\entrance(\tube_\varepsilon(\gamma^j))$ and $\exit(\tube_\varepsilon(\gamma^j))$ isometrically onto $$\{\lambda_{j-1}\}\times [-\varepsilon/2,\varepsilon/2]^{n-1}\quad\text{and}\quad \{\lambda_{j}\}\times [-\varepsilon/2,\varepsilon/2]^{n-1},$$ respectively, for all $1\leq j\leq m$, and
     \item $\phi_{j}|_{\entrance(\tube_\varepsilon(\gamma^{j}))}=\phi_{j-1}|_{\exit(\tube_\varepsilon(\gamma^{j-1}))}$ for all $2\leq j\leq m$.
    \end{itemize}
 Thus, we can define a map $$\phi:\tube_\varepsilon(\gamma)\rightarrow [0,\Haus^1(\gamma)]\times[-\varepsilon/2,\varepsilon/2]^{n-1}$$ by setting $\phi|_{\tube_\varepsilon(\gamma^j)}=\phi_j$ for all $1\leq j\leq m$.

To see that $\phi$ is bi-Lipschitz, let $x,y\in T$. If there exists $j\in\{2,\dots,m\}$ such that $x,y \in \tube_\varepsilon(\gamma^{j-1})\cup \tube_\varepsilon(\gamma^j)$, then---once again---in view of the possible, simple geometric configurations, the restriction of $\phi$ to $\tube_\varepsilon(\gamma^j)\cup \tube_\varepsilon(\gamma^j)$ is $L_1$-bi-Lipschitz for some $L_1 = L_1(n)>1$. Alternatively, if $x\in \tube_\varepsilon(\gamma^j)$ and $y \in \tube_\varepsilon(\gamma^i)$ for some $j>i+1$, then
 \[ |x-y| \geq \gap(\tube_\varepsilon(\gamma^j),\tube_\varepsilon(\gamma^i)) \geq \gap(\g^{i},\g^{j}) - \sqrt{n}\e \geq \delta/2 - \delta/4= \delta/4,\] because $\gamma$ has distinct endpoints and $0<\varepsilon\leq\delta/(8\sqrt{n})$.  Hence $$|\phi(x)-\phi(y)| \leq \sqrt{\Haus^1(\gamma)^2+n\varepsilon^2} < 1.5\Haus^1(\gamma)\leq \frac{6}{\delta}\Haus^1(\gamma)|x-y|,$$ again because $0<\varepsilon\leq\delta/(8\sqrt{n})$. A similar argument shows that $|\phi(x)-\phi(y)|\geq \delta/2$ and $$|x-y| \leq \frac{3}{\delta}\Haus^1(\gamma)|\phi(x)-\phi(y)|.$$
Therefore, $\phi$ is $L$-bi-Lipschitz for some $L=L(n, \d^{-1}\mathcal{H}^1(\g))$.
\end{proof}

\section{Proof of Theorem \ref{thm:param} (H\"older surfaces)}\label{sec:holder}

We first treat the case that $m=n-1$ of Theorem \ref{thm:param} in Proposition \ref{prop:n-1}. Afterwards, we derive the proof of Theorem \ref{thm:param} in general codimension.

\begin{proposition}[{H\"older parametrization in codimension one}]\label{prop:n-1}
For every choice of $n\geq 2$, $s\in[n-1,n)$, $C>1$, and $0\leq \beta <s$, there exist constants $$C'(n,s,C,\beta)>1\quad\text{and}\quad L = L(n,s,C,\beta)>1$$ such that for every $(C,\beta)$-homogeneous, closed set $E\subseteq\RR^n$, there exists a $(C',(n-1)\beta/s)$-homogeneous, closed set $E'\subseteq \R^{n-1}$ and a $(n-1)/s$-H\"older map $f:E' \to E$ such that $$\Hold_{(n-1)/s} f \leq L\quad\text{and}\quad f(E') = E.$$
\end{proposition}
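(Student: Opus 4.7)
The plan is to construct $E'\subseteq\RR^{n-1}$ and the surjection $f\colon E'\to E$ simultaneously via matched hierarchical Cantor-like decompositions of $E$ and its parameter space. After rescaling, assume $E\subseteq[0,1]^n$. Since $\beta<s$, the exponent $(n-1)(s-\beta)/s$ is positive, so one may fix a large integer $M=M(n,s,C,\beta)$ satisfying
\begin{equation*}
 4^{n-1}\,C\cdot M^{(n-1)\beta/s}\leq M^{n-1},
\end{equation*}
and set scales $\tau_k:=M^{-k}$ in $\RR^{n-1}$ and $r_k:=\tau_k^{(n-1)/s}=M^{-k(n-1)/s}$ in $\RR^n$.

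\textbf{Trees.} Build a tree $\mathcal{T}=\bigsqcup_{k\geq 0}\mathcal{T}_k$ of subsets of $E$ in which each $\mathcal{T}_k$ is a cover of $E$ by sets of diameter at most $r_k$ with $|\mathcal{T}_k|\leq C\,r_k^{-\beta}$, and every $V\in\mathcal{T}_k$ is assigned a parent $V^\uparrow\in\mathcal{T}_{k-1}$ with $V\cap V^\uparrow\neq\emptyset$. Applying $(C,\beta)$-homogeneity to each $U\in\mathcal{T}_{k-1}$ at relative scale $r_k/r_{k-1}=M^{-(n-1)/s}$ shows that $U$ has at most $CM^{(n-1)\beta/s}$ children. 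In parallel, assign to every $V\in\mathcal{T}$ a cube $V^*\subseteq\RR^{n-1}$: to the root associate $[0,1]^{n-1}$, and inductively, given the cube $U^*$ of side $\tau_{k-1}$ for $U\in\mathcal{T}_{k-1}$, subdivide $U^*$ into an $M^{n-1}$-grid of subcubes of side $\tau_k$ and select an ``every other'' subfamily, yielding at least $(M/2)^{n-1}$ pairwise disjoint subcubes any two of which are at distance $\geq\tau_k$. By the choice of $M$, this count bounds the number of children of $U$, so the children of $U$ can be injected into distinct gap-separated subcubes. Define
\begin{equation*}
 E':=\bigcap_{k\geq 0}\,\bigcup_{V\in\mathcal{T}_k} V^*,
\end{equation*}
a closed subset of $[0,1]^{n-1}$.

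\textbf{The map $f$.} Each $x'\in E'$ lies in a unique nested sequence $V_0^*\supseteq V_1^*\supseteq\cdots$, which corresponds to an infinite branch $(V_k)$ of $\mathcal{T}$; since $\diam V_k\leq r_k\to 0$ and the closures $\overline{V_k}$ share points with $E$ (a closed set), the intersection $\bigcap_k\overline{V_k}\cap E$ is a single point, which I declare to be $f(x')$. Surjectivity onto $E$ is routine: each $y\in E$ determines an infinite branch in $\mathcal{T}$ by a deterministic choice of containing cell at each level, and any point in the nonempty nested intersection of the corresponding $\RR^{n-1}$-cubes maps to $y$. For the H\"older bound, let $x',y'\in E'$ be distinct and let $k$ be the largest integer with $x',y'$ in a common assigned cube $U^*$ of level $k$; then $f(x'),f(y')\in\overline{U}$ gives $|f(x')-f(y')|\leq r_k$, while the gap property forces $|x'-y'|\geq\tau_{k+1}$, so
\begin{equation*}
 |f(x')-f(y')|\leq r_k = M^{(n-1)/s}\,\tau_{k+1}^{(n-1)/s}\leq M^{(n-1)/s}\,|x'-y'|^{(n-1)/s}.
\end{equation*}
Hence $\Hold_{(n-1)/s} f\leq L:=M^{(n-1)/s}$, depending only on $n,s,C,\beta$.

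\textbf{Homogeneity of $E'$ and main obstacle.} For bounded $A\subseteq E'$ and $\delta\in(0,1)$, let $k_0$ be the deepest level with $A\subseteq U^*$ for some $U\in\mathcal{T}_{k_0}$ (so $\diam A\asymp\tau_{k_0}$) and let $j\geq 1$ satisfy $\tau_{k_0+j}\leq\delta\diam A<\tau_{k_0+j-1}$. Covering $A$ by the level-$(k_0+j)$ assigned cubes descended from $U^*$, one-shot $(C,\beta)$-homogeneity of $U$ at relative scale $r_{k_0+j}/r_{k_0}=M^{-j(n-1)/s}$ bounds the number of such descendants by $CM^{j(n-1)\beta/s}\lesssim\delta^{-(n-1)\beta/s}$, while each covering cube has diameter $\sqrt{n-1}\tau_{k_0+j}\lesssim\delta\diam A$. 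This verifies $(C',(n-1)\beta/s)$-homogeneity of $E'$ with $C'=C'(n,s,C,\beta)$. The main technical obstacle is engineering a uniformly positive gap $\tau_{k+1}$ between siblings in $\RR^{n-1}$—without this, the H\"older lower bound on $|x'-y'|$ collapses and $f$ need not extend continuously to the Cantor limit. The hypothesis $\beta<s$ enters decisively at exactly this point, providing the volumetric surplus $(M/2)^{n-1}\geq CM^{(n-1)\beta/s}$ that makes the ``every other'' selection of subcubes possible for $M$ sufficiently large.
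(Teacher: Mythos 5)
Your blueprint for the bounded case matches the paper's \textsc{Step 1} in spirit: a matched pair of nested decompositions, gap-separated parameter cubes, and a limit map read off along branches. The H\"older estimate and surjectivity arguments are sound. However, there are two genuine gaps.

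First, you write ``after rescaling, assume $E\subseteq[0,1]^n$,'' but the proposition is stated for \emph{every} closed $(C,\beta)$-homogeneous set $E\subseteq\RR^n$, including unbounded ones, and the iteration in the proof of Theorem~\ref{thm:param} genuinely uses the unbounded case (the intermediate sets $E_i$ produced there are unbounded). The paper's \textsc{Step 2} is a nontrivial exhaustion-and-gluing argument to handle this; your proposal omits it entirely.

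Second, your verification of $(C',(n-1)\beta/s)$-homogeneity of $E'$ is not correct as written. Your tree is built by applying abstract $(C,\beta)$-homogeneity covers generation-by-generation: each $U\in\mathcal{T}_{k-1}$ is covered by at most $CM^{(n-1)\beta/s}$ children. Iterating this $j$ times bounds the number of level-$(k_0+j)$ descendants of $U$ only by $\bigl(CM^{(n-1)\beta/s}\bigr)^{j}=C^{j}M^{j(n-1)\beta/s}$; the factor $C^{j}$ is fatal, since your target bound must be $\lesssim\delta^{-(n-1)\beta/s}$ uniformly as $j\to\infty$. You invoke ``one-shot'' homogeneity of $U$ at relative scale $M^{-j(n-1)/s}$, but the sets in that one-shot cover are \emph{not} your tree cells at depth $j$ — the tree was constructed inductively, and the covers at different depths need not be nested refinements of one another. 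The paper sidesteps exactly this by making the $\RR^n$-side decomposition literally $\varepsilon^{-1}$-adic cubes, with $\varepsilon^{-1}\in\N$: then the depth-$j$ descendants of $M_w$ are precisely the grid-$\varepsilon^{|w|+j}$ subcubes of $M_w$ meeting $E$, and a \emph{single} application of the homogeneity-derived bound on cubes hitting $E$ gives the count $\leq C_0\varepsilon^{-j\beta}$ with no $C_0^{j}$ loss. You put the integer subdivision ratio $M$ on the $\RR^{n-1}$ parameter side instead, which costs you this telescoping. (The $\RR^{n-1}$ side does not need integer ratios; the paper's side lengths $\varepsilon^{|w|/\alpha}$ are not integer powers and the gap-separated subcube selection still goes through.) Your assertion $|\mathcal{T}_k|\leq Cr_k^{-\beta}$ is also incorrect for the per-parent-built tree, but that bound is never used, so it is harmless. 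To repair the argument: carry out the $\RR^n$-side decomposition with a nested integer-ratio cubical grid, and add the exhaustion argument for unbounded $E$.
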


\begin{proof}[{Proof of Proposition \ref{prop:n-1}}] Let us abbreviate $(n-1)/s=:\alpha$.
Since $E$ is $(C,\beta)$-homogeneous, there exists $C_0 = C_0(n,C,\beta)$ with the following property: \begin{quotation}\emph{For all $k\in\N$, $x\in\R^n$, and $r>0$, if the cube $Q:=\mathcal{C}^n(x;r)$ is divided into $k^n$ essentially disjoint subcubes $Q_1,\dots,Q_{k^n}$ of side length $2r/k$, then
\begin{equation*}\card{\{i\in\{1,\dots,k^n\} : Q_i\cap E \neq \emptyset\}} \leq C_0 k^{\beta}. \end{equation*}}\end{quotation}%
Fix a number $\e \in (0,1)$ so that $\e^{-1}$ is an integer and
\[ \e \leq (3^{n-1}C_0)^{\frac1{\beta-s}}. \] Finally, set $N=1+\lfloor C_0 \e^{-\beta} \rfloor$. We split the argument into two steps.

\bigskip\noindent\textsc{Step 1.} Assume that $E$ is bounded. Composing with appropriate similarities on the domain and target of $f$, we may assume that $E\subseteq [-1,1]^n$. Our goal is to construct a $(C',\beta\alpha)$-homogeneous, closed set $E'\subseteq [-1,1]^{n-1}$ and a $\alpha$-H\"older map $f:E' \to \R^n$ with $f(E') = E.$

Define $M_{\emptyset} := [-1,1]^n$. By way of induction, assume that a cube $M_w$ has been defined for some finite word $w$ with letters drawn from $\{1,\dots,N\}$. Divide $M_w$ into $\e^{-n}$-many subcubes with side lengths $2\e^{|w|+1}$ and mutually disjoint interiors; label the subcubes that intersect $E$ as $M_{w1},\dots,M_{wN_{w}}$. Note that $N_w\leq N$. Let $\mathcal{W}$ denote the set of all words, for which $M_w$ has been defined.

Define $M_{\emptyset}' = [-1,1]^{n-1}$. Inductively, given a cube $M_w' = \mathcal{C}^{n-1}(z_w;\e^{|w|/\alpha})$, the upper bound of $N_w$ and the choice of $\e$ allow us to find cubes $$M_{wi}' = \mathcal{C}^{n-1}(z_{wi};\e^{(|w|+1)/\alpha})\subseteq M_w'\quad(i\in\{1,\dots,N_w\})$$ such that for all distinct $i,j \in \{1,\dots,N_w\}$,
\[ \gap(M_{wi}',M_{wj}') \geq \e^{(|w|+1)/\alpha}.\]
Set $$E' := \bigcap_{k\in\N}\bigcup_{w\in\W, |w|=k}M'_w.$$ Then $E'$ is compact and $(C',\alpha\beta)$-homogeneous for some $C' = C'(n,s,C,\beta)$.

For each $x \in E'$, there exists a \emph{growing} sequence $(w_k)$ of words in $\W$ (i.e.~for each $k$, the word $w_{k+1}=w_ki$ for some $i\in\{1,\dots, N\}$) such that $$\bigcap_{k\in\N} M_{w_k}'=\{x\}.$$  Given a point $x\in E'$ and an associated sequence $(w_k)$, define $f(x)$ to be the unique point in $\bigcap_{k\in\N} M_{w_k}$. To show that $f$ is $\alpha$-H\"older continuous, fix $x,y\in E'$ and let $w$ denote the longest word such that $x,y\in M_{w}'$. Then
\[ |f(x)-f(y)| \leq \diam{M_w} = 2\sqrt{n}\e^{|w|} \leq \e^{-1} (\gap(M_{wi}',M_{wj}'))^{\alpha} \leq \e^{-1} |x-y|^{\alpha}.\] Thus, $f$ is $\alpha$-H\"older continuous and $f(E')\subseteq E$. In fact, because every point in $E$ can be represented as the unique point in $\bigcap_{k\in\N} M_{w_k}$ for some growing sequence $(w_k)$ of words in $\mathcal{W}$, we have $f(E')=E$.

\bigskip\noindent\textsc{Step 2.} Assume that $E$ is unbounded. For each $k\geq 0$, let $$E_k := E \cap [-\e^{-k},\e^{-k}]^n,$$ where $\e$ continues to denote the parameter chosen above. By adding the origin to the set $E$ if necessary, we may assume that $E_k\neq \emptyset$ for all $k\geq 0$. Each set $E_k$ is $(C,\beta)$-homogeneous, because $E$ is $(C,\beta)$-homogeneous and homogeneity is inherited by subsets.

By \textsc{Step 1}, there exists a $(C',\alpha\beta)$-homogeneous set $E_0' \subseteq \R^{n-1}$, a constant $L_0=L_0(n,s,C,\beta)>1$, and a $\alpha$-H\"older continuous map $f_0:E_0 \to \R^n$ such that $$\Hold_{\alpha} f \leq L_0\quad\text{and}\quad f_0(E_0') = E_0.$$

Inductively, suppose that for some $k\geq 0$, we have defined a $(C',\alpha\beta)$-homogeneous set $E_k' \subseteq [-\e^{-k/\alpha},\e^{-k/\alpha}]^{n-1}$ and a $\alpha$-H\"older map $f_k:E_k' \to \R^n$ such that $\Hold_{\alpha} f \leq L$ and $f_k(E_k') = E_k$. Divide $Q_{k+1} = [-\e^{-k-1},\e^{-k-1}]^n$ into $\e^{-n}$-many cubes with mutually disjoint interiors and side lengths $2\e^{-k}$ and denote by $\mathscr{Q}_{k+1}$ this collection of cubes. Let $Q_{k,1},\dots Q_{k,m_k}$ be those cubes in $\mathscr{Q}_{k+1}$ that intersect with $E$. Set $Q_{k,1} = [-\e^{-k},\e^{-k}]^n$. Since $m_k\leq N$, we can find, cubes $Q_{k,1}', \dots, Q_{k,m_k}'$ in $Q_{k+1}' = [-\e^{-(k+1)/\alpha},\e^{-(k+1)/\alpha}]^{n-1}$ such that $Q_{k,1}' = [-\e^{-k/\alpha},\e^{-k/\alpha}]^{n-1}$ and
\[\gap(Q_{k,i}',Q_{k,j}') \geq (2\e)^{-k/\alpha} \qquad\text{for }i\neq j.\]

Set $E_{k,1}' = E_k'$. For each $i\in\{2,\dots,m_k\}$ (if any) let $\zeta_{k,i}$ be a similarity of $\R^n$ that maps $Q_{k,i}$ onto $[-1,1]^n$ and $\zeta_{k,i}'$ be a similarity of $\R^{n-1}$ that maps $Q_{k,i}'$ onto $[-1,1]^{n-1}$. Let also $E_{k,i}''$ and $g_{k,i} : E_{k,i}'' \to \R^n$ be the $(C',\alpha\beta)$-homogeneous subset of $[-1,1]^{n-1}$ and $\alpha$-H\"older map, respectively, of Case 1 for $\zeta_{k,i}(Q_{k,i}\cap E)$. Set $E_{k,i}' = (\zeta_{k,i}')^{-1}(E_{k,i}'')$ and
\[  E_{k+1}' = \bigcup_{i=1}^{m_k} E_{k,i}'. \]
Define $f_{k+1}: E_{k+1}' \to \R^n$ with $f_{k+1}|_{E_{k,1}'} = f_k|_{E_{k,1}'}$ and for $i=2,\dots,m_k$ (if any)
\[f_{k+1}|_{E_{k,i}'} = (\zeta_{k,i})^{-1}\circ g_{k,i}\circ\zeta_{k,i}'|_{E_{k,i}'}.\]
It is easy to see that the map $f_{k+1}: E_{k+1}' \to \R^n$ is $\alpha$-H\"older with $\Hold_{\alpha} f = L_1(n,C,\beta,s)$. Furthermore, the set $E_{k+1}'$ is $(C_1,\alpha\beta)$-homogeneous for some constant $C_1=C_1(n,C,\beta,s)$.

Thus, we construct a nested sequence of $(C_1,\alpha\beta)$-homogeneous sets $E_1'\subseteq E_2'\subseteq \cdots$ in $\R^{n-1}$ and a sequence of $\alpha$-H\"older maps $f_k : E_k' \to \R^n$ such that for each $k\in\N$,
\[ \Hold_{\alpha} f_k = L_1,\quad f_{k+1}|_{E_k'} = f_k|_{E_k'}\quad \text{and } f_{k}(E_k') = E_k.\]
Set  $E' = \bigcup_{k\in\N}E_k$. Since the H\"older constant is uniform, the sequence $f_k$ converges uniformly on compact sets to a map $f: E'\to \R^n$ which is $\alpha$-H\"older. Moreover, $E'$ is $(C_1,\alpha\beta)$-homogeneous and $f(E') = E$.
\end{proof}

Now we are ready to show Theorem \ref{thm:param}.

\begin{proof}[{Proof of Theorem \ref{thm:param}}]
Let us abbreviate $m/s=:\alpha$. Since H\"older maps extend to the closure of their domains with the same H\"older constant, and since $\dim_A(\overline{E}) = \dim_A(E)$ we may assume for the rest that $E$ is closed. By McShane's extension theorem (see e.g. \cite[VI 2.2, Theorem 3]{Stein}), it is enough to construct a set $E'\subseteq \R^m$ and a $\a$-H\"older map $f:E' \to E$ such that $E = f(E')$. Let $k=\lceil s \rceil$ be the smallest integer, bigger or equal to $s$. Fix $\beta\in(k-1,s)$ such that $\beta > \dim_A(E)$. Then there exists $C>1$ such that $E$ is $(C,\beta)$-homogeneous.

If $k<n$ then by Proposition \ref{prop:n-1} there exists a $\beta$-homogeneous set $A_1\subseteq \R^{n-1}$ and a Lipschitz surjective map $g_1:A_1\to E$. Proceeding inductively, for $i=2,\dots,n-k$ there exists $\beta$-homogeneous sets $A_i\subseteq\R^{n-i}$ and Lipschitz surjective maps maps $g_i:A_{i} \to A_{i-1}$. Thus, $g=g_1\circ\cdots\circ g_{n-m}$ is a Lipschitz map of $A_{n-k}\subseteq \R^k$ onto $E$ and it remains to produce a $\alpha\beta$-homogeneous set $E''\subseteq \R^m$ and a $\alpha$-H\"older surjective map $f:E'' \to A_{n-k}$. Hence, we may assume for the rest that $k=n$.

Suppose that $k=n$. Fix a number $\a_1\in(0,1)$ such that
\[ \frac{n-1}{s} < \a_1 <\frac{n-1}{\beta}.\]
Inductively, assuming we have defined numbers $\a_1,\dots,\a_{i}\in (0,1)$ for some $i\in\{1,\dots,n-m-1\}$, fix a number $a_{i+1}\in(0,1)$ such that
\[ \frac{n-i-1}{n-i} < \a_{i+1} < \min \left \{1, \frac{n-i-1}{\a_1\cdots \a_{i}\beta}\right \}.\]
An induction on $i$ shows that each $\a_i$ is well defined and that for each $i\in\{1,\dots,n-m\}$
\[\frac{n-i}{s} < \a_1\cdots \a_i < \frac{n-i}{\beta}.\]
Applying Proposition \ref{prop:n-1}, there exists a $\a_1\beta$-homogeneous set $E_1 \subseteq \R^{n-1}$ and a $\a_1$-H\"older map $f_1:\R^{n-1} \to \R^n$ with $f_1(E_1)= E$. Inductively, for $i =2,\dots,n-m$ there exists a $\a_1\cdots \a_{i}\beta$-homogeneous set $E_{i}\subseteq \R^{n-i}$ and a $\a_{i}$-H\"older map $f_{i}: E_{i} \to E_{i-1}$ such that $f_{i}(E_{i}) = E_{i-1}$. Let $\phi : \R^{m} \to \R^m$ with
\[ \phi(x) = |x|^{\a/(\a_1\cdots \a_{n-m}) -1}x.\]
Set $E'=\phi^{-1}(E_{n-m})$ and $f:E' \to E$ with
\[ f =  f_1\circ \cdots \circ f_{n-m} \circ \phi|_{E'}.\]
Since $\phi$ is $\a/(\a_1\cdots \a_{n-m})$-H\"older, it follows that $f$ is $\a$-H\"older and $f(E') = E$.
\end{proof}

\section{Proof of Theorem \ref{thm:bilip} (bi-Lipschitz surfaces)}\label{sec:BLthm}

To lay the groundwork for the proof of Theorem \ref{thm:bilip}, we first recall MacManus' cubical approximation of uniformly disconnected sets.
For every compact set $E\subseteq\RR^n$ and $\delta>0$, let $\mathcal{D}_{\d}(E)$ denote the collection of $n$-manifolds with boundary $M^n\subseteq\RR^n$ such that
$$\partial M \subseteq \mathscr{G}_{\delta}^{n-1}\quad\text{and}\quad \gap(\partial M,E)=\inf_{x\in\partial M}\inf_{y\in E}|x-y| \geq \d,$$ where $\mathscr{G}_\delta^{n-1}$ denotes the union of $(n-1)$-faces of cubes of side length $\delta$ defined in \S\ref{s:surfaces}.
MacManus stated and proved the following lemma in the case $n=2$ on \cite[p.~272]{MM}, and then commented on the general case in the first paragraph of \cite[p.~276]{MM}.

\begin{lemma}[{\cite[Lemma 2.3]{MM}}]\label{lem2}
For all $n\geq 2$ and $c>1$, there exists a constant $C = C(n,c)>1$ with the following property. If $E\subseteq \R^n$ is compact and $c$-uniformly disconnected, then for all $\delta>0$, there exists a finite collection $\mathcal{M}\subseteq \mathcal{D}_\delta(E)$ of pairwise disjoint manifolds intersecting $E$ such that
\begin{equation*} E \subseteq \bigcup_{M\in\mathcal{M}}{M},\end{equation*}
\begin{equation*}\delta \leq \diam{M} \leq C \delta\quad\text{for all }M\in\mathcal{M},\text{ and}\end{equation*}
\begin{equation*}\delta \leq \dist(x,E) \leq C \delta\quad\text{for all $M\in\mathcal{M}$ and $x\in \partial M$}.\end{equation*}
\end{lemma}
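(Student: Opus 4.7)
The plan is to use $c$-uniform disconnectedness to partition $E$ into finitely many pairwise well-separated pieces of controlled diameter, and then to thicken each piece to a cubical neighborhood inside the grid $\mathscr{G}_\delta$. A coarse scale $r := C_1\delta$, with $C_1 = C_1(n,c)$ to be fixed, mediates between the scale of the partition and the fine scale $\delta$ of the cubes.

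\emph{Step 1 (partition).} For each $x \in E$, choose $E_{x,r} \subseteq E$ as supplied by $c$-uniform disconnectedness at scale $r$: $x \in E_{x,r}$, $\diam E_{x,r} \leq r$, and $\gap(E_{x,r}, E \setminus E_{x,r}) \geq r/c$. Declare $x \sim y$ iff there is a finite chain $x = x_0,\dots,x_k = y$ in $E$ with $|x_i - x_{i-1}| < r/c$. Any such chain that leaves $E_{x,r}$ must somewhere cross the $r/c$-gap, so each $\sim$-class is contained in a single $E_{x,r}$ and has diameter at most $r$, while distinct classes are separated by at least $r/c$. Compactness of $E$ then forces the number of classes to be finite, yielding a partition $E = E_1 \sqcup \cdots \sqcup E_N$ with $\diam E_i \leq r$ and $\gap(E_i, E_j) \geq r/c$ for $i \neq j$.

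\emph{Step 2 (cubical neighborhoods).} For each $i$, let $M_i$ be the union of those closed cubes $Q \in \mathscr{G}_\delta$ with $\dist(Q, E_i) \leq \delta$. Then $\emptyset \neq E_i \subseteq M_i$, the cube containing any point of $E_i$ lies in $M_i$ (giving $\diam M_i \geq \delta$), and $\partial M_i \subseteq \mathscr{G}_\delta^{n-1}$. Every $x \in \partial M_i$ lies in some adjacent cube $Q' \notin M_i$, which gives $\dist(x, E_i) \geq \dist(Q', E_i) > \delta$. The elementary bounds $\diam M_i \leq (C_1 + 2 + 2\sqrt n)\delta$ and $\dist(x, E_i) \leq (1+\sqrt n)\delta$ for $x \in \partial M_i$ are immediate. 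Choosing $C_1 \geq 2c(2 + \sqrt n)$ forces the separation $\gap(E_i,E_j) \geq r/c$ to exceed twice the thickening radius, so $M_i \cap M_j = \emptyset$ and $\dist(x, E_j) \geq \delta$ for every $x \in \partial M_i$. Combining, $\gap(\partial M_i, E) \geq \delta$, and the upper bounds $\diam M_i \leq C\delta$ and $\dist(x,E) \leq C\delta$ hold with $C = C(n,c)$.

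\emph{Main obstacle (manifold structure).} The delicate point is verifying that each $M_i$ is genuinely an $n$-manifold with boundary. A union of grid cubes can fail this wherever two cubes of $M_i$ touch only along a face of dimension strictly less than $n-1$; the classical example (in the plane) is a vertex shared by two diagonally opposite squares of $M_i$ whose two off-diagonal neighbors lie outside $M_i$. These defects are local, and at each offending low-dimensional face one can add or remove the $O_n(1)$-many adjacent cubes to resolve the singularity, following the construction of MacManus \cite{MM}. The cubes added or removed during a repair around $E_i$ lie in an $O_n(\delta)$-neighborhood of $M_i$, so enlarging $C_1$ once more (still depending only on $n$ and $c$) ensures that these repairs do not reach any $M_j$ with $j \neq i$ and that the quantitative bounds of Step 2 survive. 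After the repair, $\partial M_i$ is a PL $(n-1)$-manifold contained in $\mathscr{G}_\delta^{n-1}$, so $M_i \in \mathcal{D}_\delta(E)$ as required.
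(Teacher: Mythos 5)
Note that the paper does not prove this lemma itself; it cites MacManus \cite{MM}, so there is no internal proof to compare against. Your Steps 1 and 2 are sound. The chain-component partition of $E$ at scale $r = C_1\delta$ correctly exploits $c$-uniform disconnectedness (a chain with steps $< r/c$ beginning in $E_{x,r}$ cannot cross the $r/c$-gap to $E\setminus E_{x,r}$), yielding finitely many pieces $E_i$ of diameter at most $r$ that are pairwise $r/c$-separated. The cubical $\delta$-thickening and the resulting diameter bounds, upper and lower distance bounds from $\partial M_i$ to $E$, and the pairwise disjointness of the $M_i$ (for $C_1$ chosen large enough in terms of $n$ and $c$) are all verified carefully and correctly.

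The manifold-structure step, however, contains a genuine gap. You rightly flag that the raw cubical thickening need not be an $n$-manifold with boundary, and this is indeed the technical heart of the lemma. But the claim that one can ``add or remove the $O_n(1)$-many adjacent cubes'' at each singular face, and that the repairs remain confined to an $O_n(\delta)$-neighborhood of $M_i$, is asserted rather than argued. Removing a cube can uncover points of $E_i$ or disconnect $M_i$; adding cubes to resolve one singular face can create new singular faces at the newly introduced vertices, so the repair can cascade outward layer by layer. The sentence ``the cubes added or removed during a repair around $E_i$ lie in an $O_n(\delta)$-neighborhood of $M_i$'' is exactly what needs to be proved: without a termination argument showing the cascade stops within a bounded (in $n$ and $c$ only) number of layers, you cannot conclude that the diameter and distance constants survive, that $\gap(\partial M_i,E)\geq \delta$ is preserved, or that the $M_i$ remain disjoint. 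This is the content of MacManus's construction that the sketch defers, and it is precisely the step that cannot be waved away.
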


\begin{corollary}\label{cor:decomp}
For all $n\geq 2$ and $c>1$, there exist constants $C_0 = C_0(n,c)>1$ and $\e_0 = \e_0(n,c)>0$ with the following property. If  $E\subseteq \R^n$ is compact and $c$-uniformly disconnected, then for all $\varepsilon\in(0,\e_0),$ with $\e^{-1}\in \mathbb{N}$, there is an integer $N = N(n,c,\e)\geq 1$, a set $\mathcal{W}$ of finite words in $\{1,\dots,N\}$, and a family $\{M_w:w\in\mathcal{W}\}$ of $n$-manifolds with boundary such that:
\begin{enumerate}
\item The empty word is in $\mathcal{W}$, and for every word $w\in \W$,  there exists $N_w\in\{1,\dots,N\}$ such that $wi \in \W$ for all $i\in \{1,\dots, N_w\}$.
\item For all $w\in \W$, the associated manifold $M_w \in \mathcal{D}_{\e^{|w|}\diam{E}}(E)$ and $$\diam{M_w} \leq C_0\e^{|w|}\diam{E},$$ where $|w|$ denotes the length of $w$.
\item For all distinct $w,w' \in \W$ with $|w|=|w'|$,  $$\gap(M_w,M_{w'}) \geq \e^{|w|}\diam{E}.$$
\item For all $w\in\W$ and for all $i\in \{1,\dots,N_w\}$, we have $M_{wi} \subseteq M_w$ and
$$ \gap(M_{wi},\partial M_w) \geq C_0^{-1}\e^{|w|}\diam{E}.$$
\item For all $w\in \W$, the intersection $E \cap M_w \neq \emptyset$ and $\gap(\partial M_w, E) \geq \e^{|w|}\diam{E}$.
\item The set $E$ is the limit of the $k$-th level approximations: $$E =\bigcap_{k\geq 0} \bigcup_{\stackrel{w\in\mathcal{W}}{|w|=k}}M_w.$$
\end{enumerate}
\end{corollary}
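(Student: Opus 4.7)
The statement is a multi-scale, nested refinement of Lemma \ref{lem2}. The plan is to construct $\{M_w\}$ by induction on $|w|$, applying Lemma \ref{lem2} at the successive scales $\delta_k:=\varepsilon^k\diam E$, and enumerating the children of each $M_w$ in an arbitrary order. Let $C=C(n,c)$ denote the constant from Lemma \ref{lem2}. The hypothesis $\varepsilon^{-1}\in\mathbb{N}$ ensures that the grid $\mathscr{G}_{\delta_{k+1}}$ refines $\mathscr{G}_{\delta_k}$, so the boundary-alignment condition in the definition of $\mathcal{D}_{\delta_k}(E)$ is compatible across scales. Two elementary observations will be used repeatedly: first, $c$-uniform disconnectedness is inherited by every subset (intersect each island $E_{x,r}$ with the subset), which lets us apply Lemma \ref{lem2} to $E\cap M_w$ at each step; and second, any two disjoint closed sets in $\RR^n$ that are each a finite union of closed cubes from $\mathscr{G}_\delta$ have gap at least $\delta$, because two distinct closed grid cubes can meet only along a common face, edge, or vertex, and hence disjoint ones must be separated by at least one full grid unit in some coordinate.

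For the base case, translate the grid $\mathscr{G}_{\diam E}$ so that one of its cubes contains $E$, and let $M_\varnothing$ be the $3\times\cdots\times 3$ block of grid cubes centered on that cube; this yields $M_\varnothing\in\mathcal{D}_{\diam E}(E)$ with $\diam M_\varnothing\leq 3\sqrt{n}\diam E$, verifying (2) and (5) at $k=0$ whenever $C_0\geq 3\sqrt{n}$. For the inductive step, assume $M_w$ has been built satisfying (2) and (5) at level $k=|w|$, and apply Lemma \ref{lem2} to the compact $c$-uniformly disconnected set $E\cap M_w$ at scale $\delta_{k+1}$. This produces a pairwise disjoint finite family $\mathcal{M}_w=\{M_{w1},\ldots,M_{wN_w}\}\subseteq\mathcal{D}_{\delta_{k+1}}(E\cap M_w)$ covering $E\cap M_w$, with $\delta_{k+1}\leq\diam M_{wi}\leq C\delta_{k+1}$. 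Fix $\varepsilon_0\leq 1/(4C)$ and take $C_0=C_0(n,c)$ large enough to absorb the estimates below.

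The conditions (1)--(6) are now verified. For (4), each $M_{wi}$ meets $E\cap M_w$ and has diameter at most $C\varepsilon\delta_k$; together with $\gap(\partial M_w,E)\geq\delta_k$ and $C\varepsilon<1/2$ this forces $M_{wi}\subseteq M_w$ and $\gap(M_{wi},\partial M_w)\geq (1-C\varepsilon)\delta_k\geq C_0^{-1}\delta_k$, which in turn gives (2) at level $k+1$. For (3), distinct siblings $M_{wi}\neq M_{wj}$ are disjoint closed unions of cubes in $\mathscr{G}_{\delta_{k+1}}$, so by the grid observation $\gap(M_{wi},M_{wj})\geq\delta_{k+1}$, while the cousin case propagates by induction: any two distinct level-$(k+1)$ words have distinct ancestors at some common level $j+1\leq k+1$, and the inductive sibling gap $\geq\delta_{j+1}\geq\delta_{k+1}$ is preserved under nesting. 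Property (5) at level $k+1$ is guaranteed directly by Lemma \ref{lem2}. The bound $N_w\leq N(n,c,\varepsilon)$ in (1) follows by volume-packing: each $M_{wi}$ contains a grid cube of volume $\delta_{k+1}^n$, the $M_{wi}$ are pairwise disjoint, and all lie in $M_w$ of diameter $\leq C_0\delta_k$, so $N_w\lesssim (C_0/\varepsilon)^n$. Finally, (6) follows from (2), (5), and the closedness of $E$: every point in the right-hand intersection is a limit of points in $E\cap M_{w_k}$ along a descending chain $M_{w_0}\supseteq M_{w_1}\supseteq\cdots$ with $\diam M_{w_k}\to 0$, and hence lies in $E$.

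The main obstacle I anticipate is not any single deep step but the bookkeeping needed to make one pair $(\varepsilon_0,C_0)$ work uniformly at every scale, while simultaneously delivering (3), (4), and the uniform bound on $N_w$; the sibling gap in (3) is the only place where the discrete grid structure of Lemma \ref{lem2}---and not merely pairwise disjointness of the manifolds---is essential.
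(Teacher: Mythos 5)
Your proposal follows essentially the same inductive scheme as the paper: apply Lemma~\ref{lem2} to $E\cap M_w$ at the next scale $\e^{|w|+1}\diam E$, absorb constants, and propagate the properties by induction. The base case, the grid-compatibility remark about $\e^{-1}\in\mathbb{N}$, the volume/cube-counting bound $N_w\lesssim (C_0/\e)^n$, the sibling-vs-cousin split for property (3), and the compactness/closedness argument for property (6) all match the paper's reasoning.

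There is one genuine gap. You write that ``Property (5) at level $k+1$ is guaranteed directly by Lemma~\ref{lem2}.'' It is not. Lemma~\ref{lem2} is applied to the set $E\cap M_w$, so what it delivers directly is $\gap(\partial M_{wi}, E\cap M_w)\geq \e^{|w|+1}\diam E$, whereas property (5) requires $\gap(\partial M_{wi}, E)\geq \e^{|w|+1}\diam E$, i.e., a gap to \emph{all} of $E$, including $E\setminus M_w$. The correct argument, which is the one the paper gives, is to split
$$\gap(\partial M_{wi}, E) = \min\bigl\{\gap(\partial M_{wi}, E\cap M_w),\ \gap(\partial M_{wi}, E\setminus M_w)\bigr\},$$
bound the first term by the lemma, and bound the second term by $\gap(M_{wi},\partial M_w)$ (since $\partial M_{wi}$ lies inside $M_w$ while $E\setminus M_w$ lies outside), which is exactly the quantity you already controlled in your verification of property (4). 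So the ingredients are in your write-up, but the dependency is (4) $\Rightarrow$ (5), not (5) ``from the lemma.'' This matters doubly because the membership $M_{wi}\in\mathcal{D}_{\e^{|w|+1}\diam E}(E)$ asserted in property (2) also requires the gap to all of $E$; your claim that ``(4) $\dots$ in turn gives (2) at level $k+1$'' is correct in substance only once (5) is established via (4), so the logical order should be: lemma and (4) together give (5), and (5) together with the lemma's diameter bound give (2).
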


\begin{proof} Let $n\geq 2$ and $c>1$. We will prove that the corollary holds with
\[  \e_0 = (2C)^{-1}, \qquad  C_0 = 10\sqrt{n}C\qquad\text{and}\qquad N = (\lceil C_0\rceil\e^{-1})^n,\]
where $C$ is the constant in Lemma \ref{lem2}.

Let $E\subseteq \R^n$ be compact and $c$-uniformly disconnected, and let $\varepsilon\in(0,\varepsilon_0)$. To ease notation, we may assume without loss of generality that $\diam E=1$. Choose an $n$-cube $M_{\emptyset}\subseteq \mathcal{D}_{1}(E)$ of side length $10$ such that $M_\emptyset$ contains $E$ and $$\gap(E,\partial M_{\emptyset}) \geq 1.$$ Then $M_\emptyset$ satisfies properties \emph{(2)} and \emph{(5)}.

Suppose that $M_w$ has been defined for some word $w$ so that $M_w$ satisfies both properties \emph{(2)} and \emph{(5)}. Applying Lemma \ref{lem2} to $E\cap M_w$ with $\delta=\e^{|w|+1}$, we can find a finite collection $\{M_{w1},\dots,M_{wN_w}\}\subseteq\mathcal{D}_{\varepsilon^{|w|+1}}(E\cap M_w)$ such that
\begin{equation}\label{eq:1} E\cap M_w \subseteq \bigcup_{i=1}^{N_w}{M_{wi}},\end{equation}
\begin{equation}\label{eq:2}\varepsilon^{|w|+1} \leq \diam{M_{wi}} \leq C \varepsilon^{|w|+1}\quad\text{for all }1\leq i\leq N_w,\text{ and}\end{equation}
\begin{equation}\label{eq:3}\varepsilon^{|w|+1} \leq \dist(x,E\cap M_w) \leq C \varepsilon^{|w|+1}\quad\text{for all $1\leq i\leq N_w$ and $x\in \partial M_{wi}$}.\end{equation}

By property \emph{(2)}, $M_w$ consists of at most $\lceil C_0\rceil^n$ cubes in $\mathscr{G}_{\e^{|w|}\diam{E}}$. Since $\e^{-1}$ is an integer, each cube in $\mathscr{G}_{\e^{|w|}\diam{E}}$ consists of exactly $\e^{-n}$ cubes in $\mathscr{G}_{\e^{|w|+1}\diam{E}}$. Therefore, there are at most $(\lceil C_0\rceil\e^{-1})^n$ cubes $Q \in \mathscr{G}_{\e^{|w|+1}\diam{E}}$ contained in $M_w$. Since each $M_{wi}$ is a union of cubes $Q \in \mathscr{G}_{\e^{|w|+1}\diam{E}}$ contained in $M_w$, and $M_{w1},\dots,M_{wN_w}$ are mutually disjoint, we have $N_w \leq N$.

Furthermore, for each $M_{wi}$, property \emph{(2)} follows from (\ref{eq:2}) and property \emph{(3)} follows from the fact that the sets $M_{wi}$ are disjoint and belong in $\mathcal{D}_{\e^{|w|+1}}(E\cap M_w)$. Since $\e<(2C)^{-1}$, by (\ref{eq:3})
\begin{align*}
\gap(M_{wi},\partial M_w) &\geq \gap(\partial M_w, E\cap M_w) - \sup_{x\in \partial M_{wi}}\dist(x,E\cap M_w)\\
&\geq\gap(\partial M_w, E) - \sup_{x\in \partial M_{wi}}\dist(x,E\cap M_w)\\
&\geq \e^{|w|} - C\e^{|w|+1} > \frac12\e^{|w|}
\end{align*}
and property \emph{(4)} holds. For property \emph{(5)},
\begin{align*}
\gap(\partial M_{wi}, E) &= \min\{\gap(\partial M_{wi}, E\cap M_w),\gap(\partial M_{wi}, E\setminus M_w) \}\\
&\geq \min\{\gap(\partial M_{wi}, E\cap M_w),\gap(M_{wi}, \partial M_w) \} \geq \e^{|w|+1}.
\end{align*}
Finally, each $M_w$ intersects $E$ and by (\ref{eq:1}), $E \subseteq \bigcup_{\stackrel{w\in\mathcal{W}}{|w|=k}}M_w$. Therefore, property \emph{(6)} holds and the proof is complete.
\end{proof}

We first prove Theorem \ref{thm:bilip} in the special case that $m=n-1$ and $E$ is compact.

\begin{proposition}[bi-Lipschitz parameterization for compact sets, in codimension one]\label{sec:tree} For all $n\geq 2$, $c>1$, $C>1$, and $0\leq s<n-1$, there exists a constant $L=L(n,c,C,s)\geq \sqrt{2}$ with the following property. If $E\subseteq\RR^n$ is compact, $c$-uniformly disconnected, and $(C,s)$-homogeneous, then there is an $L$-bi-Lipschitz embedding $f:\R^{n-1} \to \R^n$ such that $E\subseteq f([-1,1]^{n-1})$ and $f|_{\RR^{n-1}\setminus [-1,1]^n}$ is an isometric embedding.\end{proposition}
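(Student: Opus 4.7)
The plan is to apply Corollary \ref{cor:decomp} to $E$ with parameter $\varepsilon>0$ chosen small enough that $N<\varepsilon^{-(n-1)}$; this is possible because $E$ is $(C,s)$-homogeneous with $s<n-1$, so $N\lesssim\varepsilon^{-s}$. The output is a tree $\mathcal{W}$ of words together with $n$-manifolds $\{M_w\subseteq\RR^n\}$ obeying the stated diameter, gap, and nesting bounds. I will simultaneously construct a parallel tree of axis-parallel $(n-1)$-cubes $\{M_w'\subseteq[-1,1]^{n-1}\}$ with $M_\emptyset'=[-1,1]^{n-1}$, $\diam M_w'\sim\varepsilon^{|w|}$, $M_{wi}'\subseteq\mathrm{int}(M_w')$, and $\gap(M_{wi}',M_{wj}')\gtrsim\varepsilon^{|w|}$ for $i\neq j$; the bound $N<\varepsilon^{-(n-1)}$ leaves enough room in $\RR^{n-1}$ to fit these cubes.

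The map $f$ is built inductively across the levels $|w|=0,1,2,\dots$, by defining it on each annular piece $A_w':=M_w'\setminus\bigcup_i\mathrm{int}(M_{wi}')$ of $[-1,1]^{n-1}$ as a bi-Lipschitz embedding (with uniform constant) onto a codimension-one surface $\Sigma_w$ lying in $A_w:=M_w\setminus\bigcup_i\mathrm{int}(M_{wi})$. To construct $\Sigma_w$, I choose basepoints $y_{w,0}\in\partial M_w$ and $y_{w,i}\in\partial M_{wi}$ for $1\leq i\leq N_w$, connect them by simple oriented polygonal arcs $\gamma_{w,i}\subseteq\mathscr{G}_\delta^1$ inside $A_w$ for a suitable $\delta\ll\varepsilon^{|w|}$, form the $n$-dimensional tubes $\tube_{\varepsilon'}(\gamma_{w,i})$ with $\varepsilon'\leq\delta/(8\sqrt{n})$, straighten each tube into a standard cylinder via Lemma \ref{lem:blthick}, and take a fixed codimension-one slice of each cylinder. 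The mirror construction in $A_w'$---a standard Euclidean pair-of-pants region of uniformly bounded combinatorial type $N_w\leq N$---yields the domain side, and the explicit tube-by-tube pairing defines the model bi-Lipschitz map.

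Boundary data of consecutive levels are arranged to match isometrically, using the fact that Lemma \ref{lem:blthick} restricts to isometries on entrance and exit faces. Outside $[-1,1]^{n-1}$, the map $f$ is extended isometrically: first place $M_\emptyset$ and orient the global construction so that $f(\partial[-1,1]^{n-1})$ is an isometric copy of $\partial[-1,1]^{n-1}\times\{0\}\subseteq\RR^n$, then define $f(x)=(x,0)$ for $x\in\RR^{n-1}\setminus[-1,1]^{n-1}$. For each $x\in[-1,1]^{n-1}$ lying in infinitely many nested $M_{w_k}'$, continuity of $f$ forces $f(x)$ to be the unique point of $E\cap\bigcap_k M_{w_k}$, which exists as a leaf of the tree by property \emph{(6)} of Corollary \ref{cor:decomp}. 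This yields $E\subseteq f([-1,1]^{n-1})$.

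The main obstacle is obtaining a uniform bi-Lipschitz constant for the model map between $A_w'$ and $\Sigma_w$. This reduces to three ingredients: (i) choosing the arcs $\gamma_{w,i}$ so their tubes are pairwise disjoint inside $A_w$, which uses the lower gap bounds $\gap(M_{wi},\partial M_w)\gtrsim\varepsilon^{|w|}$ and $\gap(M_{wi},M_{wj})\geq\varepsilon^{|w|}$ from Corollary \ref{cor:decomp}; (ii) the length bound $\Haus^1(\gamma_{w,i})\lesssim\varepsilon^{|w|}$ (cf.~Remark \ref{rem:length}), needed for Lemma \ref{lem:blthick} to give a level-independent constant; and (iii) scale invariance, since every level-$k$ configuration is a rescaled copy of one of finitely many combinatorial patterns---finitely many because $N_w\leq N$. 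Together these produce the promised constant $L=L(n,c,C,s)$.
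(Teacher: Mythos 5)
Your overall framework matches the paper's: invoke Corollary~\ref{cor:decomp} to get a tree of manifolds $\{M_w\}$, build a parallel tree of $(n-1)$-cubes $\{M_w'\}$ in $[-1,1]^{n-1}$ with matching diameter and gap bounds, define $f$ level by level on annular/pair-of-pants pieces, glue isometrically outside $[-1,1]^{n-1}$, straighten tubes with Lemma~\ref{lem:blthick}, and deduce a uniform bi-Lipschitz constant from the scale-invariance and the bounded combinatorics $N_w\leq N$. The final identification of $E$ with the set of leaves is also handled correctly.

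There is, however, a genuine gap in the construction of the target surface pieces $\Sigma_w$, and it occurs at the branch points. You choose a single basepoint $y_{w,0}\in\partial M_w$, run $N_w$ arcs $\gamma_{w,i}$ from $y_{w,0}$ to $y_{w,i}\in\partial M_{wi}$, take tubes $\tube_{\varepsilon'}(\gamma_{w,i})$, and then ``take a fixed codimension-one slice of each cylinder.'' Two problems. First, if all arcs emanate from the same point $y_{w,0}$, the tubes necessarily overlap near $y_{w,0}$, so they cannot be made pairwise disjoint (which you need in ingredient (i) of your uniformity argument). Second, and more fundamentally, a codimension-one \emph{slice} of a straightened cylinder is a flat $(n-1)$-dimensional strip through the tube, and the union of such strips over several tubes meeting at a common basepoint is a wedge of sheets, not an embedded $(n-1)$-manifold; in particular it is not homeomorphic to the domain piece $A_w'=M_w'\setminus\bigcup_i\operatorname{int}(M_{wi}')$, which is a punched-out cube with $N_w$ holes (an $(N_w+1)$-holed pair of pants). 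In codimension one the correct surface piece is not a slice of the tube but rather its \emph{lateral boundary} $\side(\tube_\varepsilon(\gamma))$, which is a topological annulus $S^{n-2}\times[0,1]$. The paper resolves both issues simultaneously by inserting a small $n$-cube $\mathcal{K}_w$ attached at $A_w\subseteq\partial M_w$, letting the arcs $\gamma_{wi}$ emanate from disjoint, well-separated sub-faces $S_{wi}$ of the antipodal face $\tilde{A}_w$ of $\mathcal{K}_w$, and building the target surface as the union of the punched-out face $\Delta_w=\overline{\tilde A_w\setminus\bigcup_i S_{wi}}$ (the pair-of-pants hub) and annuli $\mathcal{A}_w$ assembled from $\side(\tau_w)$, the cube faces of $\mathcal{K}_w$, and the punched-out square $S_w\setminus\entrance(\tau_w)$. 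With that replacement your plan goes through; as stated, $\Sigma_w$ is neither a manifold nor disjoint from its siblings.

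One smaller point: items (i)--(iii) at the end establish only the \emph{local} bi-Lipschitz bound on each piece and on adjacent pairs of pieces. For the global bound you also need to handle pairs $x,y$ separated by some intermediate piece $\mathcal{S}_w'$, where the estimate $|x-y|\sim\varepsilon^{|w|}\sim|f(x)-f(y)|$ follows from the matching two-sided diameter and gap comparisons between $\{M_w'\}$ and $\{M_w\}$. You state the needed comparability of the cube trees, so this is a routine addition, but it should be said explicitly.
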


\begin{proof} Let $n\geq 2$, $c>1$, $C>1$, $s\in[0,n-1)$, and assume that $E\subseteq\RR^n$ is compact, $c$-uniformly disconnected, and $(C,s)$-homogeneous. Applying similarities to the domain and range of the embedding, we may assume without loss of generality that $E$ contains the origin and has diameter 1. For the rest of the proof, fix an integer $k_0$ such that
\[ 2^{k_0} \geq 8\sqrt{n}.\]
The proof now breaks up into three steps. In \textsc{Step 1}, we construct a surface containing the set $E$. In \textsc{Step 2}, we build a homeomorphism between the surface and $\RR^{n-1}$. Then, in \textsc{Step 3}, we verify that the homeomorphism is bi-Lipschitz.

\bigskip\noindent\textsc{Step 1}. We will use Corollary \ref{cor:decomp} to build a tree-like surface\footnote{Constructions of tree-like surfaces are by now classical. For instance, see \cite[Figure 2.4.16]{Rushing}.} $\mathcal{S}$ that contains $E$. Set
\[\e^{-1} := 1+\max\{\lceil (3^{n-1}(2C_0)^{\beta}C)^{\frac1{n-1-\beta}} \rceil, \lceil \e_0^{-1}\rceil\},\]
where $\e_0$ and $C_0$ are the constants of Corollary \ref{cor:decomp}. By the $(C,\beta)$-homogeneity of $E$ and choice of $\varepsilon$,
\begin{equation}\label{eq:epsilon}
C\left ( \frac{2C_0\e^k}{\e^{k+1}} \right )^{\beta} = C(2C_0)^{\beta} \e^{-\beta} \leq (3\e)^{1-n}.
\end{equation}
Let $\{M_w:w\in\mathcal{W}\}$ be the family of manifolds with boundary associated to $\varepsilon$ given by Corollary \ref{cor:decomp}. By our assumption on the size and position of $E$, we may assume without loss of generality that the initial manifold $M_{\emptyset}=[-5,5]^n$ (see the proof of Corollary \ref{cor:decomp}). Define $$A_\emptyset:=[-5,5]^{n-1}\times\{5\}\subseteq\partial M_{\emptyset},$$ and for each word $w\in\mathcal{W}$ with $|w|\geq 1$, choose an $(n-1)$-cube $$A_w\subseteq\mathscr{G}^{n-1}_{\varepsilon^{|w|}}\cap \partial M_w$$ of side length $\varepsilon^{|w|}$ and boundary in $\mathscr{G}^{n-2}_{\varepsilon^{|w|}}$ . Let $x_w$ denote the center of $A_w$. By properties \emph{(2)} and \emph{(6)} of $\{M_w:w\in\mathcal{W}\}$, the set $$E= \lim_{l\rightarrow\infty} \bigcup_{\stackrel{w\in\mathcal{W}}{|w|=l}} A_w$$ in the Hausdorff topology. Thus, we aim to constructing a sequence of intermediate surfaces that pass through successive generations of the $(n-1)$-cubes $A_w$.

\emph{Base Case.} Define $$ x_{\emptyset} := (0,\dots,0,5)\quad \text{and}\quad y_\emptyset:=(0,\dots,0,10).$$ Note that $x_\emptyset$ is the center of $A_\emptyset$. Let $\g_{\emptyset}$ denote the line segment from $y_\emptyset$ to $x_{\emptyset}$ and let $\tau_{\emptyset}$ denote the tube around $\gamma_\emptyset$ of width $\delta_\emptyset=2^{-k_0}$
\[ \tau_{\emptyset} = \tube_{\d_{\emptyset}}(\g_{\emptyset}) = [-2^{-k_0-1},2^{-k_0-1}]^{n-1} \times [5,10]. \] With the specified orientation on $\gamma_\emptyset$, $$\entrance(\tau_\emptyset)=[-2^{-k_0-1},2^{-k_0-1}]^{n-1}\times \{10\}, \quad\exit(\tau_\emptyset)= [-2^{-k_0-1},2^{-k_0-1}]^n\times\{5\}=A_\emptyset.$$ Next, define the punched-out plane,
\begin{equation}\label{eq:P} \mathcal{P} := (\R^{n-1}\setminus [-1,1]^{n-1})\times\{10\}\end{equation}
and an auxiliary $(n-1)$-cube, $S_{\emptyset} := [-1,1]^{n-1} \times \{10\}$.
The union
\[ \mathcal{S}^{(0)}:=\mathcal{P} \cup (S_{\emptyset}\setminus \entrance(\tau_{\emptyset})) \cup \partial \tau_\emptyset\]
denotes the 0th level approximation of surface $\mathcal{S}$.

\emph{Inductive Step.} Let $w\in\W$ and assume that we have defined $\tau_{w} = \tube_{\d_w}(\g_w)$ for some arc $\gamma_w$ and $\d_w < \e^{|w|}/3$ such that $$\tau_{w}\cap \overline{M_w}=\exit(\tau_w)\subseteq A_w\subseteq\partial M_w.$$
First, divide $A_w$ into $3^{n-1}$ congruent $(n-1)$-cubes of side length $\frac13 \e^{|w|}$, and let $\hat{A}_w$ denote the central subcube in the division. Define $\mathcal{K}_w$ to be the unique $n$-cube contained in $M_w$, which has $\hat{A}_w$ as an $(n-1)$-face. Let $\tilde{A}_w$ denote the $(n-1)$-face in $\mathcal{K}_w$ that is antipodal to $\hat{A}_w$. Second, divide $\tilde{A}_w$ into $(3N)^{n-1}$-many $(n-1)$-cubes of side length $(1/9N) \e^{|w|}$, and choose subcubes $S_{w1},\dots,S_{wN_w}$ in the division that are mutually disjoint and satisfy $S_{wi}\cap (\partial \mathcal{K}_w \setminus \tilde{A}_w) = \emptyset$. Thus, the $(n-1)$-cubes $S_{wi}$ lie on the relative interior of $\tilde{A}_w$, \begin{equation*}\gap(S_{wi},S_{w_j}) \geq \frac{1}{9N}\e^{|w|}\quad\text{when $i\neq j$, and}\end{equation*}
\begin{equation*}\gap(S_{wi},\partial \tilde{A}_w) \geq \frac{1}{9N}\e^{|w|}\quad\text{for all $i\in\{1,\dots,N_w\}$}.\end{equation*} For each $i=1,\dots,N_w$, let $y_{wi}$ denote the center of $S_{wi}$.

\begin{figure}
\includegraphics[scale=0.7]{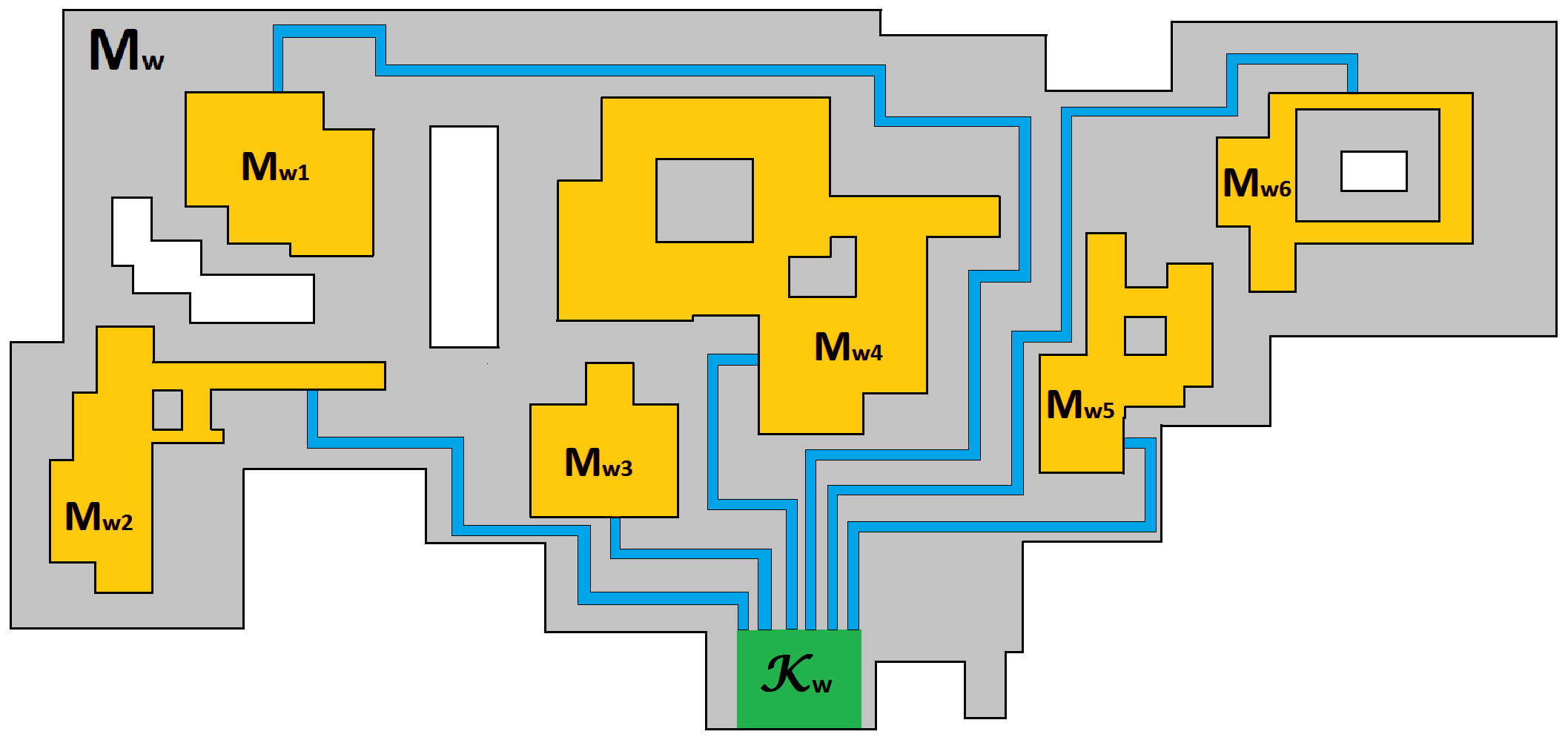}
\caption{The subtree of $\mathcal{S}$ inside $M_w$.}\label{fig:tree}
\end{figure}

We now select a sequence of arcs connecting the points $y_{wi}$ to the centers $x_{wi}$ of the next generation $(n-1)$-cubes $A_{wi}$; see Figure \ref{fig:tree}. First, choose a polygonal arc $\g_{w1}\subseteq\mathscr{G}^1_{\e^{|w|+1}/2}$ with endpoints in $\mathscr{G}^0_{\e^{|w|+1}/2}$ that lies (except at its endpoints) in the interior of the manifold
\[ M_w\setminus \left ( \mathcal{K}_w\cup \bigcup_{j=1}^{N_w} M_{wj}\right )\]
and joins $y_{w1}$ to $x_{w1}$. Proceeding inductively, for each $i=2,\dots,N_w$, choose a polygonal arc $\g_{wi}\subseteq\mathscr{G}^1_{2^{-i-1}\e^{|w|+1}}$ with endpoints in $\mathscr{G}^0_{2^{-i-1}\e^{|w|+1}}$ that lies (except at its endpoints) in the interior of the manifold
\[M_w \setminus \left (\mathcal{K}_w\cup\bigcup_{j=1}^{N_w} M_{wj} \cup \bigcup_{j=1}^{i-1} \g_{wj}\right )\]
and joins $y_{wi}$ to $x_{wi}$. Since each $\g_{wi}\subseteq \mathscr{G}^1_{2^{-N-1}\e^{|w|+1}}$, $\g_{wi}\subseteq M_{w}$ and $\diam{M_w} \leq C_0\e^{|w|}$, by Remark \ref{rem:length}, the length of each $\g_{wi}$ is uniformly bounded:
\begin{equation}\label{eq:length}
{H}^1(\g_{wi}) \leq C_1 \e^{|w|}
\end{equation}
for some $C_1>0$ depending only on $\e$, $N$ and $C_0$ (thus, only on $n$, $c$, $C$ and $s$).

For each $i=1,\dots,N_w$, let $\d_{wi} = 2^{-N-k_0}\e^{|w|+1}$ and $\tau_{wi} = \tube_{\d_{wi}}(\g_{wi})$, oriented so that $\entrance(\tau_{wi})$ lies on $S_{wi}$, while  $\exit(\tau_{wi})$ lies on $\partial M_{wi}$. Define $\Delta_w$ to be a punched-out $(n-1)$-cube, with $N_w$-many $(n-1)$-cubical holes, say
\begin{equation}\label{eq:Delta}\D_w:=\overline{\tilde{A}_w \setminus\bigcup_{i=1}^{N_w}S_{wi}}.\end{equation} Also, define the topological $(n-1)$-annulus,
\begin{equation}\label{eq:A} \mathcal{A}_w := \overline{ ( S_w \setminus \entrance(\tau_w) ) \cup \side(\tau_w) \cup  ( \partial\mathcal{K}_w \setminus (\tilde{A}_w\cup \exit(\tau_w))  )} .\end{equation}
The $k$-th level approximation of $\mathcal{S}$ is
\[ \mathcal{S}^{(k)} := \mathcal{P}\cup \bigcup_{{\stackrel{w\in\mathcal{W}}{|w|\leq k}}}(\D_w \cup \mathcal{A}_w).\]

\emph{End Step.} Define $\mathcal{S}$ to be the closure of $\bigcup_{k=0}^{\infty}\mathcal{S}^{(k)}$; that is,
\[ \mathcal{S} := E \cup \mathcal{P}\cup \bigcup_{w\in\W}(\D_w \cup \mathcal{A}_w).\] See (\ref{eq:P}), (\ref{eq:Delta}), and (\ref{eq:A}).

\bigskip\noindent\textsc{Step 2}. We will construct a homeomorphism $f : \R^{n-1} \to \mathcal{S}$ that maps $\RR^n\setminus[-1,1]^{n-1}$ onto the punched-out plane $\mathcal{P}$ , isometrically, and maps $[-1,1]^{n-1}$ onto $\mathcal{S} \setminus \mathcal{P}$. Afterwards, in \textsc{Step 3}, we verify that $f$ is actually bi-Lipschitz.

Let us decompose $[-1,1]^n$ into subsets corresponding to the preimages of parts of $\mathcal{S}\setminus \mathcal{P}$. First, assign $M_{\emptyset}' := [-1,1]^{n-1}$. We proceed by induction. Suppose that for some $w\in\W$, we have defined sets $M_{w}' = \mathcal{C}^{n-1}(z_w;\e^{|w|})$. By $(C,\beta)$-homogeneity of $E$ and (\ref{eq:epsilon}), we have $N_w\leq (3\e)^{1-n}$. Thus, we can locate cubes $M_{wi}' := \mathcal{C}^{n-1}(z_{wi};\e^{-|w|-1})$ for all $i=1,\dots,N_w$ so that for all distinct $i,j\in\{1,\dots,N_w\}$,
\begin{itemize}
\item $M_{wi}' \subseteq M_{w}'$,
\item $\gap (M_{wi}',M_{wj}') \geq \e^{|w|+1}$, and
\item $\gap (M_{wi}',\partial M_{w}') \geq \e^{|w|+1}$.
\end{itemize}
For each $w\in\mathcal{W}$, let
\[\D_w' = \overline{(M_w'\setminus\mathcal{A}_w') \setminus \bigcup_{i=1}^{N_w}M_{wi}'}\qquad\text{and}\qquad\mathcal{A}_{w}' = \mathcal{A}^{n-1}(z_{w}; \e^{|w|}-\frac12 \e^{|w|+1},\e^{|w|}).\]
Finally, define $E' := \bigcap_{n=0}^{\infty}\bigcup_{w\in\W , |w|=n} M_w'$.

By (\ref{eq:length}) and Lemma \ref{lem:blthick} (replacing $n$ with $n-1$), there exists $L_1 = L_1(n,c,C,\beta)>1$ such that for every $w\in\W$, there exists a homeomorphism $\phi_w : \mathcal{A}^{n-1}(0;1-\frac{\e}{2},1) \to  \mathcal{A}_w$ such that
\begin{itemize}
\item $\e^{-|w|}\phi_w$ is $L_1$-bi-Lipschitz and orientation preserving;
\item $\phi_w|_{\partial \C^{n-1}(0;1)}$ is a similarity that maps $\partial \C^{n-1}(0;1)$ onto the relative boundary of $S_w$;
\item $\phi_w|_{\partial \C^{n-1}(0;1-\frac{\e}{2})}$ is a similarity that maps $\partial \C^{n-1}(0;1-\frac{\e}{2})$ onto $\mathcal{A}_{w}\cap \D_w$.
\end{itemize}
Now define $f|_{\mathcal{A}_w'} : \mathcal{A}_w' \to \mathcal{A}_w$ by setting $$f|_{\mathcal{A}_w'}(x) := \phi_w(\e^{-|w|}(x-z_w)).$$ Then $f|_{\mathcal{A}_w'}$ is an $L_1$-bi-Lipschitz orientation preserving homeomorphism of $\mathcal{A}_w'$ onto $\mathcal{A}_w$. Applying a standard extension argument (e.g., see \cite[Proposition 3.6]{Vext}), one can show that there exists $L_2=L_2(n,c,C,\beta)>1$ so that $f$ extends to a $L_2$-bi-Lipschitz map on each $\D_w'$, with $f(\D_w') = \D_w$. Finally, since $E$ is contained in the closure of $\mathcal{S}$, the map $f$ extends uniquely on $E'$ as a homeomorphism and maps $E'$ onto $E$.

Therefore, we have obtained a homeomorphism $f:\R^{n-1} \to \mathcal{S}$ such that $f([-1,1]^{n-1})= \mathcal{S}\setminus \mathcal{P}$ and $f(E')=E$. For each $w\in\W$, define
\[ \mathcal{S}_w := \mathcal{A}_w\cup\D_w \quad\text{ and }\quad \mathcal{S}_w' := \mathcal{A}_w'\cup\D_w'.\]
By construction, $f(\mathcal{S}_w') = \mathcal{S}_w$.

\bigskip\noindent\textsc{Step 3}. It remains to show that $f$ is $L$-bi-Lipschitz for some $L = L(n,c,C,\beta)>1$. By the previous steps, there exists $L_0 = L_0(n,c,C,\beta)>1$ such that for all $w\in\W$ and distinct $i,j\in\{1,\dots,N_w\}$:
\begin{enumerate}
\item The restrictions $f|_{\mathcal{S}_w'\cup \mathcal{S}_{wi}'}$ and $f|_{\R^{n-1}\setminus \bigcup_{i=1}^{N_{\emptyset}}M_{i}'}$ are $L_0$-bi-Lipschitz.
\item $L_0^{-1}\e^{|w|} \leq \diam{\mathcal{S}_w'} \leq L_0 \e^{|w|}$ and $L_0^{-1}\e^{|w|} \leq \diam{\mathcal{S}_w} \leq L_0 \e^{|w|}$.
\item $L_0^{-1}\e^{|w|} \leq \diam{M_{w}'} \leq L_0 \e^{|w|}$ and $L_0^{-1}\e^{|w|} \leq \diam{M_{w}} \leq L_0 \e^{|w|}$.
\item $L_0^{-1}\e^{|w|} \leq \gap(M_{wi}',M_{wj}') \leq L_0 \e^{|w|}$ and $L_0^{-1}\e^{|w|} \leq \gap(M_{wi},M_{wj}) \leq L_0 \e^{|w|}$.
\end{enumerate}
Below, we say that two points $x,y\in\R^{n-1}$ are \emph{separated by $\mathcal{S}_w'$} for some $w\in\mathcal{W}$ if neither $x$ nor $y$ is contained in $\mathcal{S}_w'$ and any curve in $\R^{n-1}$ joining $x$ and $y$ intersects $\mathcal{S}_w'$. Also, given $a,b>0$, we write $a\lesssim b$ to denote that $a \leq C^* b$ for some $C^* = C^*(n,c,C,\beta)>1$ and $a\sim b$ to denote that $a\lesssim b$ and $b\lesssim a$.

To show that $f$ is bi-Lipschitz, fix $x,y \in \R^{n-1}$. First suppose that $x\in \R^{n-1} \setminus [-1,1]^{n-1}$. On one hand, if $y\in \mathcal{S}_{\emptyset}'$, then $$|x-y| \sim |f(x)-f(y)|$$ by (1). On the other hand, if $y \in \bigcup_{i=1}^{N_{\emptyset}}M_{i}'$, then
\[|x-y| \sim 1 + \dist(x,M_{\emptyset}') \sim 1 + \dist(f(x),M_{\emptyset}) \sim |f(x)-f(y)|\] by (2).
In both cases, $|f(x)-f(y)| \sim |x-y|$. Therefore, to complete the proof, we may assume that $x,y\in [-1,1]^{n-1}$. There are two alternatives.

\emph{Case 1.} Suppose that $x$ and $y$ are not separated $\mathcal{S}_w'$ for any $w\in\mathcal{W}$. Then there exists $w\in \mathcal{W}$ and $i\in \{1,\dots,N_w\}$ such that $x,y \in \mathcal{S}_w'\cup \mathcal{S}_{wi}'$. Hence $$|f(x)-f(y)| \sim |x-y|$$ by (1).

\emph{Case 2.} Suppose that $x$ and $y$ are separated by $\mathcal{S}_w$ for some $w\in\mathcal{W}$. Let $w_0$ be the minimal word with the property that $\mathcal{S}_{w_0}$ separates $x$ and $y$. That is, if $\mathcal{S}_w$ separates $x$ and $y$, then $w = w_0 u$. Since $x,y\in [-1,1]^{n-1}$, we have $w_0 \neq \emptyset$. Hence
\[|x-y|\sim \e^{|w_0|}\sim |f(x)-f(y)|\] by (2), (3), and (4).
This completes the proof that $f$ is bi-Lipschitz.
\end{proof}

We now derive Theorem \ref{thm:bilip} from Proposition \ref{sec:tree}.

\begin{proof}[{Proof of Theorem \ref{thm:bilip}}]
Let $E\subseteq \R^n$ be $c$-uniformly disconnected and $(C,\beta)$-homogeneous for some $n,m\in\N$, $C>1$, $c>1$ and $\beta<m\leq n-1$. Since bi-Lipschitz maps extend to the closure of their domain and $\overline{E}$ is also $(C,\beta)$-homogeneous, $E$ can be assumed closed.

Suppose first that $m=n-1$. By Proposition \ref{sec:tree} we may assume that $E$ is unbounded.  Fix distinct points $x_1,x_2\in E$. For each $k\in\N$, let $E_k$ be the set $E_{x_1,2^k|x_1-x_2|}$ appearing in the definition of uniform disconnectedness.  Note that each $E_k$ is compact, $c$-uniformly disconnected and $(C,\beta)$-homogeneous. For each $k\in\N$, by Proposition \ref{sec:tree}, there exists an $L$-bi-Lipschitz embedding $f_k:\R^{n-1} \to \R^n$ with $L=L(n,c,C,\beta)>1$ and $E_k \subseteq f_k(\R^{n-1})$. Applying appropriate similarities, we may assume that $f_k(0,\dots,0) = x_1$ for all $k\in\N$. By the Arzel\`a-Ascoli Theorem, there exists a subsequence $f_{k_j}$ that converges uniformly on compact sets to an $L$-bi-Lipschitz embedding $f:\R^{n-1} \to \R^n$. For each $x\in E$, the sequence $f_{k_j}^{-1}(x)$ converges to a point $f^{-1}(x)$ in $\R^{n-1}$, and consequently, $E\subseteq f(\R^{n-1})$.

Suppose now that $m<n-1$. Set $c_0=c$ and $C_0 = C$. By the codimension 1 case, for each $k=1,\dots,n-m$, there exist $L_k=L_k(n,c_{k-1},C_{k-1},s)>1$, $c_{k}=c_{k}(n,c_{k-1},C_{k-1},s)>1$, $C_{k} = C_{k}(n,c_{k-1},C_{k-1},s)>1$, an $L_k$-bi-Lipschitz embedding $f_k: \R^{n-k} \to \R^{n-k+1}$ and a $c_k$-uniformly disconnected and $(C_{k},\beta)$-homogeneous set $E_k\subseteq\R^{n-k}$ such that $f(E_k) = E_{k-1}$. The map $f:\R^{m}\to\R^n$ with $f = f_1\circ\cdots\circ f_{n-m}$ is $(L_{n-m}\cdots L_1)$-bi-Lipschitz and maps $E_{n-m}$ onto $E$.
\end{proof}

\begin{remark}\label{rem:QS} Assume that $E\subseteq\RR^n$ is compact and $c$-uniformly disconnected. It is possible to modify the proof of Proposition \ref{sec:tree} to produce a quasisymmetric map $f:\RR^{n-1}\rightarrow\RR^n$ whose image contains $E$. To carry this out, first repeat the construction of the surface in Step 1 with the alternative parameter \[\e^{-1}:= 1+\max\{\lceil 2C_0 +1 \rceil, \lceil\e_0^{-1}\rceil\}. \] Then use arguments similar to \cite{MM} or \cite[Theorem 6.3]{Vais1} to parameterize the surface containing $E$ by a quasisymmetric map. To extend the proof to unbounded sets, apply an the Arzel\`a-Ascoli Theorem for quasisymmetric maps \cite[Corollary 10.30]{Heinonen}. Theorem \ref{thm:QS} may be derived from the codimension 1 case in the same way that Theorem \ref{thm:bilip} is derived from Proposition \ref{sec:tree}.
\end{remark}

\part{Geometry of measures}

In this part of the paper, we prove Theorems \ref{thm:A}, \ref{thm:B}, and \ref{thm:C}, which identify conditions on the lower and upper Hausdorff densities that guarantee a Radon measure is either carried by or singular to H\"older curves or surfaces. For the statements of these theorems, see \S1.1 in the introduction. The main tools that we use are three parameterization theorems from Part I: Theorem \ref{t:diam^s}, Theorem \ref{thm:param}, and Theorem \ref{thm:bilip} (in the form of Corollary \ref{cor:bilip}). The proof of Theorem \ref{thm:A} is given in \S\ref{s:proof-A} and the proof of Theorems \ref{thm:B} and \ref{thm:C} is given in \S\ref{s:proof-BC}.

\section{Points of extreme lower density (Proof of Theorem \ref{thm:A})}
\label{s:proof-A}

The proof of the first part of Theorem A---Radon measures are singular to H\"older curves on sets of vanishing lower density---uses the relationship between lower Hausdorff densities and packing measures. The argument that we present below closely follows \cite[\S2]{BS1}, which focused on Lipschitz images. To fix conventions, we recall the definition of  $s$-dimensional Hausdorff measure $\Haus^s$ and $s$-dimensional packing measure $\Pack^s$, each of which are Borel regular metric outer measures on $\RR^n$. In the top dimension ($s=n$), the measures $\Haus^n$ and $\Pack^n$ coincide and are a constant multiple of Lebesgue measure on $\RR^n$. For a proof of these facts and further background, see \cite{Falconer} or  \cite{Mattila}.

\begin{definition}[Hausdorff and packing measures in $\RR^n$] Let $s\geq 0$ be a real number. Let $E,E_1,E_2,\dots$ denote sets in $\RR^n$. The \emph{$s$-dimensional Hausdorff measure} $\Haus^s$ is defined by $\Haus^s(E)=\lim_{\delta\rightarrow 0} \Haus^s_\delta(E)$, where \begin{equation*} \Haus^s_\delta(E)=\inf\left\{\sum_i (\diam E_i)^s:E\subseteq\bigcup_i E_i,\ \diam E_i\leq \delta\right\}.\end{equation*} The \emph{$s$-dimensional packing premeasure} $P^s$ is defined by $P^s(E)=\lim_{\delta\rightarrow 0} P^s_\delta(E)$, where \begin{equation*}P^s_\delta(E)=\sup\left\{\sum_{i}(2r_i)^s: x_i\in E, 2r_i\leq \delta, i\neq j\Rightarrow B(x_i,r_i)\cap B(x_j,r_j)=\emptyset\right\}.\end{equation*} The \emph{$s$-dimensional packing measure} $\Pack^s$ is defined by \begin{equation*}\Pack^s(E)=\inf\left\{\sum_i P^s(E_i):E=\bigcup_i E_i\right\}.\end{equation*}\end{definition}

\begin{lemma}[see {\cite[Proposition 2.2]{Falconer}}] \label{l:bounds} Let $A\subseteq\RR^n$ be a Borel set, let $\nu$ be a finite Borel measure on $\RR^n$, and let $0<\lambda<\infty$. \begin{itemize} \item If $\limsup_{r\downarrow 0} \dfrac{\nu(B(x,r))}{r^s} \leq \lambda$ for all $x\in E$, then $\Haus^s(E) \geq \nu(E)/\lambda$.
\item If $\limsup_{r\downarrow 0} \dfrac{\nu(B(x,r))}{r^s} \geq \lambda$ for all $x\in E$, then $\Haus^s(E) \leq 2^s\nu(E)/\lambda$.
\item If $\liminf_{r\downarrow 0} \dfrac{\nu(B(x,r))}{r^s} \leq \lambda$ for all $x\in E$, then $\Pack^s(E) \geq 2^s\nu(E)/\lambda$.
\item If $\liminf_{r\downarrow 0} \dfrac{\nu(B(x,r))}{r^s} \geq \lambda$ for all $x\in E$, then $\Pack^s(E) \leq 2^s\nu(E)/\lambda$.
\end{itemize}\end{lemma}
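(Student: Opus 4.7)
The four inequalities are classical density--measure comparisons. The two ``soft'' directions (1) and (4) bound $\nu$ above by a geometric measure and follow directly from the definitions after a suitable stratification of $E$. The two ``hard'' directions (2) and (3) bound $\nu$ from below, and the plan for these rests on Vitali's covering theorem for Radon measures.

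For (1), fix $\e>0$ and introduce the Borel sets $E_k:=\{x\in E:\nu(B(x,r))\leq(\lambda+\e)r^s\text{ for all }0<r\leq 1/k\}$, which increase to $E$, so that $\nu(E_k)\uparrow\nu(E)$. If $\{U_i\}$ is any $(1/k)$-cover of $E_k$, each $U_i$ meeting $E_k$ fits inside a closed ball of radius $\diam U_i$ centered at a point $x_i\in U_i\cap E_k$, and the defining property of $E_k$ gives $\nu(U_i\cap E_k)\leq (\lambda+\e)(\diam U_i)^s$. Summing and taking the infimum over covers yields $\nu(E_k)\leq(\lambda+\e)\Haus^s(E)$; then let $k\to\infty$ and $\e\to 0$. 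The plan for (4) is the packing analogue: stratify $E=\bigcup_k E_k$ with $E_k=\{x\in E:\nu(B(x,r))\geq(\lambda-\e)r^s\text{ for }r\leq 1/k\}$, and for any disjoint $\delta$-packing with $\delta\leq 1/k$ and centers in $E_k$, disjointness and the density lower bound give $\sum(2r_i)^s\leq 2^s\nu(\bigcup_i B_i)/(\lambda-\e)$. Since $\bigcup_i B_i$ lies in the $\delta/2$-neighborhood of $E_k$, letting $\delta\to 0$ and combining with inner regularity of $\nu$ to replace the neighborhood's measure by $\nu(E)$ will bound $P^s(E_k)$ and hence $\Pack^s(E)$.

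For (2), fix $\e,\delta>0$, and use outer regularity of the finite measure $\nu$ to choose an open $G\supseteq E$ with $\nu(G)\leq\nu(E)+\e$. At each $x\in E$, the balls $B(x,r)\subseteq G$ with $2r<\delta$ and $\nu(B(x,r))\geq(\lambda-\e)r^s$ occur at arbitrarily small radii and form a fine Vitali cover of $E$. Extracting a disjoint countable subfamily $\{B(x_i,r_i)\}$ with $\nu(E\setminus\bigcup_i B(x_i,r_i))=0$, the density hypothesis yields
\[ \sum_i(\diam B(x_i,r_i))^s=\sum_i(2r_i)^s\leq\frac{2^s}{\lambda-\e}\nu(G)\leq\frac{2^s(\nu(E)+\e)}{\lambda-\e}, \]
which bounds $\Haus^s_\delta$ of the union of these balls. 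The scheme for (3) is dual: given a partition $E=\bigsqcup_j E_j$ entering the definition of $\Pack^s$, the balls at $x\in E_j$ satisfying $\nu(B(x,r))\leq(\lambda+\e)r^s$ form a fine cover, and a disjoint Vitali subfamily covering $E_j$ modulo a $\nu$-null set satisfies $\sum_i(2r_i)^s\geq 2^s\nu(E_j)/(\lambda+\e)$; hence $P^s(E_j)\geq 2^s\nu(E_j)/(\lambda+\e)$, and summing delivers $\Pack^s(E)\geq 2^s\nu(E)/(\lambda+\e)$.

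The main technical obstacle, I expect, is the treatment of the $\nu$-null residue $E\setminus\bigcup_i B(x_i,r_i)$ left over by Vitali's theorem in (2), which \emph{a priori} may still carry positive Hausdorff content. The plan to handle this is to iterate the Vitali extraction on the residue: at step $k$ choose an open neighborhood $G_k$ of the current residue with $\nu(G_k)<\e/2^k$ and repeat, producing a telescoping bound whose total excess is absorbed into $\e$. An analogous refinement appears in (4), where the passage from the trivial estimate $\nu(N_{\delta/2}(E_k))$ to the sharper $\nu(E)$ requires inner-regular approximation of $E$ by compact sets, together with careful control of how a packing can bleed into a surrounding neighborhood.
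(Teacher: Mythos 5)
The paper itself offers no proof of this lemma; it is cited verbatim from Falconer's \emph{Techniques in Fractal Geometry}, so there is no internal argument to compare against. Evaluating your attempt on its own terms: the stratification arguments for items (1) and (3) are sound, and the outline of (4) is pointed in roughly the right direction, though two technical ingredients are misidentified (see below). The serious problem is item (2).

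For (2), the Vitali covering theorem you invoke is the Radon-measure version, which produces a disjoint subfamily $\{B_i\}$ with $\nu\bigl(E\setminus\bigcup_i B_i\bigr)=0$ but says nothing about the $\Haus^s$-content of the residue. Your iteration does not close this gap: at every stage the residue $R_k$ remains $\nu$-null, the inherited density condition $\limsup_{r\downarrow 0}\nu(B(x,r))/r^s\geq\lambda$ on $R_k$ in no way forces $R_k$ to shrink, and the ``limit residue'' $R_\infty=\bigcap_k R_k$ is again $\nu$-null with completely uncontrolled $\Haus^s(R_\infty)$. (Controlling $\Haus^s$ of a $\nu$-null set satisfying the density hypothesis is \emph{exactly} the content of statement (2), so the step is circular.) The correct tool here is the Vitali covering theorem for Hausdorff measure (Mattila, \emph{Geometry of Sets and Measures in Euclidean Spaces}, Theorem~2.8; equivalently a careful run of the $5r$-covering lemma): from a fine cover of $E$ by closed balls one extracts a disjoint countable subfamily $\{B_i\}$ such that \emph{either} $\sum_i(\diam B_i)^s=\infty$ \emph{or} $\Haus^s\bigl(E\setminus\bigcup_i B_i\bigr)=0$. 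In the setting of (2), choosing the cover from balls inside an open $G\supseteq E$ with $\nu(G)<\nu(E)+\e$ and satisfying $\nu(B(x,r))>(\lambda-\e)r^s$ gives $\sum(\diam B_i)^s\leq\tfrac{2^s}{\lambda-\e}\nu(G)<\infty$, so the second alternative holds and the residue is harmless; no iteration is needed or wanted. That distinction between the two covering theorems is the missing idea.

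In (4), two smaller issues: you should use \emph{outer} regularity (to find an open $U\supseteq E$ with $\nu(U)$ close to $\nu(E)$, so that sufficiently small packing balls lie inside $U$), not inner regularity; and passing from the bound $P^s(E_{k,l})\leq 2^s\nu(U)/(\lambda-\e)$ on the doubly-stratified pieces $E_{k,l}=\{x\in E_k:B(x,1/l)\subseteq U\}$ to a bound on $\Pack^s(E)$ requires continuity from below of the packing \emph{measure} $\Pack^s$ on the nested Borel sets $E_{k,l}\uparrow E_k\uparrow E$, not any property of the premeasure $P^s$. As $\delta\downarrow0$ the neighborhood measure $\nu(N_{\delta/2}(E_k))$ tends to $\nu(\overline{E_k})$, which can vastly exceed $\nu(E)$, so the route you sketch does not reach the stated constant without this extra localization.
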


It is well known that H\"older continuous maps do not increase Hausdorff measures too severely. The same phenomenon is also true for packing measures. We include a proof of the following lemma for the reader's convenience.

\begin{lemma}\label{l:lpack} Let $E\subseteq\RR^m$. If $f:E\rightarrow\RR^n$ is (1/s)-H\"older, then \begin{equation*}P^{st}(f(E))\leq 2^{(s-1)t}(\Hold_{1/s} f)^{st}\, P^{t}(E)\quad\text{and}\quad \Pack^{st}(f(E))\leq 2^{(s-1)t}(\Hold_{1/s} f)^{st}\,\Pack^t(E),\end{equation*} where $\Hold_{1/s} f$ denotes the $(1/s)$-H\"older constant of $f$. \end{lemma}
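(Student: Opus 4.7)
The plan is to reduce the bound on $P^{st}(f(E))$ to a bound on packings in $E$ by explicitly converting any admissible packing of $f(E)$ into an admissible packing of $E$. Let $H := \Hold_{1/s} f$, so that $|f(x)-f(y)|\leq H|x-y|^{1/s}$ for all $x,y\in E$. Fix $\delta>0$ and let $\{B(y_i,r_i)\}$ be a disjoint packing realizing a value close to $P^{st}_{\delta}(f(E))$, with $y_i=f(x_i)$ for some $x_i\in E$ and $2r_i\leq\delta$. Set
\begin{equation*}
\rho_i := (r_i/H)^s.
\end{equation*}
Since $i\neq j$ implies $|f(x_i)-f(x_j)|>r_i+r_j$, the H\"older bound yields $H|x_i-x_j|^{1/s}>r_i+r_j$, i.e.\ $|x_i-x_j|>((r_i+r_j)/H)^s$.

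The key step is to recognize that the balls $B(x_i,\rho_i)$ are pairwise disjoint. This uses the superadditivity inequality $(a+b)^s\geq a^s+b^s$ valid for all $a,b\geq 0$ and $s\geq 1$ (the only regime in which the H\"older terminology is in force here). Applied to $a=r_i$, $b=r_j$, this gives
\begin{equation*}
|x_i-x_j|>\frac{(r_i+r_j)^s}{H^s}\geq \frac{r_i^s+r_j^s}{H^s}=\rho_i+\rho_j,
\end{equation*}
so $\{B(x_i,\rho_i)\}$ is indeed a packing of $E$. Moreover, $2\rho_i=2^{1-s}(2r_i)^s/H^s\leq 2^{1-s}\delta^s/H^s=:\delta'$, which tends to $0$ with $\delta$.

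Now convert radii: since $r_i=H\rho_i^{1/s}$, one has
\begin{equation*}
(2r_i)^{st}=2^{st}H^{st}\rho_i^{t}=2^{(s-1)t}H^{st}(2\rho_i)^{t}.
\end{equation*}
Summing over $i$ and taking the supremum over admissible packings of $E$ at scale $\delta'$,
\begin{equation*}
\sum_i (2r_i)^{st}\leq 2^{(s-1)t}H^{st}P^{t}_{\delta'}(E).
\end{equation*}
Taking first the supremum over packings of $f(E)$ at scale $\delta$ and then letting $\delta\downarrow 0$ (so that $\delta'\downarrow 0$) delivers $P^{st}(f(E))\leq 2^{(s-1)t}H^{st}P^{t}(E)$.

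For the packing measure bound, the standard countable decomposition argument suffices. Given any partition $E=\bigcup_i E_i$, one has $f(E)=\bigcup_i f(E_i)$, so by subadditivity and the premeasure estimate just established,
\begin{equation*}
\Pack^{st}(f(E))\leq \sum_i P^{st}(f(E_i))\leq 2^{(s-1)t}H^{st}\sum_i P^{t}(E_i).
\end{equation*}
Taking the infimum over such partitions yields the claimed inequality $\Pack^{st}(f(E))\leq 2^{(s-1)t}H^{st}\Pack^{t}(E)$. The only nontrivial point is the disjointness preservation of the pulled-back balls, which is where the hypothesis $s\geq 1$ enters through superadditivity; the exponent accounting is then a routine rewriting of powers.
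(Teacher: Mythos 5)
Your proof is correct and follows essentially the same route as the paper: convert an arbitrary $\delta$-packing of $f(E)$ centered at points $f(x_i)$ into a packing of $E$ by the balls $B^m(x_i,(r_i/H)^s)$, then do the exponent bookkeeping. The one place you genuinely diverge is in verifying disjointness of the pulled-back balls: the paper observes that $f$ maps $B^m(x_i,\rho_i)$ into $B^n(f(x_i),r_i)$ and concludes disjointness from disjointness of the images, but since $f$ is only defined on $E$ this literally shows only that the pulled-back balls have pairwise disjoint intersections with $E$, not that they are disjoint as subsets of $\RR^m$ (which is what the definition of $P^t_\eta$ requires); your superadditivity argument $(r_i+r_j)^s\geq r_i^s+r_j^s$ for $s\geq1$ pins down the genuine disjointness $|x_i-x_j|>\rho_i+\rho_j$ directly and is therefore the cleaner step. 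Both approaches yield the same constant $2^{(s-1)t}H^{st}$, and the passage from premeasure to packing measure via countable subadditivity is identical.
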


\begin{proof}Assume that $P^t(E)<\infty$ and $f:E\rightarrow\RR^n$ satisfies $|f(x)-f(y)|\leq H |x-y|^{1/s}$ for all $x,y\in E$. Given $\varepsilon>0$, pick $\eta>0$ such that $P^t_\eta(E)\leq P^t(E)+\varepsilon$. Fix $\delta>0$ such that $$ 2^{1-s} \left(\frac{\delta}{H}\right)^s\leq \eta$$  and let $\{B^n(f(x_i),r_i):i\geq 1\}$ be an arbitrary disjoint collection of balls in $\RR^n$ centered in $f(E)$ such that $2r_i\leq \delta$ for all $i\geq 1$. By the H\"older condition on $f$, \begin{equation*}f(B^m(x_i,(r_i/H)^s))\subseteq B^n(f(x_i),r_i)\quad\text{for all }i\geq 1.\end{equation*} Thus $\{B^m(x_i,(r_i/H)^s):i\geq 1\}$ is a disjoint collection of balls with centers in $E$ with $$2\left(\frac{r_i}{H}\right)^s \leq 2^{1-s} \left(\frac{\delta}{H}\right)^s\leq \eta.$$ Hence \begin{equation*} \sum_{i=1}^\infty (2r_i)^{st} = 2^{(s-1)t}H^{st}\sum_{i=1}^\infty \left(2\left(\frac{r_i}{H}\right)^s\right)^t \leq 2^{(s-1)t}H^{st}P^t_\eta(E)\leq 2^{(s-1)t}H^{st}(P^t(E)+\varepsilon).\end{equation*} Taking the supremum over all $\delta$-packings of $f(E)$, we obtain $$P^{st}_\delta(f(E))\leq 2^{(s-1)t}H^{st}(P^t(E)+\varepsilon).$$ Therefore, letting $\delta\rightarrow 0$ and $\varepsilon\rightarrow 0$, $P^{st}(f(E))\leq 2^{(s-1)t}H^{st}P^t(E)$. The corresponding inequality for the packing measure $\Pack^s$ follows immediately from the inequality for $P^s$.\end{proof}

The following lemma contains the first half of Theorem A. The special case $s=m$  appeared previously in \cite[Lemma 2.7]{BS1}. When the measure is of the form $\mu=\Haus^s\res E$ for some $s$-set $E\subseteq\RR^n$, this result also follows from \cite[Theorem 3.2]{MM1993}.

\begin{lemma}\label{l:pu} Let $1\leq m\leq n-1$ be integers and let $s\in[m,n]$. If $\mu$ is a Radon measure on $\RR^n$, then $$\underline{\mu}^s_{\,0}:=\mu\res\left\{x\in\RR^n:\liminf_{r\downarrow 0}\frac{\mu(B(x,r))}{r^s}=0\right\}$$ is singular to $(m/s)$-H\"older $m$-cubes.\end{lemma}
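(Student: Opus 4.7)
The plan is to bound the $s$-dimensional packing measure of a $(m/s)$-Hölder $m$-cube from above via Lemma \ref{l:lpack}, while bounding it from below on every set where the lower density vanishes via Lemma \ref{l:bounds}. The incompatibility of these bounds forces $\underline{\mu}^s_{\,0}$ to assign zero mass to every such cube.

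Let $E := \{x \in \RR^n : \liminf_{r\downarrow 0} \mu(B(x,r))/r^s = 0\}$, and fix an arbitrary $(m/s)$-Hölder $m$-cube $\Gamma = f([0,1]^m)$, where $f:[0,1]^m \to \RR^n$ is $(m/s)$-Hölder. It suffices to show $\mu(E \cap \Gamma) = 0$. Since $\mu$ is Radon, we may partition $\RR^n$ into countably many bounded sets and thereby assume that $\mu\res(E\cap\Gamma)$ is a \emph{finite} Borel measure; equivalently, we work with $A_k := E \cap \Gamma \cap B(0,k)$ for each $k \in \N$ and then take a union.

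First I would obtain the upper bound on $\Pack^s(\Gamma)$. Applying Lemma \ref{l:lpack} with H\"older exponent $m/s$ (so the ``$s$'' of the lemma is $s/m$) and with the ``$t$'' of the lemma equal to $m$, one gets
\[
\Pack^{s}(\Gamma) \;=\; \Pack^{(s/m)\cdot m}(f([0,1]^m)) \;\leq\; 2^{s-m}(\Hold_{m/s} f)^{s}\,\Pack^{m}([0,1]^m) \;<\; \infty,
\]
since $\Pack^m$ agrees with a constant multiple of Lebesgue measure on $\RR^m$ (this is where $s\geq m$ is crucial: the target packing dimension matches the source dimension).

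Next I would invoke the lower density-packing bound. By definition of $E$, for every $x \in A_k$ and every $\lambda>0$ we have $\liminf_{r\downarrow 0}(\mu\res A_k)(B(x,r))/r^s \leq \liminf_{r\downarrow 0}\mu(B(x,r))/r^s = 0 \leq \lambda$. The measure $\mu\res A_k$ is finite, so Lemma \ref{l:bounds} gives
\[
\Pack^s(A_k) \;\geq\; 2^s\,(\mu\res A_k)(A_k)/\lambda \;=\; 2^s\,\mu(A_k)/\lambda.
\]
Letting $\lambda \downarrow 0$, we conclude $\Pack^s(A_k)=\infty$ whenever $\mu(A_k) > 0$. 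But $A_k \subseteq \Gamma$, so $\Pack^s(A_k) \leq \Pack^s(\Gamma) < \infty$, forcing $\mu(A_k) = 0$. Taking $k\to\infty$ yields $\mu(E\cap\Gamma) = 0$, i.e.\ $\underline{\mu}^s_{\,0}(\Gamma)=0$. Since $\Gamma$ was an arbitrary $(m/s)$-Hölder $m$-cube, $\underline{\mu}^s_{\,0}$ is singular to $(m/s)$-Hölder $m$-cubes.

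The only mildly subtle point is the dimensional bookkeeping when applying Lemma \ref{l:lpack}: one must match the Hölder exponent of $f$ with the exponents on the two packing measures so that the domain is parameterized in its natural dimension $m$, where $\Pack^m([0,1]^m)$ is manifestly finite. Everything else is a direct application of the two quoted lemmas.
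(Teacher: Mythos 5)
Your argument is correct and follows essentially the same route as the paper's: localize to bounded sets to obtain a finite measure, use Lemma \ref{l:lpack} to bound $\Pack^s$ of the $(m/s)$-H\"older $m$-cube, and then invoke the lower-density-to-packing-measure bound from Lemma \ref{l:bounds} with $\lambda\downarrow 0$ to force the restricted measure of the cube to vanish. The only cosmetic difference is that the paper localizes the restricted measure first ($\nu_R=\mu\res A_R$ with $A_R=E\cap B(0,R)$) and then intersects with the cube, whereas you intersect with the cube from the start; both are fine, and your simplification $2^{((s/m)-1)m}=2^{s-m}$ is accurate.
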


\begin{proof} For a large radius $R>0$, let $A_R=\{x\in B(0,R):\liminf_{r\downarrow 0} r^{-s}\mu(B(x,r))=0\}$ and $\nu_R=\mu\res A_R$. Then $\nu_R$ is a finite Borel measure.
Let $f:[0,1]^m\rightarrow\RR^n$ be an arbitrary $(m/s)$-H\"older continuous map. By Lemma \ref{l:lpack}, $$\Pack^s(f([0,1]^m)) \leq 2^{((s/m)-1)m}(\Hold_{m/s} f)^s\,\Pack^m([0,1]^m)<\infty.$$ Let $\lambda>0$. Because $\liminf_{r\downarrow 0} r^{-s}\mu(B(x,r))=0\leq \lambda$ for all $x\in A_R\cap f([0,1]^m)$, we have \begin{equation*}2^s\mu(A_R\cap f([0,1]^m)) \leq \lambda \Pack^s(A_R\cap f([0,1]^m))\leq \lambda \Pack^s(f([0,1]^m))\end{equation*} by Lemma \ref{l:bounds}. Then, letting $\lambda\rightarrow 0$, we obtain $\nu_R(f([0,1]^m))=\mu(A_R\cap f([0,1]^m))=0$. Therefore, since measures are continuous from below, $$\underline{\mu}^s_{\,0}(f([0,1]^m))=\lim_{R\uparrow \infty}\nu_R(f([0,1]^m))=0$$ for every $(m/s)$-H\"older continuous map $f:[0,1]^m\rightarrow \RR^n$. In other words, the measure $\underline{\mu}^s_{\,0}$ is singular to $(m/s)$-H\"older $m$-cubes.
\end{proof}

\begin{corollary}\label{l:LD} Let $1\leq m\leq n-1$ be integers, let $s\in[m,n]$, and let $\mu$ be a Radon measure on $\RR^n$. If $\mu$ is carried by $(m/s)$-H\"older $m$-cubes, then $$\liminf_{r\downarrow 0} \frac{\mu(B(x,r))}{r^s}>0\quad\text{ for $\mu$-a.e.~$x\in\RR^n$}.$$\end{corollary}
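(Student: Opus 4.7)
The corollary is essentially the contrapositive of Lemma \ref{l:pu}, so my plan is simply to combine Lemma \ref{l:pu} with the definition of being carried by $(m/s)$-H\"older $m$-cubes. Set
\[ A := \left\{x\in\RR^n: \liminf_{r\downarrow 0}\frac{\mu(B(x,r))}{r^s}=0\right\}, \]
so that $\underline{\mu}^s_{\,0} = \mu\res A$. The conclusion we want is precisely that $\mu(A)=0$.

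By hypothesis, there exist countably many $(m/s)$-H\"older $m$-cubes $\Gamma_i\subseteq\RR^n$ such that $\mu(\RR^n\setminus\bigcup_i\Gamma_i)=0$. By Lemma \ref{l:pu}, $\underline{\mu}^s_{\,0}$ is singular to $(m/s)$-H\"older $m$-cubes; applying singularity to each $\Gamma_i$ gives $\underline{\mu}^s_{\,0}(\Gamma_i)=\mu(A\cap\Gamma_i)=0$ for every $i$. Countable subadditivity then yields
\[ \mu(A)=\mu\!\left(A\cap\bigcup_i\Gamma_i\right)+\mu\!\left(A\setminus\bigcup_i\Gamma_i\right)\leq \sum_i \mu(A\cap\Gamma_i)+\mu\!\left(\RR^n\setminus\bigcup_i\Gamma_i\right)=0. \]
Hence $\liminf_{r\downarrow 0}r^{-s}\mu(B(x,r))>0$ for $\mu$-a.e.\ $x\in\RR^n$, as desired.

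There is no real obstacle here: the entire content of the corollary has been packaged into Lemma \ref{l:pu}. The only subtlety is keeping the measurability bookkeeping straight (the set $A$ is Borel because $x\mapsto \mu(B(x,r))$ is Borel measurable for each fixed $r$, and the liminf can be taken along a countable sequence $r\downarrow 0$), so that the restriction $\mu\res A$ is a well-defined Borel measure and Lemma \ref{l:pu} applies verbatim.
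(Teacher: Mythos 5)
Your proof is correct and rests on the same key ingredient as the paper's, namely Lemma~\ref{l:pu}. The only difference is in the packaging: the paper routes through the abstract decomposition $\mu=\mu_{m\rightarrow s}+\mu_{m\rightarrow s}^\perp$ of Proposition~\ref{p:decomp} and notes that being carried forces $\mu_{m\rightarrow s}^\perp=0$ while $\underline{\mu}^s_{\,0}\leq\mu_{m\rightarrow s}^\perp$, whereas you unwrap the definition of ``carried by'' directly and finish with countable subadditivity; both are valid and your version is, if anything, a bit more self-contained.
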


\begin{proof} Let $\mu=\mu_{m\rightarrow s}+\mu^\perp_{m\rightarrow s}$ denote the decomposition of $\mu$ given by Proposition \ref{p:decomp}, where $\mu_{m\rightarrow s}$ is carried by $(m/s)$-H\"older $m$-cubes and $\mu^\perp_{m\rightarrow s}$ is singular to $(m/s)$-H\"older $m$-cubes. Then $\mu$ is carried by $(m/s)$-H\"older $m$-cubes if and only if $\mu^{\perp}_{m\rightarrow s}(\RR^n)=0$. Hence $$\mu\left(\left\{x\in\RR^n:\liminf_{r\downarrow 0}\frac{\mu(B(x,r))}{r^s}=0\right\}\right)=\underline{\mu}^s_{\,0}(\RR^n) \leq \mu^{\perp}_{m\rightarrow s}(\RR^n)=0,$$ where the inequality holds by Lemma \ref{l:pu}. Thus, the lower $s$-density is positive at $\mu$-almost every $x\in\RR^n$.\end{proof}

We now switch focus to the second half of Theorem \ref{thm:A}---points of rapidly infinite density of a Radon measure are carried by H\"older curves. To that end, for every Radon measure $\mu$ on $\RR^n$ and $1\leq s<\infty$, define the quantity $$S^s(\mu,x):=\int_0^1 \frac{r^s}{\mu(B(x,r))}\,\frac{dr}{r}\in[0,\infty]\quad\text{for all }x\in\RR^n.$$ Note that if $S^s(\mu,x)<\infty$, then $\lim_{r\downarrow 0}r^{-s}\mu(B(x,r))=\infty$.

\begin{lemma}\label{l:Ss} Let $\mu$ be a Radon measure on $\RR^n$. Given parameters $1\leq s \leq n$, $0\leq N<\infty$, $1\leq P<\infty$, $\theta>0$, and $x_0\in\RR^n$, consider the sets  $$A:=\left\{x\in B(x_0,1/2): S^s(\mu,x)\leq N \text{ and } \mu(B(x,3r))\leq P\mu(B(x,r))\text{ for all }0<r\leq 1\right\}$$ and $$A':=\left\{x\in A: \mu(A\cap B(x,r)) \geq \theta \mu(B(x,r))\text{ for all }0<r\leq 1\right\}.$$ Then there exists a tree of sets $\mathcal{T}$ whose elements are balls centered in $A'$ such that $$\leaves(\mathcal{T})\supseteq A'$$ and $$\sum_{E\in\mathcal{T}} (\diam E)^s \leq \frac{2^{s+1}sNP\mu(A)}{\theta}<\infty.$$\end{lemma}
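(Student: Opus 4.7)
My plan is to realize $\mathcal{T}$ via maximal dyadic separated nets sitting inside $A'$, and then to deduce the summability estimate by integrating the hypothesis $S^s(\mu,\cdot)\le N$ against $\mu\res A$, using the doubling and density conditions to convert packing counts at each scale into a measure-theoretic quantity.

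\emph{Construction of $\mathcal{T}$.} Assuming $A'\neq\emptyset$ (else there is nothing to prove) and taking $\rho=1/2$, I will let $Z_k\subseteq A'$ be a maximal $2^{-k}$-separated subset for each $k\ge 0$. Since $A'\subseteq B(x_0,1/2)$ has diameter at most $1$, the set $Z_0=\{z_0\}$ is automatically a singleton, providing the unique root $E_{0,z_0}:=B(z_0,1)$. At level $k\ge 1$ the elements of $\mathcal{T}_k$ will be $E_{k,z}:=B(z,2^{-k})$ for $z\in Z_k$, and the parent of $E_{k,z}$ will be $E_{k-1,\pi(z)}$, where $\pi(z)\in Z_{k-1}$ is any point with $|z-\pi(z)|<2^{-(k-1)}$; existence follows from the maximality of $Z_{k-1}$ applied to $z\in A'$. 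The axioms of Definition \ref{def:tree} are then immediate: $(P_1,P_2,\rho)=(2,2,1/2)$, and since both $E_{k,z}$ and $E_{k-1,\pi(z)}$ contain $z$, any $P_3>0$ works.

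\emph{Every $y\in A'$ is a leaf.} Fix $y\in A'$ and let $T_k:=\{z\in Z_k:|y-z|<2\cdot 2^{-k}\}$. Maximality of $Z_k$ produces some $z\in Z_k$ with $|y-z|<2^{-k}$, so $T_k\neq\emptyset$; and the triangle inequality, combined with $|z-\pi(z)|<2^{-(k-1)}$, gives $\pi(T_k)\subseteq T_{k-1}$. Since $T_k$ is $2^{-k}$-separated and lies in a ball of radius $2\cdot 2^{-k}$, a volume bound in $\RR^n$ yields $|T_k|\le 5^n$. The subtree of $\mathcal{T}$ carried by $\bigsqcup_k T_k$ is thus rooted, finitely branching, and populated at every level, so König's lemma produces an infinite branch $(z_k)$ with $z_k\in T_k$ and $\pi(z_k)=z_{k-1}$. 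As $|y-z_k|<2^{1-k}\to 0$, we conclude $y\in\leaves(\mathcal{T})$.

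\emph{Summability.} The balls $\{B(z,2^{-k-1})\}_{z\in Z_k}$ are pairwise disjoint. For any $x\in A\cap B(z,2^{-k-1})$ and $r\in(0,2^{-k}]$, the inclusion $B(x,r)\subseteq B(z,3\cdot 2^{-k-1})$, the doubling bound $\mu(B(z,3\cdot 2^{-k-1}))\le P\mu(B(z,2^{-k-1}))$ at $z\in A$, and the density bound $\mu(A\cap B(z,2^{-k-1}))\ge\theta\mu(B(z,2^{-k-1}))$ at $z\in A'$ combine, after summation over $z\in Z_k$, to yield
\[
\int_A\frac{d\mu(x)}{\mu(B(x,r))}\;\ge\;\frac{\theta}{P}|Z_k|\qquad(r\in(0,2^{-k}]).
\]
Writing $S^s(\mu,x)=\int_0^1 r^{s-1}/\mu(B(x,r))\,dr$, integrating over $A$, and using Fubini together with the above on each dyadic annulus $[2^{-k-1},2^{-k}]$ gives
\[
N\mu(A)\;\ge\;\int_A S^s(\mu,x)\,d\mu\;\ge\;\frac{\theta}{P}\sum_{k\ge 0}|Z_k|\int_{2^{-k-1}}^{2^{-k}}r^{s-1}\,dr\;=\;\frac{\theta(2^s-1)}{Ps}\sum_{k\ge 0}|Z_k|\,2^{-(k+1)s}.
\]
Since $(\diam E_{k,z})^s=2^s\cdot 2^{-ks}$ and $2^s-1\ge 2^{s-1}$ for $s\ge 1$, a direct rearrangement delivers $\sum_{E\in\mathcal{T}}(\diam E)^s\le 2^{s+1}sNP\mu(A)/\theta$.

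\emph{Main obstacle.} The subtle point is that the nets $Z_k$ at successive scales are not nested, so a given $z\in Z_k$ has no canonical ancestor in $Z_{k-1}$. The purpose of the König step above is to propagate the local ``nearest neighbor'' data at each scale into a globally compatible infinite branch approaching $y$. A natural alternative --- building $\mathcal{T}$ top-down by choosing children inside each parent ball --- sidesteps this, but then the counting of $|\mathcal{T}_k|$ requires controlling overlap multiplicities of cousin balls across different branches, which is messier than the global packing estimate exploited here.
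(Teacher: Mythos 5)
Your proof is correct and follows essentially the same route as the paper: a tree built from maximal $2^{-k}$-separated nets in $A'$, with the summability bound obtained by integrating $S^s(\mu,\cdot)$ over $A$ via Tonelli and using disjointness of the half-radius balls together with the tripling and density hypotheses to get $\theta|Z_k|/P$ at each scale, yielding the same constant $2^{s+1}sNP/\theta$. The one place where you go beyond the paper is the K\"onig-lemma argument for $\leaves(\mathcal{T})\supseteq A'$; the paper asserts this inclusion without detail, so your cone/subtree argument is a welcome clarification of a genuine subtlety (the nets at successive scales are not nested, so the parent map does not automatically track a given $y\in A'$), not a departure in method.
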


\begin{proof} For each $k\geq 0$, let $A'_k$ be a maximal $2^{-k}$ separated subset of $A'$ and define $$\mathcal{T}_k:=\{B(y,2^{-k}):y\in A'_k\}.$$ For each $k\geq 1$ and each $B(y,2^{-k})\in \mathcal{T}_k$, choose $y^\uparrow \in A_{k-1}'$ such that $|y-y^\uparrow|< 2^{-(k-1)}$ and set $B(y,2^{-k})^\uparrow = B(y^\uparrow, 2^{-(k-1)})$. Then $\mathcal{T}=\bigcup_{k=0}^\infty\mathcal{T}_k$ is a tree of sets in the sense of Definition \ref{def:tree} and $\leaves(\mathcal{T})\supseteq A'$.

To estimate the sum of diameters, note that \begin{equation}\begin{split}\label{e:Abound} N\mu(A) \geq \int_A S^s(\mu,x)\,d\mu(x)&=\int_A\int_0^1 \frac{r^s}{\mu(B(x,r))}\,\frac{dr}{r}d\mu(x)\\ &=\sum_{k=0}^\infty \int_{2^{-(k+1)}}^{2^{-k}} r^s\int_A \frac{1}{\mu(B(x,r))}\,d\mu(x)\frac{dr}{r},\end{split}\end{equation} where we used Tonelli's theorem to exchange the order of integration. Our task will be to bound the right hand side of \eqref{e:Abound} from below by a constant times $\sum_{E\in\mathcal{T}}(\diam E)^s$. To that end, fix an integer $k\geq 0$ and $r\in[2^{-(k+1)},2^{-k}]$. Since $A'_k$ is a $2^{-k}$ separated set in $A'$ and $A'\subseteq A$, it follows that $$\int_A \frac{1}{\mu(B(x,r))}d\mu(x) \geq \sum_{y\in A'_k} \int_{A\cap B(y,2^{-(k+1)})} \frac{1}{\mu(B(x,2^{-k}))}\,d\mu(x).$$ By the triangle inequality, $B(x,2^{-k})\subseteq B(y, 3\cdot 2^{-(k+1)})$ whenever $x\in B(y, 2^{-(k+1)})$. Hence  $$\mu(B(x,2^{-k})) \leq \mu(B(y,3\cdot 2^{-(k+1)})) \leq P \mu(B(y,2^{-(k+1)})),$$ where the $P$ is the doubling parameter. Thus,
\begin{equation}\label{e:Abound2}\int_A \frac{1}{\mu(B(x,r))}d\mu(x) \geq \sum_{y\in A'_k} \frac{1}{P}\int_{A\cap B(y,2^{-(k+1)})} \frac{1}{\mu(B(y,2^{-(k+1)}))}\,d\mu(x) \geq \sum_{y\in A_k'} \frac{\theta}{P}.\end{equation} We have shown that \eqref{e:Abound2} holds for all integers $k\geq 0$ and $r\in[2^{-(k+1)},2^{-k}]$. Combining \eqref{e:Abound} and \eqref{e:Abound2}, we obtain $$\frac{PN\mu(A)}{\theta} \geq \sum_{k=0}^\infty \sum_{y\in A'_k} \int_{2^{-(k+1)}}^{2^{-k}} r^s\,\frac{dr}{r}=\frac{1-2^{-s}}{s}\sum_{k=0}^\infty\sum_{y\in A_k'} (2^{-k})^s \geq \frac{1}{2s} \sum_{E\in\mathcal{T}} \left(\frac{\diam E}{2}\right)^s,$$ as desired.\end{proof}

The second half of Theorem \ref{thm:A} is contained in the following theorem.

\begin{theorem} \label{t:Ss} Let $\mu$ be a Radon measure on $\RR^n$ and let $1\leq s\leq n$. Then $$\underline{\mu}^s_{\,\infty}:=\mu\res\left\{x\in\RR^n: S^s(\mu,x)<\infty\text{ and } \limsup_{r\downarrow 0}\frac{\mu(B(x,2r))}{\mu(B(x,r))}<\infty \right\}$$ is carried by $(1/s)$-H\"older curves. Moreover, there exist countably many $(1/s)$-H\"older curves $\Gamma_i\subseteq\RR^n$ and compact sets $K_i\subseteq \Gamma_i$ with $\Haus^s(K_i)=0$ such that $\underline{\mu}^s_{\,\infty}(\RR^n\setminus\bigcup_i K_i)=0.$ \end{theorem}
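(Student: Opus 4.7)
The plan is to combine Lemma \ref{l:Ss}, which produces a tree of balls with summable $s$-th power of diameters on a subset where $S^s(\mu,\cdot)$, the doubling ratio, and the relative $\mu$-density of the subset are all uniformly controlled, with Theorem \ref{t:diam^s}, which parametrises the leaves of such a tree by a single $(1/s)$-H\"older curve. Writing $E$ for the set appearing in the definition of $\underline{\mu}^s_{\,\infty}$, the task reduces to covering $E$, up to a $\mu$-null set, by countably many pieces $A'_i$ each of which satisfies the hypotheses of Lemma \ref{l:Ss}.

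First, I would discretise the pointwise parameters. Every $x\in E$ lies in some set of the form
\[ E_{N,P,k,x_0} := \bigl\{x\in E\cap B(x_0,1/2) : S^s(\mu,x)\leq N \text{ and } \mu(B(x,2r))\leq P\mu(B(x,r))\ \forall\,0<r\leq 2/k\bigr\}, \]
as $N,P,k$ range over positive integers and $x_0$ over a countable dense subset of $\RR^n$. Rescaling each such piece by the factor $k$ yields a bounded set $A$ satisfying the two uniform bounds demanded by Lemma \ref{l:Ss}; in particular, the factor-three doubling required there follows from the factor-two doubling via $\mu(B(x,3r))\leq\mu(B(x,4r))\leq P^2\mu(B(x,r))$. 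By the Lebesgue differentiation theorem for Radon measures, $\mu(A\cap B(x,r))/\mu(B(x,r))\to 1$ at $\mu$-a.e.~$x\in A$, so a further countable decomposition (indexed by the scale below which the density exceeds $1/2$) followed by another rescaling to normalise that scale to $1$ produces subsets $A'\subseteq A$ of the form appearing in Lemma \ref{l:Ss} whose union is $\mu$-co-null in $E$.

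Lemma \ref{l:Ss} applied to each $A'_i$ then produces trees $\mathcal{T}_i$ of closed balls, centred in $A'_i$ and nested in a bounded region, with $A'_i\subseteq\leaves(\mathcal{T}_i)$ and $\sum_{B\in\mathcal{T}_i}(\diam B)^s<\infty$. Theorem \ref{t:diam^s} furnishes $(1/s)$-H\"older curves $\Gamma_i\supseteq\leaves(\mathcal{T}_i)$ with $\Haus^s(\leaves(\mathcal{T}_i))=0$, which establishes the first assertion after undoing the rescalings. For the \emph{moreover} conclusion I would take $K_i:=\leaves(\mathcal{T}_i)$: the construction in Lemma \ref{l:Ss} uses $2^{-k}$-separated nets in $\RR^n$, so the tree has bounded branching at each level, which forces $\leaves(\mathcal{T}_i)$ to be closed by a K\"onig-style diagonal/compactness argument, and hence compact (being bounded). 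Each $K_i\subseteq\Gamma_i$ is then compact with $\Haus^s(K_i)=0$ and contains $A'_i$, and the union of the $K_i$ captures $\underline{\mu}^s_{\,\infty}$-a.e.~point.

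The main obstacle I foresee is not any single step but the bookkeeping in the second paragraph: extracting, from the pointwise statements of finite $S^s$, finite doubling ratio, and the Lebesgue density property, a countable family of subsets on which all three quantities are controlled \emph{uniformly down to a uniform scale}, while simultaneously arranging the normalisations so that Lemma \ref{l:Ss} can be applied verbatim. Once this uniformisation is carried out, Lemma \ref{l:Ss} and Theorem \ref{t:diam^s} combine mechanically to yield the conclusion.
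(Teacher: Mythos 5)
Your proof is correct and follows essentially the same route as the paper: decompose the set on which $S^s(\mu,\cdot)$ and the doubling ratio are finite into countably many pieces on which $S^s$, the doubling ratio, and the relative density of the piece are all uniformly controlled, then apply Lemma \ref{l:Ss} followed by Theorem \ref{t:diam^s} to each piece. The paper avoids your rescaling step by building the uniform scale into the definitions of $A$ and $A'_\theta$ directly (observing that the pointwise density limit forces a uniform lower bound $\theta_x>0$ over all of $(0,1]$ by monotonicity of $r\mapsto\mu(B(x,r))$, so that $\bigcup_k A'_{1/k}$ is $\mu$-co-null in $A$), and it simply asserts the compactness of $\leaves(\mathcal{T})$ that your K\"onig-type argument supplies.
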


\begin{proof} By writing the set $$\left\{x\in\RR^n: S^s(\mu,x)<\infty\text{ and }\limsup_{r\downarrow 0}\frac{\mu(B(x,2r))}{\mu(B(x,r))}<\infty\right\}$$ as a countable union of sets of the form $$A:=\left\{x\in B(x_0,1/2): S^s(\mu,x)\leq N \text{ and } \mu(B(x,3r))\leq P\mu(B(x,r))\text{ for all }0<r\leq 1\right\},$$ we see that it suffices to prove $\mu\res A$ is carried by compact $\Haus^s$ null subsets of $(1/s)$-H\"older curves for each choice of parameters $0\leq N<\infty$, $1\leq P<\infty$, and $x_0\in\RR^n$. Fix values for $N$, $P$, and $x_0$, and for all $\theta\in(0,1)$ define $$A'_\theta:=\left\{x\in A: \mu(A\cap B(x,r)) \geq \theta \mu(B(x,r))\text{ for all }0<r\leq 1\right\}.$$ By a standard density theorem for Radon measures (e.g.~ see \cite[Corollary 2.14]{Mattila}), $$\lim_{r\downarrow 0} \frac{\mu(A\cap B(x,r))}{\mu(B(x,r))}=1\quad\text{for $\mu$-a.e. $x\in A$}.$$ Note that $$\left\{x\in A: \lim_{r\downarrow 0} \frac{\mu(A\cap B(x,r))}{\mu(B(x,r))}=1\right\}\subseteq \bigcup_{k=1}^\infty A'_{1/k}.$$ Hence $\mu(A\setminus \bigcup_{k=1}^\infty A'_{1/k})=0$, and so to prove $\mu\res A$ is carried by compact $\Haus^s$ null subsets of $(1/s)$-H\"older curves, it suffices to prove $\mu\res A'_\theta$ has that same property for all $\theta\in(0,1)$. Fix $\theta\in(0,1)$ and apply Lemma \ref{l:Ss} to find a tree of sets $\mathcal{T}$ such that $\leaves(\mathcal{T})\supseteq A'_\theta$ and $$\sum_{E\in\mathcal{T}}(\diam E)^s<\infty.$$ By Theorem \ref{t:diam^s}, $\Haus^s(\leaves(\mathcal{T}))=0$ and there exists a $(1/s)$-H\"older curve $\Gamma$ such that $$\Gamma\supseteq\leaves(\mathcal{T})\supseteq A'_\theta.$$ It follows that $$\mu\res A'_\theta(\RR^n\setminus \Gamma)\leq \mu\res A'_\theta(\RR^n\setminus \leaves(\mathcal{T})) \leq \mu\res A'_\theta(\RR^n\setminus A'_\theta)=\mu(A'_\theta\setminus A'_\theta)=0.$$ Thus, $\mu\res A'_\theta$ is carried by a compact $\Haus^s$ null subset ($\leaves(\mathcal{T})$) of a $(1/s)$-H\"older curve. The theorem follows by taking a suitable choice of countably many parameter values.\end{proof}

We now observe that it is possible to remove the doubling condition from Theorem \ref{t:Ss} by working with dyadic density ratios instead of spherical density ratios. For every Radon measure $\mu$ on $\RR^n$ and $1\leq s<\infty$, define the quantity $$S^s_\Delta(\mu,x):= \sum_{Q\in\Delta} \frac{(\diam Q)^s}{\mu(Q)}\chi_Q(x)\in[0,\infty]\quad\text{for all }x\in\RR^n,$$ where $\Delta$ denotes a system of \emph{half-open} dyadic cubes in $\RR^n$ of side length at most $1$.

The following localization lemma is a particular instance of \cite[Lemma 5.6]{BS3}.

\begin{lemma}\label{l:Qlocal} Let $\mu$ be a Radon measure on $\RR^n$. Given a cube $Q_0\in\Delta$ of side length 1 such that $\eta:=\mu(Q_0)>0$, $N<\infty$, and $0<\varepsilon<1/\eta$, there exists a subtree $\mathcal{G}$ of the tree of dyadic cubes $\{Q\in\Delta:Q\subseteq Q_0\}$ with the following properties. \begin{enumerate}
\item The sets $A:=\{x\in Q_0: S^s_\Delta(\mu,x)<N\}$ and $A':=A \cap \leaves(\mathcal{G})$ have comparable measure: $$\mu(A') \geq (1-\varepsilon\eta)\mu(A).$$
\item The tree $\mathcal{G}$ is $s$-summable: $$\mathcal{S}^s(\mathcal{G})=\sum_{Q\in\mathcal{G}} (\diam Q)^s<\infty.$$
\end{enumerate}\end{lemma}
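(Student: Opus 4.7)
The plan is a Calder\'on--Zygmund-style stopping time that prunes the tree of dyadic subcubes of $Q_0$ at cubes on which $A$ has too low a relative density. If $\mu(A)=0$, I take $\mathcal{G}=\{Q_0\}$ and there is nothing to prove. Otherwise, assuming $\mu(A)>0$, let $\mathcal{B}$ denote the collection of dyadic cubes $Q\subseteq Q_0$ that are maximal with respect to inclusion subject to
\[ \mu(Q\cap A) \leq \varepsilon\mu(A)\mu(Q). \]
The hypothesis $\varepsilon<1/\eta$ ensures $Q_0\notin\mathcal{B}$, because $\mu(Q_0\cap A)=\mu(A)>\varepsilon\mu(A)\eta=\varepsilon\mu(A)\mu(Q_0)$; thus maximal bad cubes exist (and are pairwise disjoint, being dyadic). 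Define $\mathcal{G}$ to be the family of all dyadic cubes $Q\subseteq Q_0$ that are not contained in any cube of $\mathcal{B}$. Since the property of being a descendant of a $\mathcal{B}$-cube passes to further descendants, $\mathcal{G}$ is a subtree of the dyadic tree rooted at $Q_0$ in the sense of Definition \ref{def:tree}.

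My next step is to identify $\leaves(\mathcal{G})=Q_0\setminus\bigcup_{Q\in\mathcal{B}} Q$. For $x\in Q_0$ outside every $\mathcal{B}$-cube, the unique dyadic branch through $x$ consists entirely of cubes not contained in $\mathcal{B}$, so it lies in $\mathcal{G}$ and $x$ is a leaf; conversely, if $x\in Q$ for some $Q\in\mathcal{B}$, then every sufficiently small dyadic ancestor of $x$ is contained in $Q$ and hence excluded from $\mathcal{G}$, so $x$ cannot be a leaf. Therefore $A'=A\cap\leaves(\mathcal{G})=A\setminus\bigcup_{Q\in\mathcal{B}}Q$, and disjointness of $\mathcal{B}$ yields
\[ \mu(A\setminus A') \leq \sum_{Q\in\mathcal{B}}\mu(Q\cap A) \leq \varepsilon\mu(A)\sum_{Q\in\mathcal{B}}\mu(Q) \leq \varepsilon\mu(A)\mu(Q_0) = \varepsilon\eta\mu(A), \]
which is exactly part (1).

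For part (2), every $Q\in\mathcal{G}$ violates the stopping condition, so $\mu(Q\cap A)>\varepsilon\mu(A)\mu(Q)$ (and in particular $\mu(Q)>0$). Consequently, by Tonelli and the pointwise bound $S^s_\Delta(\mu,x)<N$ on $A$,
\[ \sum_{Q\in\mathcal{G}}(\diam Q)^s \leq \frac{1}{\varepsilon\mu(A)}\sum_{Q\in\mathcal{G}}\frac{(\diam Q)^s}{\mu(Q)}\mu(Q\cap A) = \frac{1}{\varepsilon\mu(A)}\int_A\sum_{\substack{Q\in\mathcal{G}\\ Q\ni x}}\frac{(\diam Q)^s}{\mu(Q)}\,d\mu(x) \leq \frac{N}{\varepsilon}<\infty. \]
There is no serious obstacle; the only delicate point is calibrating the stopping threshold to $\varepsilon\mu(A)$ rather than $\varepsilon$, which is precisely what makes the measure loss in (1) take the correct form $(1-\varepsilon\eta)\mu(A)$ while still letting (2) absorb the factor $\mu(A)$ in the denominator against the bound $\int_A S^s_\Delta\,d\mu\leq N\mu(A)$.
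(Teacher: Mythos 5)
Your proof is correct, and it is a welcome addition because the paper itself does not supply an argument for Lemma \ref{l:Qlocal} --- it merely points the reader to \cite[Lemma 3.2]{BS2} (the case $s=1$) or \cite[Lemma 5.6]{BS3} (an abstract version with a general weight function $b(Q)$). Your Calder\'on--Zygmund stopping-time construction, with the threshold calibrated to $\varepsilon\mu(A)$, is exactly the mechanism those references use; the key observations --- that $\varepsilon\eta<1$ prevents $Q_0$ from being a stopping cube, that the stopping cubes are disjoint and carry a small fraction of $\mu\res A$, and that the reverse stopping inequality on every cube of $\mathcal{G}$ lets Tonelli convert $\mathcal{S}^s(\mathcal{G})$ into the bounded integral $\int_A S^s_\Delta\,d\mu$ --- are all in place.

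One small overstatement that does not affect validity: the identity $\leaves(\mathcal{G})=Q_0\setminus\bigcup_{Q\in\mathcal{B}}Q$ need not hold. With half-open dyadic cubes, a boundary point of a stopping cube $Q\in\mathcal{B}$ (say, on its closed face) can still be a leaf of $\mathcal{G}$ via an infinite chain of $\mathcal{G}$-cubes approaching $Q$ from the outside, so in general one only has $\leaves(\mathcal{G})\supseteq Q_0\setminus\bigcup_{Q\in\mathcal{B}}Q$. Since the measure estimate only uses $A'\supseteq A\setminus\bigcup_{Q\in\mathcal{B}}Q$, hence $A\setminus A'\subseteq\bigcup_{Q\in\mathcal{B}}(Q\cap A)$, the inclusion is all that is needed and your conclusion $\mu(A')\geq(1-\varepsilon\eta)\mu(A)$ stands. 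It would be cleaner to state and use only the inclusion rather than the equality.
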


\begin{proof} Either modify the proof of \cite[Lemma 3.2]{BS2} or apply \cite[Lemma 5.6]{BS3} with $\mathcal{T}:=\{Q\in\Delta:Q\subseteq Q_0\}$, the tree of dyadic cubes contained in $Q_0$, and the function $b(Q):=(\diam Q)^s$ for all $Q\in\mathcal{T}$.\end{proof}

Using Lemma \ref{l:Qlocal} in conjunction with Theorem \ref{t:diam^s}, one can verify the following variant of Theorem \ref{t:Ss}. The case $s=1$ first appeared in \cite[Theorem 3.1]{BS2}.

\begin{theorem}\label{t:fast} Let $\mu$ be a Radon measure on $\RR^n$ and let $1\leq s\leq n$. Then $$\nu:=\mu \res\left\{x\in\RR^n: S^s_\Delta(\mu,x)<\infty\right\}$$ is carried by $(1/s)$-H\"older curves. Moreover, there exist countably many $(1/s)$-H\"older curves $\Gamma_i\subseteq\RR^n$ and compact sets $K_i\subseteq \Gamma_i$ with $\Haus^s(K_i)=0$ such that $\nu(\RR^n\setminus\bigcup_i K_i)=0.$\end{theorem}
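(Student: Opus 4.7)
The plan is to reduce Theorem \ref{t:fast} to the combination of Lemma \ref{l:Qlocal} (which produces a summable dyadic subtree whose leaves capture most of the $\mu$-mass of the relevant level set) and Theorem \ref{t:diam^s} (which draws a $(1/s)$-H\"older curve through the leaves of any $s$-summable tree of sets and simultaneously shows the leaves form an $\Haus^s$-null set). The only subtlety is bookkeeping: Lemma \ref{l:Qlocal} only gives us $(1-\varepsilon\eta)\mu(A)$ of the mass at a time, so we need to iterate with $\varepsilon \downarrow 0$ and take a countable union.

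First I would decompose the set $X:=\{x\in\RR^n: S^s_\Delta(\mu,x)<\infty\}$ as a countable union. Enumerate the unit cubes $Q_0\in\Delta$ and, for each such $Q_0$ and each $N\in\N$, set
\[ A_{Q_0,N}:=\{x\in Q_0: S^s_\Delta(\mu,x)<N\}.\]
Then $X=\bigcup_{Q_0,N} A_{Q_0,N}$, so it suffices to show that for every $Q_0$ and $N$, the measure $\mu\res A_{Q_0,N}$ is carried by $\Haus^s$-null compact subsets of $(1/s)$-H\"older curves. Fix such $Q_0$ and $N$, and write $\eta:=\mu(Q_0)$; we may assume $\eta>0$.

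For each integer $k\geq 1$, apply Lemma \ref{l:Qlocal} with $\varepsilon:=2^{-k}/\eta$ (and the given $N$) to obtain a subtree $\mathcal{G}_k$ of the dyadic tree beneath $Q_0$ such that
\[ \mu(A_{Q_0,N}\setminus\leaves(\mathcal{G}_k))\leq 2^{-k}\mu(A_{Q_0,N})\qquad\text{and}\qquad \mathcal{S}^s(\mathcal{G}_k)<\infty. \]
Since dyadic cubes satisfy the geometric diameter and gap-diameter conditions of Definition \ref{def:tree} (with $\rho=1/2$ and universal constants $P_1,P_2,P_3$ depending only on $n$), $\mathcal{G}_k$ is a tree of sets in the sense of Definition \ref{def:tree}. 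Invoking Theorem \ref{t:diam^s}, the compact set $K_k:=\overline{\leaves(\mathcal{G}_k)}$ satisfies $\Haus^s(K_k)=0$ and is contained in the image $\Gamma_k$ of some $(1/s)$-H\"older map $f_k:[0,1]\to\RR^n$.

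Finally, setting $K:=\bigcup_{k\geq 1}K_k$, one has $\mu(A_{Q_0,N}\setminus K)\leq \inf_k 2^{-k}\mu(A_{Q_0,N})=0$. Combined with the countable choice of $(Q_0,N)$, this exhibits $\nu=\mu\res X$ as carried by countably many $\Haus^s$-null compact subsets $K_k\subseteq\Gamma_k$ of $(1/s)$-H\"older curves, as required. The main (very mild) obstacle is the first step: recognizing that Lemma \ref{l:Qlocal} produces the mass capture only up to an $\varepsilon$-error and that this must be handled by countable iteration, rather than in a single pass as in Theorem \ref{t:Ss} where spherical density ratios and the Lebesgue--Besicovitch differentiation theorem for Radon measures permit a direct passage to the full mass.
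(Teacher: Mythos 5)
Your proposal is correct and follows exactly the route the paper indicates for this theorem, which it leaves to the reader: localize to dyadic unit cubes $Q_0$ and thresholds $N$, invoke Lemma \ref{l:Qlocal} with a sequence $\varepsilon_k\downarrow 0$ to capture all but a vanishing fraction of $\mu\res A_{Q_0,N}$ in the leaves of $s$-summable dyadic subtrees, and then apply Theorem \ref{t:diam^s} to cover those leaves by $\Haus^s$-null compact subsets of $(1/s)$-H\"older curves. Your remark that iteration in $\varepsilon$ is needed here (unlike in Theorem \ref{t:Ss}, where the Lebesgue density theorem for Radon measures handles the residual mass directly) is the right observation, and your choice to pass to $\overline{\leaves(\mathcal{G}_k)}$ is harmless since the dyadic tree has finite branching at each level, so the Hausdorff-content bound in the proof of Theorem \ref{t:diam^s} applies to the closure as well.
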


\section{Densities and Assouad dimension (Proof of Theorems \ref{thm:B} and \ref{thm:C})}
\label{s:proof-BC}

Theorems \ref{thm:B} and \ref{thm:C} follow from the bi-Lipschitz and H\"older parameterization theorems in \S\ref{s:surfaces} and the following connection between Hausdorff densities and Assouad dimension.

\begin{lemma}\label{lem:1}
Let $\mu$ be a Radon measure on $\RR^n$ and let $t\in[0,n]$. If $E\subseteq\RR^n$ and
\begin{equation}\label{eq:reg-Assouad}
a r^t\leq \mu(B(x,r))\leq b r^t\quad\text{for all $0<r\leq 2\diam E$ and $x\in E$},
\end{equation}
for some constants $0<a\leq b<\infty$, then the Assouad dimension of  $E$ is at most $t$. Additionally, if $\mu(\R^n\setminus E) = 0$, then the Assouad dimension of $E$ is $t$.\end{lemma}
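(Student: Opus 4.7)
The plan is to establish the two inequalities $\dim_A E \leq t$ and, under the extra assumption $\mu(\R^n\setminus E)=0$, the matching lower bound $\dim_A E \geq t$. The first comes from a direct packing argument using Definition \ref{def:assouad}; the second combines Lemma \ref{l:bounds} with the standard fact that Assouad dimension dominates Hausdorff dimension (see \cite{Luukk}).

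For the upper bound, I would verify $(C,t)$-homogeneity of $E$ for an explicit $C=C(a,b,t)$. Given a bounded $A\subseteq E$ with $D:=\diam A>0$ and $\delta\in(0,1)$, I would choose a maximal $(\delta D/2)$-separated subset $\{x_1,\dots,x_N\}\subseteq A$. The balls $B(x_i,\delta D/4)$ are then pairwise disjoint, each centered in $E$, and all contained in $B(x_1,2D)$. Since $A\subseteq E$ forces $D\leq\diam E$, every radius that appears here is at most $2\diam E$, so hypothesis \eqref{eq:reg-Assouad} supplies both a lower bound on each small ball and an upper bound on the enclosing one; comparing volumes gives $N\leq (b/a)\,8^{t}\delta^{-t}$. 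By maximality of the net, the sets $A_i := A\cap B(x_i,\delta D/2)$ cover $A$ and each has diameter at most $\delta D=\delta\diam A$, which is precisely the $(C,t)$-homogeneity condition, so $\dim_A E\leq t$.

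For the lower bound, assume $\mu(\R^n\setminus E)=0$ and (harmlessly) $\diam E>0$. Evaluating the lower density bound at any $x_0\in E$ at some fixed $r\in(0,2\diam E]$ yields $\mu(E)\geq\mu(B(x_0,r))\geq ar^{t}>0$. The upper density bound certifies $\limsup_{r\downarrow 0}r^{-t}\mu(B(x,r))\leq b$ at every $x\in E$, so the first item of Lemma \ref{l:bounds} with $\lambda=b$ gives $\Haus^{t}(E)\geq \mu(E)/b>0$. Hence $\dim_H E\geq t$, and therefore $\dim_A E\geq t$.

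No step is genuinely difficult; the main thing to watch is the bookkeeping that keeps every radius used in the volume comparison inside the admissible range $(0,2\diam E]$ where the density estimate applies. That is what dictates the choice of a $(\delta D/2)$-net and an enclosing ball of radius $2D$ rather than anything larger.
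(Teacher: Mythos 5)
Your upper-bound argument is correct and slightly more elementary than the paper's: you pick a maximal $(\delta D/2)$-separated net in $A$ and pack the pairwise-disjoint balls $B(x_i,\delta D/4)$ into the single ball $B(x_1,2D)$, whereas the paper first covers $A$ by all balls of diameter $\delta\diam A$ centered in $A$ and then invokes the Besicovitch covering theorem to extract $Q(n)$ disjoint subfamilies before making essentially the same measure comparison. Your version avoids Besicovitch entirely and yields the clean constant $C=(b/a)8^t$ rather than one involving the dimensional factor $Q(n)$; otherwise the two arguments are the same in spirit.

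Your lower-bound argument takes a genuinely different route. The paper stays inside Definition~\ref{def:assouad}: given any cover $\{A_1,\dots,A_k\}$ of $A=E\cap B(x,r)$ by subsets of $E$ of diameter at most $\delta\diam A$, it extracts a maximal $2\delta\diam A$-separated set $V\subseteq A$ (so $\card V\le k$), observes that the balls $B(v,4\delta r)$, $v\in V$, cover $A$, and compares $\mu(A)=\mu(B(x,r))\ge a r^t$ against $\card V\cdot b(4\delta r)^t$ to force $k\gtrsim\delta^{-t}$; thus $E$ is not $\beta$-homogeneous for any $\beta<t$. You instead establish $\Haus^t(E)>0$ via Lemma~\ref{l:bounds} and then import the standard inequality $\dim_A\ge\dim_H$. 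Both are valid; the paper's is self-contained within the Assouad framework, while yours leans on an external fact about Assouad versus Hausdorff dimension. One small gap in your version: Lemma~\ref{l:bounds} requires a \emph{finite} Borel measure, and if $\diam E=\infty$ (so that \eqref{eq:reg-Assouad} holds for all $r>0$) one may have $\mu(E)=\mu(\RR^n)=\infty$, so the lemma cannot be applied to $\mu$ directly. The fix is routine: apply Lemma~\ref{l:bounds} to $\nu=\mu\res B(x_0,r_0)$ for some $x_0\in E$ and $r_0\in(0,2\diam E]$; this $\nu$ is finite since $\nu(\RR^n)\le b r_0^t$, and one gets $\Haus^t\bigl(E\cap B(x_0,r_0)\bigr)\ge\nu\bigl(E\cap B(x_0,r_0)\bigr)/b=\mu(B(x_0,r_0))/b\ge a r_0^t/b>0$, which still gives $\dim_H E\ge t$. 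When $\diam E<\infty$ the hypothesis $\mu(\RR^n\setminus E)=0$ forces $\mu(\RR^n)\le b(2\diam E)^t<\infty$, so no restriction is needed and your argument is complete as written.
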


\begin{proof} Let $A\subseteq E$ be bounded and let $\d\in(0,1)$. Consider the cover $\mathcal{B}$ of $A$ by closed balls of diameter $\delta\diam A$ centered in $A$; that is, $$\mathcal{B}=\left\{B\left(x,\tfrac12\delta\diam A\right):x\in A\right\}.$$ By the Besicovitch covering theorem (see e.g.~\cite[Theorem 2.7]{Mattila}), there exist a positive integer $Q=Q(n)$ and disjoint subfamilies $\mathcal{B}_1,\dots,\mathcal{B}_Q$ of $\mathcal{B}$ such that $$\mathcal{B}\subseteq\bigcup_{i=1}^Q\mathcal{B}_i.$$ For each $1\leq i\leq Q$, we have $$\card\mathcal{B}_i\cdot a\left(\tfrac12\delta\diam A\right)^t \leq \sum_{B\in\mathcal{B}_i}\mu(B) = \mu\left(\bigcup\mathcal{B}_i\right)\leq \mu(B(x_i, 2\diam A)) \leq b (2\diam A)^t,$$ where $x_i$ denotes an arbitrarily chosen point in $A\cap \bigcup\mathcal{B}_i$. Hence $\card\mathcal{B}_i \leq \delta^{-t} 2^t(b/a)$ for all $1\leq i\leq Q$. Thus, $\mathcal{B'}=\bigcup_{i=1}^Q\mathcal{B}_i$ is a cover of $A$ by sets of diameter $\delta\diam A$ with $$\card\mathcal{B}' \leq C(n,t,a,b) \delta^{-t}.$$ We have shown the set $E$ is $(C,t)$-homogenous (see Definition \ref{def:assouad}), where $C=C(n,t,a,b)$. Therefore, the Assouad dimension of $E$ is at most $t$.

Suppose in addition to \eqref{eq:reg-Assouad} that $\mu(\R^n\setminus E) = 0$. Consider $A=E\cap B(x,r)$ for some fixed $x\in E$ and $0<r<\diam E$. Fix $\d\in(0,1)$ and let $\{A_1,\dots,A_k\}$ be any cover of $A$ with $A_i\subset E$ and $\diam{A_i} \leq \d\diam{A}$. Let $V$ be a maximal subset of $A$ such that $|v-v'|\geq 2\d\diam{A}$ for all distinct $v,v' \in A$. Cleary, $\card V \leq k$. By maximality of $V$, the collection $\{B(v,4\d r) : v\in V\}$ covers $A$, and thus,
\[ar^t \leq \mu(B(x,r)) = \mu(A)  \leq \sum_{v\in V}\mu(B(v,4\d r)) \leq \card V \cdot b 4^t r^t \d^t,\] where the equality holds since $\mu(\RR^n\setminus E)=0$.
In particular, $k\geq C'(n,t,a,b) \d^{-t}$. Because $\delta\in(0,1)$ was arbitrary, $E$ is not $\beta$-homogeneous for any $\beta <t$. Therefore, the Assouad dimension of $E$ is exactly $t$.
\end{proof}

\begin{corollary}\label{cor:1} Let $\mu$ be a Radon measure on $\RR^n$ and let $t\in[0,n]$. Then $$\mu^t_+:=\mu\res\left\{x\in\RR^n: 0<\liminf_{r\downarrow 0}\frac{\mu(B(x,r))}{r^t}\leq \limsup_{r\downarrow 0}\frac{\mu(B(x,r))}{r^t}<\infty\right\}$$ is carried by sets of Assouad dimension at most $t$.
\end{corollary}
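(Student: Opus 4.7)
The plan is to invoke Lemma \ref{lem:1}, which asserts that a set $E\subseteq\RR^n$ on which $\mu$ satisfies an Ahlfors-regular-type bound $a r^t \le \mu(B(x,r))\le b r^t$ up to radius $2\diam E$ has Assouad dimension at most $t$. Writing $A$ for the set appearing in the definition of $\mu^t_+$, I would produce a countable family of subsets $E_k\subseteq A$ covering $A$, on each of which the density bounds are genuinely uniform up to radius $2\diam E_k$. The main obstacle is that the condition defining $A$ is purely pointwise: at each $x\in A$ there are constants $a_x,b_x>0$ and a scale $r_x>0$ below which the density bounds hold, but neither the constants nor the scale is uniform on $A$, and there is no a priori link between $r_x$ and the diameter of a neighborhood of $x$ in $A$.

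To overcome this, I would use a two-step decomposition. First, for each pair of positive integers $(i,j)$, define
\[
A_{i,j}:=\Bigl\{x\in\RR^n:\tfrac{1}{j} r^t\leq \mu(B(x,r))\leq j r^t \text{ for all }0<r\leq 1/i\Bigr\}.
\]
Since every $x\in A$ eventually has $0<\liminf \mu(B(x,r))/r^t$ and $\limsup \mu(B(x,r))/r^t<\infty$, we get $A=\bigcup_{i,j\geq 1}A_{i,j}$; the constants $1/j$, $j$ and the cutoff scale $1/i$ are now uniform on each $A_{i,j}$. Second, to convert the fixed scale $1/i$ into the diameter hypothesis of Lemma \ref{lem:1}, I would intersect each $A_{i,j}$ with the cubes of the grid $\mathscr{G}_{\delta_i}$ of mesh $\delta_i=1/(2i\sqrt{n})$, so that every $Q\in\mathscr{G}_{\delta_i}$ has diameter at most $1/(2i)$.

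For each piece $E:=A_{i,j}\cap Q$ with $Q\in\mathscr{G}_{\delta_i}$, every $x\in E$ and every $0<r\leq 2\diam E\leq 1/i$ satisfies $\tfrac{1}{j} r^t\leq \mu(B(x,r))\leq j r^t$ directly from the definition of $A_{i,j}$. Lemma \ref{lem:1} then yields $\dim_A E\leq t$. The countable family $\{A_{i,j}\cap Q:i,j\geq 1,\ Q\in\mathscr{G}_{\delta_i}\}$ covers $A$, and hence carries $\mu^t_+=\mu\res A$. The only nontrivial step of the argument is the two-step stratification above, which converts the pointwise density hypothesis into a uniform one compatible with the diameter constraint in Lemma \ref{lem:1}; everything else is bookkeeping.
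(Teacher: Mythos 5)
Your argument is correct and is exactly the (unstated) proof the paper intends: stratify the pointwise density condition into sets $A_{i,j}$ with uniform constants and a uniform starting scale $1/i$, then intersect with a grid of mesh $1/(2i\sqrt n)$ so that each piece $A_{i,j}\cap Q$ has $2\diam(A_{i,j}\cap Q)\le 1/i$, meeting the diameter hypothesis of Lemma~\ref{lem:1}. The only unwritten corner case is a piece $A_{i,j}\cap Q$ that is empty or a single point, where the hypothesis of Lemma~\ref{lem:1} is vacuous, but such sets have Assouad dimension $0\le t$ anyway, so the conclusion is unaffected.
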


We are ready to prove Theorems \ref{thm:B} and \ref{thm:C}.

\begin{proof}[Proof of Theorem \ref{thm:B}] Let $\mu$ be a Radon measure on $\RR^n$ and let $t\in[0,1)$. By Corollary \ref{cor:1}, we can find countably many sets $E_i\subseteq\RR^n$ with $\dim_A{E_i}\leq t<1$ such that $$\mu^t_+\left(\RR^n\setminus\bigcup_i E_i\right)=0.$$ For each set $E_i$, there exists a bi-Lipschitz embedding $f_i:\RR\rightarrow\RR^n$ such that $E_i \subseteq f_i(\RR)$ by Corollary \ref{cor:bilip}. Hence $$\mu^t_+\left(\RR^n\setminus\bigcup_i\bigcup_{k\in\Z}f_i([k,k+1])\right)=\mu^t_+\left(\RR^n\setminus\bigcup_i f_i(\RR)\right)=0.$$ Therefore, $\mu^t_+$ is carried by bi-Lipschitz curves. \end{proof}

\begin{proof}[Proof of Theorem \ref{thm:C}] Repeat the proof of Theorem \ref{thm:B} \emph{mutatis mutandis}, using Theorem \ref{thm:param} in place of Corollary \ref{cor:bilip}.\end{proof}

\addtocontents{toc}{\protect\vspace{10pt}}

\appendix

\section{Decomposition of $\sigma$-finite measures}
\label{a:A}

The following definition encodes commonly used definitions of countably rectifiable and purely unrectifiable measures, including the variants in Definition \ref{def:s-rect}.

\begin{definition}\label{d:carry} Let $(\mathbb{X},\mathcal{M})$ be a measurable space, let $\mathcal{N}\subseteq\mathcal{M}$ be a nonempty collection of measurable sets, and let $\mu$ be a measure defined on $(\mathbb{X},\mathcal{M})$. We say that $\mu$ is \emph{carried by} $\mathcal{N}$ provided there exists a countable family $\{\Gamma_i:i\geq 1\}\subseteq\mathcal{N}$ of sets with $$\mu\left(\mathbb{X}\setminus\bigcup_{i=1}^\infty\Gamma_i\right)=0.$$ We say that $\mu$ is \emph{singular to} $\mathcal{N}$ if $\mu(\Gamma)=0$ for every $\Gamma\in\mathcal{N}$. \end{definition}

The ``correctness" of Definition \ref{d:carry} is partially justified by the following proposition, which should be considered a standard exercise in measure theory. The proof is a slight variation of \cite[Proposition 1.1]{BS3} (or \cite[Theorem 15.6]{Mattila}), which is specialized to the decomposition of Radon measures (sets) in $\RR^n$ into countably $m$-rectifiable and purely $m$-unrectifiable components. We present details for the convenience of the reader.

\begin{proposition}[Decomposition] \label{p:decomp} Let $(\mathbb{X},\mathcal{M})$ be a measurable space and let $\mathcal{N}\subseteq\mathcal{M}$ be a nonempty collection of sets. If $\mu$ is a $\sigma$-finite measure on $(\XX,\mathcal{M})$, then $\mu$ can be written uniquely as \begin{equation}\label{e:decomp}\mu=\mu_\mathcal{N}+\mu_\mathcal{N}^\perp,\end{equation} where $\mu_\mathcal{N}$ is a measure on $(\mathbb{X},\mathcal{M})$ that is carried by $\mathcal{N}$ and $\mu^\perp_\mathcal{N}$ is a measure on $(\mathbb{X},\mathcal{M})$ that is singular to $\mathcal{N}$.\end{proposition}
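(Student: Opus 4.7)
The plan is to reduce first to the case of a finite measure $\mu$, then obtain existence via a greedy maximization over countable unions of sets in $\mathcal{N}$, and finally deduce uniqueness from the maximality of the carrying set. Throughout, the hypothesis $\mathcal{N} \subseteq \mathcal{M}$ guarantees that countable unions of elements of $\mathcal{N}$ are measurable, so all the sets we work with lie in $\mathcal{M}$.

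For the reduction to finite $\mu$, I would write $\mathbb{X} = \bigsqcup_n X_n$ with $\mu(X_n) < \infty$ and apply the finite case to each restriction $\mu \res X_n = \nu_n + \nu_n^{\perp}$. Summing then gives $\mu_{\mathcal{N}} := \sum_n \nu_n$ and $\mu_{\mathcal{N}}^{\perp} := \sum_n \nu_n^{\perp}$: the former is carried by $\mathcal{N}$ because a countable union of countable carrying families is itself countable, and the latter is singular to $\mathcal{N}$ by countable additivity applied to each $\Gamma \in \mathcal{N}$. In the finite case, I would set
$$\alpha := \sup\left\{\mu(F) : F = \bigcup_{i=1}^{\infty} \Gamma_i,\ \Gamma_i \in \mathcal{N}\right\} \leq \mu(\mathbb{X}) < \infty,$$
choose a maximizing sequence of such countable unions $F_k$, and put $E := \bigcup_k F_k$, which is itself a countable union of sets in $\mathcal{N}$ and attains the supremum $\alpha$. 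Define $\mu_{\mathcal{N}} := \mu \res E$ and $\mu_{\mathcal{N}}^{\perp} := \mu \res (\mathbb{X} \setminus E)$. The first is tautologically carried by $\mathcal{N}$; for singularity of the second, any $\Gamma \in \mathcal{N}$ with $\mu_{\mathcal{N}}^{\perp}(\Gamma) = \mu(\Gamma \setminus E) > 0$ would make $E \cup \Gamma$ an admissible countable union of $\mu$-measure strictly exceeding $\alpha$, a contradiction.

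For uniqueness, given a second decomposition $\mu = \nu + \nu^{\perp}$ with $\nu$ carried by some countable union $E' = \bigcup_j \Gamma_j'$ and $\nu^{\perp}$ singular to $\mathcal{N}$, I would first observe that singularity combined with countable subadditivity forces $\nu^{\perp}(E) = \nu^{\perp}(E') = 0$, while maximality of $\alpha$ applied to the admissible union $E \cup E'$ yields $\mu(E \cup E') = \alpha$, whence $\mu(E' \setminus E) = 0$. Together these give, for every $A \in \mathcal{M}$,
$$\nu(A \setminus E) \leq \nu(E' \setminus E) \leq \mu(E' \setminus E) = 0 \quad\text{and}\quad \nu(A \cap E) = \mu(A \cap E) - \nu^{\perp}(A \cap E) = \mu(A \cap E),$$
so $\nu = \mu \res E$ and $\nu^{\perp} = \mu \res (\mathbb{X} \setminus E)$. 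The only delicate step is verifying that the supremum $\alpha$ is actually attained by a single countable union---not merely approached by a sequence of such unions---and this is where $\sigma$-finiteness is essential, as it is the mechanism that simultaneously drives both existence and uniqueness.
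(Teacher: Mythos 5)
Your argument is correct, but it takes a genuinely different route from the paper's. You reduce to finite $\mu$ by a disjoint partition and then exploit that the family of countable unions of sets in $\mathcal{N}$ is closed under countable unions: a maximizing sequence of such unions $F_k$ therefore has a union $E$ that is itself admissible, so the supremum $\alpha$ is actually attained, and singularity and uniqueness both fall out of maximality. The paper instead avoids attaining any supremum: it works with an increasing exhaustion $X_1 \subseteq X_2 \subseteq \cdots$, defines for each $j$ the quantity $M_j = \sup_{N} \mu(X_j \cap N)$ over \emph{finite} unions $N$ of sets in $\mathcal{N}$, picks $N_j$ with $\mu(X_j \cap N_j) > M_j - 1/j$, and sets $\mu_{\mathcal{N}} = \mu \res \bigcup_j N_j$; the singularity then comes from the $1/j$ margin and the nesting of the $X_j$, and uniqueness is argued by pushing a hypothetical discrepancy onto a common carrying set. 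Your version is arguably cleaner at the existence step since the extremal carrying set is exhibited outright, while the paper's approach handles the $\sigma$-finite case in one pass without an explicit finite-measure reduction. One small thing to tidy up: your uniqueness argument is stated purely in the finite-measure setting; the passage to the $\sigma$-finite case (restrict the competing decomposition $\nu + \nu^\perp$ to each $X_n$, observe that the restrictions inherit carried/singular, apply finite-case uniqueness on each piece, and sum) is routine but should be recorded. Also, a minor misattribution in your closing remark: attainment of $\alpha$ by a single countable union uses only finiteness of the restricted measure together with closure of admissible sets under countable unions; $\sigma$-finiteness is what licenses the reduction, not the attainment per se.
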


\begin{proof}Let $\widetilde{\mathcal{N}}$ denote the collection of finite unions of sets in $\mathcal{N}$.  Given a $\sigma$-finite measure $\mu$ on $(\mathbb{X},\mathcal{M})$, expand $\XX=\bigcup_{j=1}^\infty X_j$, where $$X_1\subseteq X_2\subseteq\cdots$$ is an increasing chain of sets in $\mathcal{M}$ with $\mu(X_j)<\infty$ for all $j\geq 1$. For each $j\geq 1$, define $$M_j := \sup_{N\in\widetilde{\mathcal{N}}} \mu(X_j\cap N)\leq \mu(X_j)<\infty.$$ By the approximation property of the supremum, we may choose a sequence $(N_j)_{j=1}^\infty$ of sets in $\widetilde{\mathcal{N}}$ such that $\mu(X_j\cap N_j)>M_j-1/j$ for all $j\geq 1$. Fix any such $(N_j)_{j=1}^\infty$ and define $$\mu_{\mathcal{N}}:=\mu\res \bigcup_{j=1}^\infty N_j\quad\text{and}\quad\mu^\perp_{\mathcal{N}}:= \mu\res \mathbb{X}\setminus \bigcup_{j=1}^\infty N_j.$$ Then $\mu_\mathcal{N}$ and $\mu^\perp_\mathcal{N}$ are measures on $(\mathbb{X},\mathcal{M})$ with $\mu=\mu_\mathcal{N}+\mu_\mathcal{N}^\perp$ and it is clear that $\mu_{\mathcal{N}}$ is carried by $\mathcal{N}$.

To see that $\mu^\perp_\mathcal{N}$ is singular to $\mathcal{N}$, assume for contradiction that $\mu^\perp_{\mathcal{N}}(S)>0$ for some $S\in\mathcal{N}$. First pick an index $j_0$ such that $\mu(X_{j_0}\cap S)>0$. Next, pick $j\geq j_0$ sufficiently large such that $\mu(X_{j_0}\cap S)>1/j$. Note that $T:=N_j\cup S\in\widetilde{\mathcal{N}}$, since $N_j\in\widetilde{\mathcal{N}}$ and $S\in\mathcal{N}$. It follows that $$M_j\geq \mu(X_j\cap T) \geq \mu_\mathcal{N}(X_j\cap N_j) + \mu^\perp_\mathcal{N}(X_j\cap S)>(M_j-1/j)+1/j=M_j,$$ where in the last inequality we used the fact that $X_{j_0}\subseteq X_j$. We have a reached a contradiction. Therefore, $\mu^\perp_\mathcal{N}$ is singular to $\mathcal{N}$.

Next we want to show that the decomposition of $\mu$ as the sum of a measure that is carried by $\mathcal{N}$ and a measure that is singular to $\mathcal{N}$ is unique.
Suppose that $\mu=\mu_c+\mu_s$, where $\mu_c$ and $\mu_s$ are measures such that $\mu_c$ is carried by $\mathcal{N}$ and $\mu_s$ is singular to $\mathcal{N}$. To show that $\mu_c=\mu_\mathcal{N}$ and $\mu_s=\mu^\perp_\mathcal{N}$, it suffices to prove the former. Suppose for contradiction that $\mu_c(A)<\mu_\mathcal{N}(A)$ for some $A\in\mathcal{M}$. Replacing $A$ with $A\cap X_j$ for $j$ sufficiently large, we may assume without loss of generality that $\mu_{\mathcal{N}}(A)<\infty$. Since $\mu_c$ and $\mu_\mathcal{N}$ are both carried by $\mathcal{N}$, we can find a set $N$, which is a countable union of sets in $\mathcal{N}$ such that $$\mu_c(A\cap N)=\mu_c(A)<\mu_\mathcal{N}(A)=\mu_\mathcal{N}(A\cap N).$$ Then $\mu_s(A\cap N)=\mu(A\cap N)-\mu_c(A\cap N) > \mu(A\cap N)-\mu_\mathcal{N}(A\cap N)=\mu^\perp_\mathcal{N}(A\cap N)=0.$ This contradicts that $\mu_s$ is singular to $\mathcal{N}$. Therefore, $\mu_c=\mu_\mathcal{N}$, and thus, $\mu_s=\mu^\perp_\mathcal{N}$.
\end{proof}

\begin{example} Let $\mu$ and $\nu$ be measures on a measurable space $(\mathbb{X},\mathcal{M})$, and let $$\mathcal{N}:=\{A\in\mathcal{M}:\nu(A)=0\}$$ denote the null sets of $\nu$. If $\mu$ is $\sigma$-finite, then by Proposition \ref{p:decomp}, the measure $\mu$ can be uniquely expanded $\mu=\mu_\mathcal{N}+\mu_{\mathcal{N}}^\perp$, where $\mu_{\mathcal{N}}$ is carried by null sets of $\nu$ and $\mu_{\mathcal{N}}^\perp$ is singular to null sets of $\nu$. Thus, writing $\mu_s:=\mu_{\mathcal{N}}$ and $\mu_{ac}:=\mu_{\mathcal{N}}^\perp$, we can decompose $\mu=\mu_{s}+\mu_{ac}$, where $\mu_{s}\perp \nu$ and $\mu_{ac}\ll\nu$.  \end{example}

\bibliography{mcurves}{}
\bibliographystyle{amsbeta}
\end{document}